\newtheorem{theorem}{\bf Theorem}[section]
\newtheorem{lemma}[theorem]{\bf Lemma}
\newtheorem{prop}[theorem]{\bf Proposition}
\newtheorem{defn}{\bf Definition}[section]
\newtheorem{remark}{{\bf Remark}}[section]
\newenvironment{proof}{\noindent{\em Proof.}}{\quad \hfill$\Box$\vspace{2ex}}
\def \bZ {\Bbb Z}
\def \bR {\Bbb R}
\def \and {\, \mbox{\rm and}\, }
\def \supp {\,{\rm supp}\,}
\def \Re {\,{\rm Re}\,}
\def \Im {\,{\rm Im}\,}
\def \l {\left}
\def \r {\right}
\def \sup {\,{\rm sup}\,}
\newcommand{\Rmnum}[1]{\expandafter\@slowromancap\romannumeral #1@}
\begin{document}
\title{\bf Uniform Estimates for Oscillatory Integral Operators with Polynomial Phases}
\author{Zuoshunhua Shi \thanks{~School of Mathematics and Statistics,~Central South University, Changsha Hunan 410083, P.R. China. E-mail address: {\it shizsh@163.com}.}}
\date{}
\maketitle{}
\begin{abstract}
In this paper, we shall establish uniform sharp $L^p$ decay estimates for oscillatory integral operators with weighted homogeneous polynomial phases. By this one-dimensional result, we can use the rotation method to obtain uniform sharp $L^p$ estimates of certain higher-dimensional oscillatory integral operators.
\end{abstract}
{\bf{Keywords}} Oscillatory integral operator, van der Corput's Lemma, Uniform estimate, Weighted homogeneous polynomial\\
{\bf{Mathematics Subject Classification}} 47G10, 44A05.
\section{Introduction}
In this paper, we mainly consider the stability of certain oscillatory integral operators. The issue of stability for oscillatory integrals includes two major cases: (i) stable estimates under a small perturbation of a given function; see Karpushkin \cite{Karpushkin86}, Phong-Stein-Sturm \cite{PSS1999}, Phong-Sturm \cite{PSturm2000}, Ikromov-Kempe-M\"{u}ller \cite{IKM1}, Ikromov-M\"{u}ller \cite{IM1} and Greenblatt \cite{greenblatt2011,greenblatt2014,greenblatt2016}; (ii) uniform estimates over a large class of phases satisfying certain nondegeneracy conditions; see Carbery-Christ-Wright \cite{carbery}, Carbery-Wright \cite{CaW}, Christ-Li-Tao-Thiele \cite{Christ-Li-Tao-Thiele}, Phong-Stein-Sturm \cite{PSS2001}, Greenblatt \cite{greenblatt3} and Gressman \cite{gressman}. The second case of stability will be investigated for certain oscillatory integral operators with polynomial phases.

Consider oscillatory integral operators of the following form:
\begin{equation}\label{General OIO}
T_{\lambda}f(x)=\int_{-\infty}^{\infty}e^{i\lambda S(x,y)}\varphi(x,y)f(y)dy,
\end{equation}
where $\lambda$ is a real parameter and $\varphi$ is a smooth cut-off function. For nondegenerate phases $S$, H\"{o}rmander \cite{hormander2} established the optimal $L^2$ decay estimate; see also Phong-Stein \cite{PS1997} and Phong-Stein-Sturm \cite{PSS2001} for uniform $L^2$ decay estimates. For general degenerate real-analytic phases $S$, the remarkable theorem in Phong-Stein \cite {PS1997} established the relation between the decay rate of the $L^2$ operator norm of $T_{\lambda}$ and the Newton distance of the phase $S$; see also Varchenko \cite{varchenko} for pioneering work on the relation between Newton distance and the optimal decay exponent of oscillatory integrals, and an illuminating simple proof of Phong-Stein Theorem in \cite {PS1997} due to Greeblatt \cite{greenblatt1}. Rychkov \cite{rychkov} extended this result to most smooth phases and full generalizations to
smooth phases were established by Greenblatt \cite{greenblatt2}. For higher dimensional analogues, Tang \cite{tangwan} and Greenleaf-Pramanik-Tang \cite{GPT} obtained sharp $L^2$ decay estimates under certain genericity assumptions. Recently, Xiao \cite{Xiao2017} proved the full range of sharp $L^p$ decay estimate for $T_{\lambda}$; see also \cite{yangchanwoo,yangchanwoo2,ShiYan,ShiXuYan} for some related work. We also refer the reader to Greenleaf-Seeger \cite{greenleafseeger2} for a comprehensive survey on degenerate oscillatory and Fourier integral operators.

By imposing uniform positive lower bounds on certain mixed derivatives of the phase $S$, Carbery-Christ-Wright \cite{carbery} established uniform sharp growth estimates for sublevel set operators associated with $S$. In \cite{carbery}, the authors also obtained uniform decay estimates for oscillatory integral operators with non-sharp decay exponents except for some especial cases; see also \cite{CaW}. Up to logarithmic terms, Phong-Stein-Sturm \cite{PSS2001} obtained uniform sharp $L^p$ estimates for a class of multilinear sublevel set operators and oscillatory integral operators. In this paper, we shall remove the logarithmic terms in these estimates of \cite{PSS2001} for $T_{\lambda}$. The phase $S$ is a weighted homogeneous polynomial in $\bR^2$ (see Definition \ref{sec2 def of weighted homo poly} below) which can be written as the following form:
\begin{equation}\label{Definition Polynomial phase}
S(x,y)=\sum_{k\eta+l=d}a_{k,l}x^ky^l,
\end{equation}
where $\eta,d>0$ are two fixed numbers. It is clear that $S$ has nonvanishing partial derivatives $\partial^k_x\partial^l_y S$ for positive integers $k,l$ with $a_{k,l}\neq 0$.

Now we state our main result in this paper.
\begin{theorem}\label{Theorem Main result uniform Lp decay}
Assume $S$ is a real-valued polynomial of form {\rm(\ref{Definition Polynomial phase})} in $\bR^2$. Let $T_{\lambda}$ be an oscillatory integral operator as in {\rm(\ref{General OIO})}. Then there exists a constant $C$, depending only on the cut-off $\varphi$ and the degree $deg(S)$ of phase $S$, such that for all positive integers $k,l$ satisfying $k\eta+l=d$,
\begin{equation}\label{Main Lp decay estimate}
\|T_{\lambda}f\|_{L^p}
\leq C |a_{k,l}|^{-\frac{1}{k+l}}
|\lambda|^{-\frac{1}{k+l}} \|f\|_{L^p},~~~p=\frac{k+l}{k}.
\end{equation}
\end{theorem}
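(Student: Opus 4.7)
The starting point is the algebraic observation that the weighted-homogeneity constraint $k\eta+l=d$ forbids any monomial in $S$ other than $x^{k_0}y^{l_0}$ itself from simultaneously satisfying $k\ge k_0$ and $l\ge l_0$. Consequently
$$\partial_x^{k_0}\partial_y^{l_0}S(x,y)=k_0!\,l_0!\,a_{k_0,l_0}$$
is a nonzero constant on $\bR^2$. This is exactly the uniform lower-bound hypothesis of the Phong--Stein--Sturm estimate in \cite{PSS2001}, and the task therefore reduces to upgrading that bound by removing its logarithmic factor under the additional weighted-homogeneity assumption.

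The main device is a weighted dyadic decomposition matched to the scaling $S(t^\eta x,ty)=t^dS(x,y)$. I would fix a smooth partition of unity $\{\varphi_j\}_{j\ge 0}$ subordinate to the weighted annuli $\{(x,y):|x|^{1/\eta}+|y|\sim 2^{-j}\}$ and split $T_\lambda=\sum_j T_\lambda^{(j)}$ accordingly. After the substitution $x=2^{-\eta j}X$, $y=2^{-j}Y$, each $T_\lambda^{(j)}$ becomes a unit-scale operator $\widetilde T_{\mu_j}$ with the same polynomial phase $S$ and effective parameter $\mu_j:=\lambda\,2^{-jd}$. Tracking the $L^p$-Jacobians, one checks that the scaling exponent is
$$\frac{p+\eta-1}{p}=\frac{k_0\eta+l_0}{k_0+l_0}=\frac{d}{k_0+l_0},$$
which exactly cancels the factor $|\mu_j|^{-1/(k_0+l_0)}=(|\lambda|\,2^{-jd})^{-1/(k_0+l_0)}$ appearing in any unit-scale decay bound. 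Thus each shell should contribute $|\lambda a_{k_0,l_0}|^{-1/(k_0+l_0)}$, independently of $j$.

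To assemble the shells without incurring a factor of $\log|\lambda|$, I would arrange the decomposition so that the $y$-supports of the inputs and (after an auxiliary $x$-cutoff) the $x$-supports of the outputs $T_\lambda^{(j)}f$ are essentially disjoint across $j$. Exact $L^p$-orthogonality then yields
$$\|T_\lambda f\|_{L^p}^p\;\lesssim\;\sum_j\|T_\lambda^{(j)}f\|_{L^p}^p\;\lesssim\;\sup_j\|T_\lambda^{(j)}\|_{L^p\to L^p}^p\,\|f\|_{L^p}^p,$$
so that only a uniform-in-$j$ per-shell bound is required; the typical $O(\log|\lambda|)$ sum over dyadic scales is avoided.

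The principal remaining obstacle is producing the per-shell bound \emph{without} any logarithmic factor, uniformly in the remaining coefficients of $S$. On the balanced piece $|X|^{1/\eta}\sim|Y|$ of the unit region, a direct $TT^{\ast}$/van der Corput argument exploiting the constant mixed derivative of $\mu_j S$ should suffice. The harder case is the off-balanced region, where a pure-power monomial in $X$ or $Y$ dominates the phase; there one must integrate by parts in the dominant variable and combine with a Phong--Stein-type bound for the resulting monomial model, checking that all constants are uniform even when some ratios $a_{k,l}/a_{k_0,l_0}$ are large. Securing this uniform log-free unit-scale estimate is where the technical core of the theorem lies.
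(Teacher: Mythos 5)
Your route differs genuinely from the paper's. The paper factors the Hessian over $\mathbb{C}$ as $S''_{xy}=c_0x^my^n\prod_i(x-\alpha_iy^{\eta})$, builds a damping factor $D$ out of a subset of those factors, proves uniform $L^2$ decay for the damped operators $W_z$ at $\Re(z)=\beta>0$ (Theorem~\ref{Theorem L2 damping estimate} / Theorem~\ref{Theorem L2 damping esti revision}) and $L^{1}/L^{1,\infty}/H^1_E\!\to\!L^1$ estimates at $\Re(z)=-1/(m+s)$ (Theorem~\ref{Theorem main HE1-L1 estimate}), and then reaches $\Re(z)=0$ by analytic interpolation combined with a Stein--Weiss change-of-weights argument (Lemma~\ref{Lemma interpolaiton with change of weights}). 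That interpolation over the damping exponent is exactly what removes the logarithm in the PSS bound and is what makes the constant uniform over the coefficients $a_{k,l}$. None of that machinery appears in your plan.

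As written, your plan has three concrete gaps. First, on the ``balanced'' shell $|X|^{1/\eta}\sim|Y|\sim 1$ the $TT^{\ast}$/van der Corput argument requires a lower bound on $\partial_x\partial_y S$, not on the higher mixed derivative $\partial_x^{k_0}\partial_y^{l_0}S$. The Hessian vanishes along the real branches of $\prod_i(x-\alpha_iy^{\eta})=0$, which can pass through the balanced shell, so the direct argument is unavailable; the constancy of $\partial_x^{k_0}\partial_y^{l_0}S$ only feeds the Phong--Stein--Sturm sublevel machinery, which carries the logarithm you are trying to remove. Second, the proposed shell orthogonality fails: the $y$-shadows of the weighted annuli $\{|x|^{1/\eta}+|y|\sim 2^{-j}\}$ are nested intervals $\{|y|\lesssim 2^{-j}\}$, not essentially disjoint, so the $L^p$-orthogonality step does not go through as stated. (The paper instead uses the full two-parameter decomposition $|x|\sim 2^j$, $|y|\sim 2^k$ and establishes Cotlar--Stein almost-orthogonality via Lemma~\ref{Lemma almost orth van der Corput}.) Third — and you acknowledge this — the ``principal remaining obstacle,'' a per-shell decay bound with no log and a constant uniform over the remaining coefficient ratios $a_{k,l}/a_{k_0,l_0}$, \emph{is} the theorem: after rescaling, the unit-scale problem has the same polynomial, the same coefficients, and the same vanishing set for $S''_{xy}$, so the scaling reduction does not by itself make progress on the core uniformity issue.
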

Our proof of the theorem relies on some uniform damping estimates related to $T_{\lambda}$. Roughly speaking, one of the damping estimates is a uniform $L^2$ decay rate for oscillatory integral operators with damping factors related to the Hessian $S_{xy}''$. Another damping estimate consists of uniform $L^1\rightarrow L^{1,\infty}$, $H_E^1\rightarrow L^1$ and $L^1\rightarrow L^1$ boundedness for certain damped oscillatory integral operators. Here $H_E^1$ is a variant of the Hardy space $H^1$ associated with the phase $S$; see Phong-Stein \cite{PS1986}, Pan \cite{pan}, Greenleaf-Seeger \cite{greenleafseeger1} and Shi-Yan \cite{ShiYan}.

In this paper, the damped oscillatory integral operators with polynomial phases are of the form
\begin{equation}\label{Damped OIO}
W_{z}f(x)=\int_{\bR} e^{i\lambda S(x,y)} |D(x,y)|^z \varphi(x,y) f(y) dy,
\end{equation}
where $D$ is the damping factor and $z$ is the damping exponent. When the damping factor $D$ is taken as the Hessian $S''_{xy}$, Phong-Stein \cite{PS1998} proved the sharp $L^2$ decay estimate for $W_{\lambda}$ with the operator norm depending on upper bounds for $S''_{xy}$ together with its higher derivatives; see Seeger \cite{seeger1,seeger2} for decay $O(|\lambda|^{-1/2})$ with $\Re(z)>1/2$. As a consequence of our uniform $L^2$ damping estimates, we are able to establish the stability of the result in Phong-Stein \cite{PS1997} for $W_{z}$ with polynomial phases of the form (\ref{Definition Polynomial phase}). More precisely, we have the following theorem.
\begin{theorem}\label{Uniform Damped OIO}
Assume $S$ is a real-valued polynomial of the form {\rm(\ref{Definition Polynomial phase})}. Let $W_{z}$ be defined as in {\rm(\ref{Damped OIO})} with $D(x,y)=S''_{xy}(x,y)$. Then there exists a constant $C=C(deg(S))$, depending only on the degree of $S$, such that
\begin{equation*}
\|W_{z}f\|_{L^2}\leq C (1+|z|)^2
M |\lambda|^{-1/2}\|f\|_{L^2}
\end{equation*}
for all $z\in\mathbb{C}$ with $\Re(z)=\frac{1}{2}$ and all $\varphi\in C_0^{\infty}(\bR^2)$ satisfying
\begin{equation*}
\mathop{\sup}_{\Omega}
\sum_{k=0}^{2}
\left( \big(\delta^{}_{\Omega,h}(x) \big)^k |\partial^k_y\varphi(x,y)|
+
\big( \delta^{}_{\Omega,v}(y) \big)^k |\partial^k_x\varphi(x,y)|\right)
\leq M,
\end{equation*}
where $M$ is a positive number and $\Omega$ is a horizontally and vertically convex domain such that the cut-off $\varphi$ is supported in $\Omega$. Here $\delta_{\Omega,h}(x)$ and $\delta_{\Omega,v}(y)$ denote the length of the cross sections $\{y:(x,y)\in\Omega\}$ and $\{x:(x,y)\in\Omega\}$, respectively.
\end{theorem}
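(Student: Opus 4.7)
The plan is to adapt the $T^*T$ argument of Phong--Stein \cite{PS1998}, tracking all constants so that they depend only on $\deg(S)$ and the cut-off quantity $M$, not on pointwise bounds of higher derivatives of $S''_{xy}$. Writing $\|W_z f\|_{L^2}^2 = \langle W_z^* W_z f,\, f\rangle$, the kernel of $W_z^* W_z$ is
$$K(y,y') = \int e^{i\lambda[S(x,y)-S(x,y')]}\,|S''_{xy}(x,y)|^z\,\overline{|S''_{xy}(x,y')|^z}\,\varphi(x,y)\overline{\varphi(x,y')}\,dx,$$
and since $\Re(z)=\tfrac12$ we have $\bigl||S''_{xy}|^z\bigr|=|S''_{xy}|^{1/2}$. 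By Schur's lemma it suffices to show
$$\sup_{y}\int |K(y,y')|\,dy' \leq C\,(1+|z|)^4\,M^2\,|\lambda|^{-1},$$
with $C$ depending only on $\deg(S)$.

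Second, I would estimate $K(y,y')$ by integration by parts in $x$, using
$$\partial_x\bigl[S(x,y)-S(x,y')\bigr] = (y-y')\,H(x),\qquad H(x)=\int_0^1 S''_{xy}\bigl(x,(1-t)y'+ty\bigr)\,dt.$$
Each integration by parts gains a factor of $|\lambda(y-y')\,H(x)|^{-1}$ at the cost of differentiating either $|S''_{xy}(x,\cdot)|^z$ (producing an $O(1+|z|)$), $\varphi(x,\cdot)$ (controlled by $M$ after multiplying by $\delta_{\Omega,v}$), or $1/H(x)$. Splitting dyadically in the quantity $|\lambda(y-y')H(x)|$ and applying two integrations by parts in the high-frequency region produces the decay $|\lambda|^{-1}$ after integration in $y'$, the factor $M^2$ from the cut-off hypothesis, and the requisite power of $(1+|z|)$ from differentiating the damping factors.

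The key obstacle is the uniform pointwise comparison
$$\frac{|S''_{xy}(x,y)\,S''_{xy}(x,y')|^{1/2}}{|H(x)|} \leq C\bigl(\deg(S)\bigr)$$
outside a controlled exceptional set, which is what allows the damping to absorb the singular denominator $H$ generated by integration by parts. The weighted homogeneous polynomial structure is decisive here: for each fixed $x$, the map $t\mapsto S''_{xy}(x,(1-t)y'+ty)$ is a polynomial in $t$ of degree at most $\deg(S)-2$, so the set where the comparison fails consists of $O(\deg(S))$ subintervals on which $H$ nearly vanishes and, consequently, $|S''_{xy}(x,y)|$ or $|S''_{xy}(x,y')|$ is itself small by a Lojasiewicz-type polynomial inequality with constants depending only on the degree. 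On those intervals one replaces integration by parts by the trivial estimate
$$|K(y,y')| \leq \int |S''_{xy}(x,y)\,S''_{xy}(x,y')|^{1/2}\,|\varphi(x,y)\overline{\varphi(x,y')}|\,dx,$$
whose integrability in $y'$ follows from the same polynomial structure. Assembling the two regimes via Schur's test yields the theorem; the technical heart of the argument is thus the uniform polynomial comparison lemma, while the rest is a routine, if delicate, integration by parts.
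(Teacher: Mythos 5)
Your proposal hinges on the comparison lemma
\begin{equation*}
\frac{|S''_{xy}(x,y)\,S''_{xy}(x,y')|^{1/2}}{|H(x)|}\leq C(\deg S)
\end{equation*}
outside an exceptional $x$-set, justified by the claim that if $H(x)=\int_0^1 S''_{xy}(x,(1-t)y'+ty)\,dt$ nearly vanishes then one of $|S''_{xy}(x,y)|$, $|S''_{xy}(x,y')|$ must be small. That implication is false, and it fails already for the simplest nontrivial weighted homogeneous phase. Take $S(x,y)=\tfrac12 x^2 y-\tfrac{\alpha}{2}xy^2$, so that $S''_{xy}(x,y)=x-\alpha y$. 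Then $H(x)=x-\alpha\frac{y+y'}{2}$, which vanishes at $x_0=\alpha\frac{y+y'}{2}$, while at that point $S''_{xy}(x_0,y)=-\alpha\frac{y-y'}{2}$ and $S''_{xy}(x_0,y')=\alpha\frac{y-y'}{2}$ are both of size $\approx \alpha|y-y'|$. The ratio in your comparison is unbounded, and nothing forces the damping factor to vanish where the phase mean $H$ does. The trivial estimate you propose on the exceptional set carries no power of $\lambda$ either, so this regime is not salvaged by sublevel-set considerations alone. This is precisely why the $T^*T$/Schur argument of Phong--Stein requires, as an \emph{input} hypothesis, that $|S''_{xy}|$ be comparable to a constant on the support: in that regime $S''_{xy}$ has a fixed sign, $H(x)$ is automatically comparable to that constant, and your comparison becomes trivially true.

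The paper's actual route makes this comparability a theorem rather than an assumption. It factors the weighted homogeneous Hessian as $S''_{xy}(x,y)=c_0 x^m y^n\prod_i(x-\alpha_i y^\eta)$ (Lemma \ref{sec2 factorization Rxy}), performs a dyadic decomposition in $x$, in $y$, and when necessary around the root curves $x=\alpha_i y^\eta$ (Cases (i)--(iii), Subcases (a)--(b) in the proof of Theorem \ref{Theorem L2 damping estimate}), thereby producing curved trapezoids on which $|S''_{xy}|$ is comparable to a fixed size. Only then does it invoke the operator van der Corput lemma (Lemma \ref{operator van der Corput}) and the oscillation estimate (Lemma \ref{Lemma almost orth van der Corput}), whose hypothesis (i) is exactly the two-sided bound $\mu\leq |S''_{xy}|\leq A\mu$. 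The almost orthogonality principle (Lemma \ref{simple almost orthogonality principle}) then assembles the pieces, with convex combinations of oscillation and size estimates supplying both the $|\lambda|^{-1/2}$ decay and the uniformity in the root sizes $|\alpha_i|$. To repair your proposal you would essentially have to reconstruct this decomposition; the global $T^*T$ bound cannot be closed without it.
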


With the rotation method and a variant of Stein-Weiss interpolation with change of measures, we can apply Theorem \ref{Theorem Main result uniform Lp decay} to obtain some uniform $L^p$ estimates for higher dimensional oscillatory integral operators.

For an operator $W$ on $L^2(\bR)$, $\|W\|$ denotes its operator norm. If $G$ is a finite set, the notation $|G|$ means the number of elements in $G$. For two real numbers $a,b\geq 0$, $a\lesssim b$ implies that there exists an appropriate constant $C$ such that $a\leq Cb$, and $a\approx b$ means $a\lesssim b$ and $b\lesssim a$. For a general set $A$, we use $\chi_{A}$ to denote its characteristic function.

\section{Preliminaries}
In this section, we shall first present some basic notions concerning horizontally and vertically convex domains. With these useful notions, uniform $L^2$ estimates will be established for non-degenerate oscillatory integral operators which are supported in horizontally (vertically) convex domains. These domains with special geometric convexity were implicit in the work of Carbery-Christ-Wright \cite{carbery}, and defined explicitly by Phong-Stein-Sturum \cite{PSS2001}. These uniform $L^2$ decay estimates are often known as the operator version of van der Corput lemma; see \cite{PS1997,PSS2001}. Finally, we also include an interpolation result with change of power weights.

\begin{defn}
Let $\Omega\subseteq\bR^2$. We say that $\Omega$ is horizontally convex if $(x,z),(y,z)\in \Omega$ imply $(u,z)\in \Omega$ for all $x\leq u \leq y$. Similarly, $\Omega$ is said to be vertically convex if $(x,y),(x,z)\in\Omega$ imply $(x,v)\in\Omega$ for all $y\leq v \leq z$.
\end{defn}

We shall give a simple relation between horizontal convexity and a curved trapezoid.
\begin{defn}\label{sec2 curved trapezoid}
Assume $g$ and $h$ are two monotone functions on $[a,b]$ with $g\leq h$. Then we call
\begin{equation*}
\Omega=\{(x,y)\mid a\leq x\leq b,~g(x)\leq y \leq h(x)\}
\end{equation*}
a curved trapezoid.
\end{defn}
It is clear that a curved trapezoid is vertically convex. By the monotonicity of $g,h$, one can verify that a curved trapezoid is also horizontally convex. If the monotonicity assumption on $g,h$ is dropped, the horizontal convexity for $\Omega$ is not generally true. But we have the following result.

\begin{lemma}
Assume $g,h\in C[a,b]$ satisfy $g\leq h$ on $[a,b]$. If the set $\Omega$ in Definition {\rm \ref{sec2 curved trapezoid}} is horizontally convex, then $[a,b]$ can be divided into three intervals $I_1,I_2,I_3$ with disjoint interiors such that each domain
$$\Omega_i=\{(x,y)\mid x\in I_i,~g(x)\leq y \leq h(x)\}$$
is a curved trapezoid.
\end{lemma}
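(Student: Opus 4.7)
The plan is to deduce from the horizontal convexity of $\Omega$ together with $g\leq h$ that $h$ is quasi-concave and $g$ is quasi-convex on $[a,b]$. Since $g,h$ are continuous, such unimodality will provide points $p_g,p_h\in[a,b]$ where $g$ attains its minimum and $h$ its maximum, with $g$ monotone non-increasing on $[a,p_g]$ and non-decreasing on $[p_g,b]$, and dually $h$ non-decreasing on $[a,p_h]$ and non-increasing on $[p_h,b]$. Partitioning $[a,b]$ at the two points $\min(p_g,p_h)$ and $\max(p_g,p_h)$ then produces three subintervals $I_1,I_2,I_3$ on each of which both $g$ and $h$ are simultaneously monotone, so each $\Omega_i$ fits Definition \ref{sec2 curved trapezoid}.

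The key step is showing quasi-concavity of $h$; the argument for $g$ will be symmetric. Arguing by contradiction, suppose there exist $x_1<x_2<x_3$ in $[a,b]$ with $h(x_2)<\min(h(x_1),h(x_3))$, and choose $y$ strictly between $h(x_2)$ and $\min(h(x_1),h(x_3))$. The superlevel set $\{x\in[a,b]:h(x)\geq y\}$ is closed and omits $x_2$, so it has a connected component $C_1=[\alpha_1,\beta_1]$ containing $x_1$ with $\beta_1<x_2$, and a separate component $C_2=[\alpha_2,\beta_2]$ containing $x_3$ with $\alpha_2>x_2$. Because $\beta_1$ and $\alpha_2$ both lie in the interior of $[a,b]$, the continuity of $h$ forces $h(\beta_1)=h(\alpha_2)=y$.

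Now horizontal convexity of $\Omega$ says the intersection $\{g\leq y\}\cap\{h\geq y\}$ must be an interval, yet it is contained in the separated union $C_1\cup C_2$. Hence at least one of these components is entirely disjoint from $\{g\leq y\}$, so that $g>y$ everywhere on it. Specializing to the corresponding level-crossing point of $h$ yields either $g(\beta_1)>y=h(\beta_1)$ or $g(\alpha_2)>y=h(\alpha_2)$, contradicting $g\leq h$. This proves $h$ is quasi-concave; reversing the roles of $g$ and $h$ (i.e., applying the same reasoning to sublevel sets of $g$ around a would-be local maximum of $g$, and using $g\leq h$ at the boundary of such a component) yields quasi-convexity of $g$. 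The delicate point, which I see as the main obstacle, is precisely this contradiction: one must choose the level $y$ so that the boundary points of the offending component fall strictly inside $[a,b]$, which is exactly where continuity converts $g\leq h$ into a contradiction with $g>y=h$. Once both unimodality properties are in hand, the three-interval partition described in the first paragraph is immediate.
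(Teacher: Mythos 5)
Your proof is correct, and it reaches the same unimodality conclusion as the paper (namely that $h$ increases to its maximum and then decreases, $g$ decreases to its minimum and then increases, and one cuts at the two extremal abscissae). However, your route to that conclusion is genuinely different from the paper's. The paper's argument is a direct three-point application of the definition: if $a\leq x_1<x_2\leq c$ (with $c=\arg\max h$) and $h(x_1)>h(x_2)$, continuity produces $x_3\in[x_2,c]$ with $h(x_3)=h(x_1)$, and then $(x_1,h(x_1))$ and $(x_3,h(x_1))$ lie in $\Omega$ while $(x_2,h(x_1))$ does not (because $h(x_1)>h(x_2)$), contradicting horizontal convexity directly. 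Your argument instead works with superlevel sets of $h$ (respectively sublevel sets of $g$) at an intermediate height $y$, uses that the horizontal cross-section of $\Omega$ at $y$ is an interval, concludes that it must miss one of the two components $C_1,C_2$ around $x_2$, and derives the contradiction from $g\leq h$ at a level-crossing boundary point. Both are fine; the paper's is shorter and closer to the raw definition, while yours gives a cleaner structural picture (horizontal convexity of $\Omega$ together with $g\leq h$ forces $h$ quasi-concave and $g$ quasi-convex), which is perhaps more transparent as to why the partition into exactly three curved trapezoids suffices. One minor imprecision worth fixing: the cross-section $\{g\leq y\}\cap\{h\geq y\}$ is \emph{not} necessarily contained in $C_1\cup C_2$, since $\{h\geq y\}$ may have additional components. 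What you actually need (and what is true) is that the cross-section is an interval not containing $x_2$, while $C_1\subset[a,x_2)$ and $C_2\subset(x_2,b]$; hence the interval cannot meet both $C_1$ and $C_2$, and your conclusion that at least one of them is disjoint from $\{g\leq y\}$ follows from that, not from a false containment.
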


\begin{proof}
Choose a point $c\in [a,b]$ such that $h$ achieves its maximum at $c$. Then $h$ must be increasing on $[a,c]$ and decreasing on $[c,d]$. Otherwise, we can choose two points $a\leq x_1<x_2 \leq c$ such that $h(x_1)>h(x_2)$. By the continuity of $h$, we can choose $x_3\in [x_2,c]$ with $h(x_3)=h(x_1)$. Since $\Omega$ is horizontally convex, we find that $(u,h(x_1))\in\Omega$ for all $x_1\leq u \leq x_3$. But this contradicts the fact $(x_2,h(x_1))\notin\Omega$. Thus $h$ does not decrease on $[a,c]$. By a similar argument, we can prove that $h$ is decreasing on $[c,b]$.

Assume $g$ attains its minimum at some point $d\in [a,b]$. By the same argument as above, it follows that $g$ is decreasing on $[a,d]$ and is increasing on $[d,b]$.

Without loss of generality, assume $c\leq d$. Then $I_i$ are obtained by taking $a,c,d,b$ as the endpoints of these intervals. The proof is therefore complete.
\end{proof}

For horizontally and vertically convex domains, some notations will be used frequently in this paper.\\
$~~$$\bullet$ $\delta_{\Omega,h}(x)$:~~the length of the cross section $\{y:(x,y)\in\Omega\}$. Here the subscript $h$ means that $\delta_{\Omega,h}$ is a function of the horizontal variable.\\
$~~$$\bullet$ $\delta_{\Omega,v}(y)$:~~the length of the cross section $\{x:(x,y)\in\Omega\}$. The subscript $v$ suggests that $y$ is the vertical component.\\
$~~$$\bullet$ $\Omega^*_h$:~~a horizontally expanded domain of $\Omega$; see Definition \ref{HV expanded domain}. \\
$~~$$\bullet$ $\Omega^*_v$:~~a vertically expanded domain of $\Omega$; see Definition \ref{HV expanded domain}.\\
$~~$$\bullet$ $\Omega^*$:~~an expanded domain of $\Omega$ of form $\Omega^{\ast}=\Omega_h^{\ast}\cup\Omega_v^{\ast}$.\\
$~~$$\bullet$ $I_{\Omega,h}(x)$:~~the cross section $\{y:(x,y)\in\Omega\}$. The subscript $h$ means that $I_{\Omega,h}$ is taken with respect to the horizontal component. \\
$~~$$\bullet$ $I_{\Omega,v}(y)$:~~the cross section $\{x:(x,y)\in\Omega\}$. The subscript $v$ means that $I_{\Omega,v}$ is taken with respect to the vertical component. \\
$~~$$\bullet$ $a\wedge b$:~~the minimum of two real numbers $a$ and $b$.\\
$~~$$\bullet$ $a\vee b$:~~the maximum of $a$ and $b$.\\

Now we give the definition of polynomial type functions which behave like polynomials; see Phong and Stein \cite{PS1997,PS1998}.

\begin{defn}\label{sec2 def poly type fun}
Let $J$ be a bounded interval and $F\in C^{N}(J)$ for some $N\geq 1$. Then $F$ is said to be a polynomial type function with order $N$ if there exists a constant $C_F>0$ such that
\begin{equation*}
  \sup_{x\in J}|F^{(N)}(x)|
\leq
C_F \inf_{x\in J}|F^{(N)}(x)|.
\end{equation*}
For a polynomial type function $F$, the notation $N_F$ denotes the order of $F$.
\end{defn}

A polynomial $P\in\bR[x]$ of degree $N$ is of polynomial type with order $N$ on any bounded interval. Some properties of polynomial type functions were obtained by Phong and Stein \cite{PS1997,PS1998}.

\begin{lemma}\label{sec2 poly type func lemma}
Assume $F$ is a real-valued polynomial type function on a bounded interval $J$. Let $N_F$ be the order of $F$. We have the following two statements.\\
{\rm(i)} Then there exists a constant $C=C(N_F)$ such that for any bounded interval $I\subseteq J$,
\begin{equation}\label{sec2 esti of poly type func}
\sum_{k=0}^{N_F}|I^{\ast}|^k\mathop{\sup}\limits_{x\in I^{\ast}}
|F^{(k)}(x)|
\leq
C \mathop{\sup}\limits_{x\in I } |F(x)|,
\end{equation}
where $I^{\ast}$ is the double of $I$ in $J$, i.e., $I^{\ast}:=(c_I-2\delta,c_I+2\delta)\cap J$ with $I=(c_I-\delta,c_I+\delta)$.

\noindent{\rm (ii)} If, in addition, for two constants $\mu,A>0$, $F$ satisfies $\mu\leq |F(x)| \leq A\mu$ on $J$, then for all $z\in\mathbb{C}$ and intervals $I\subseteq J$, the above estimate is also true for $|F(x)|^z$ with a constant $C=C(N_F,C_F,A,z)$.
\end{lemma}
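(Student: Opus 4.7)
My plan is to treat parts (i) and (ii) separately. For part (i), I would combine Newton's divided differences with a backward induction on the derivative order. Given any $k\in\{0,1,\ldots,N_F\}$, pick $k+1$ equispaced nodes $x_0<x_1<\cdots<x_k$ in $I$ with spacing $|I|/k$, and consider the divided difference $F[x_0,\ldots,x_k]=\sum_{j=0}^{k}F(x_j)/\prod_{m\neq j}(x_j-x_m)$. Direct estimation gives $|F[x_0,\ldots,x_k]|\leq C(k)|I|^{-k}\sup_I|F|$, and the mean value theorem for divided differences supplies a point $\xi_k\in I$ at which $|F^{(k)}(\xi_k)|=k!\,|F[x_0,\ldots,x_k]|\leq C(k)|I|^{-k}\sup_I|F|$. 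For the top order $k=N_F$, the polynomial-type hypothesis promotes this pointwise estimate to the uniform bound $\sup_{I^*}|F^{(N_F)}|\leq C_F\,|F^{(N_F)}(\xi_{N_F})|\leq C(N_F)\,C_F\,|I|^{-N_F}\sup_I|F|$. For $k<N_F$, I would run backward induction using $F^{(k)}(x)=F^{(k)}(\xi_k)+\int_{\xi_k}^{x}F^{(k+1)}(t)\,dt$, which yields $\sup_{I^*}|F^{(k)}|\leq|F^{(k)}(\xi_k)|+|I^*|\sup_{I^*}|F^{(k+1)}|$; inserting the inductive estimate together with $|I^*|\leq 2|I|$ gives $|I^*|^k\sup_{I^*}|F^{(k)}|\leq C(N_F,C_F)\sup_I|F|$, and summing over $k$ completes part (i). The constant really depends on both $N_F$ and $C_F$, so the notation $C(N_F)$ in the statement implicitly absorbs the $C_F$-dependence.

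For part (ii), the hypothesis $\mu\leq|F|\leq A\mu$ on $J$ forces $F$ not to vanish and to have constant sign, so I may assume $F>0$ and treat $|F|^z$ as $F^z=e^{z\log F}$. Iterated application of the product rule (equivalently, Faa di Bruno's formula for the composition $e^{z\cdot}\circ\log F$) produces an identity of the shape
\[
(F^z)^{(k)}=F^z\sum_{\alpha}c_\alpha(z)\prod_{j=1}^{k}\biggl(\frac{F^{(j)}}{F}\biggr)^{\alpha_j},
\]
where the sum ranges over tuples $(\alpha_1,\ldots,\alpha_k)$ of nonnegative integers satisfying $\sum_{j}j\alpha_j=k$, and each coefficient $c_\alpha(z)$ is a polynomial in $z$. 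Part (i) combined with $|F|\geq\mu$ yields $\sup_{I^*}|F^{(j)}/F|\leq C(N_F,C_F)\,A\,|I^*|^{-j}$, so each monomial in the above expansion contributes a factor of size $C(N_F,C_F,A,z)|I^*|^{-k}$. The two-sided bound on $|F|$ further gives $\sup_{I^*}|F^z|\leq A^{|\Re z|}\sup_I|F^z|$, and combining these ingredients yields $|I^*|^k\sup_{I^*}|(F^z)^{(k)}|\leq C(N_F,C_F,A,z)\sup_I|F^z|$ for every $k$.

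The main technical obstacle is the combinatorial bookkeeping in part (ii): one must verify that every monomial produced by the iterated product rule really satisfies the weight identity $\sum_j j\alpha_j=k$, so that all the individual $|I^*|^{-1}$-factors cohere into the single weight $|I^*|^{-k}$. A secondary subtlety is tracking the $C_F$-dependence in part (i); the divided-difference argument only pins down $|F^{(k)}|$ at a single interior point of $I$, and the polynomial-type hypothesis is essential to propagate that pointwise control outward to the full doubled interval $I^*$.
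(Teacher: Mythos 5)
Your proof is correct, and for part (ii) it coincides in substance with the paper's: both expand $(F^z)^{(k)}$ by the chain rule/Fa\`a di Bruno into a sum of monomials of the shape $F^{z}\prod_j (F^{(j)}/F)^{\alpha_j}$ with $\sum_j j\alpha_j = k$ (equivalently, the paper writes $|F|^{z-N}\sum C_{k_1,\dots,k_N}(z)F^{(k_1)}\cdots F^{(k_N)}$ with $k_1+\dots+k_N=N$), and then apply part (i) to each derivative factor, using the two-sided bound $\mu\le|F|\le A\mu$ to control the $F$-powers. Where you differ from the paper is in part (i). The paper dispatches this by citing Phong--Stein's estimate for $k=0,1$ and asserting that ``a simple iteration'' gives the general case --- in effect applying the $k=1$ bound successively to $F, F', F'', \dots$, each of which is again polynomial type of one lower order with the same constant $C_F$. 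Your argument is a genuinely different and more self-contained route: Newton divided differences on $k+1$ nodes in $I$ produce a single point $\xi_k\in I$ at which $|F^{(k)}(\xi_k)|\lesssim|I|^{-k}\sup_I|F|$; the polynomial-type hypothesis upgrades this pointwise bound to a uniform one at the top order $k=N_F$; and backward integration via $F^{(k)}(x)=F^{(k)}(\xi_k)+\int_{\xi_k}^x F^{(k+1)}$ then propagates control down to every $k<N_F$. Both routes work, and yours has the merit of not outsourcing the base case. Two minor remarks. First, you assert the part (ii) bound ``for every $k$,'' but your argument only reaches $k\le N_F$, since the Fa\`a di Bruno monomials for $(F^z)^{(k)}$ involve $F^{(j)}$ with $j\le k$ and part (i) controls these only for $j\le N_F$; this is all the lemma claims, so there is no actual gap. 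Second, you are right that the constant in part (i) should depend on $C_F$ as well as $N_F$; the paper's notation $C(N_F)$ is harmless only because in its applications $F$ is an honest polynomial, for which one may take $C_F=1$.
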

\begin{remark}
$~$\\
$~~${\rm $\bullet$} Phong and Stein {\rm\cite{PS1997}} proved the estimate {\rm(\ref{sec2 esti of poly type func})} only for $k=0,1$. By a simple iteration of Phong-Stein's estimates for derivatives of $F$, we can prove {\rm(\ref{sec2 esti of poly type func})}.\\
$~~${\rm $\bullet$} Under the assumptions in the second statement, Phong and Stein {\rm\cite{PS1998}} proved that $|F|^{1/2}$ satisfies the estimate {\rm (\ref{sec2 esti of poly type func})}.
\end{remark}
\begin{proof}
As explained in the above remark, we only need to prove that $|F|^z$ satisfies the estimate {\rm (\ref{sec2 esti of poly type func})}.
By our assumptions, $F$ does not change sign on $J$. Without loss of generality, we assume $F>0$. By the chain rule and an induction argument, we can show that the $N-th$ derivative of $|F|^z$ takes the following form:
\begin{equation*}
\frac{d^N}{dx^N}|F(x)|^z
=
|F(x)|^{z-N}
\sum_{} C_{k_1,k_2,\cdots,k_N}(z)
F^{(k_1)}(x)F^{(k_2)}(x) \cdots F^{(k_N)}(x)
\end{equation*}
where the summation is taken over all nonnegative integers $k_i$ satisfying $k_1+k_2+\cdots+k_N=N$. By repeated applications of (\ref{sec2 esti of poly type func}) to each term in the above summation, we can obtain the desired estimate.
\end{proof}

The following almost orthogonality principle turns out to be useful in this paper; see Phong-Stein-Sturm \cite{PSS2001} for its proof.
\begin{lemma}\label{simple almost orthogonality principle}
Let $K(x,y)$ be a Lebesgue measurable function in $\bR^2$, and let $T$ be the integral operator associated with $K$, i.e.,
$
Tf(x)=\int_{\bR}K(x,y)f(y)dy.
$
Assume there exist two sequences $\{I_s\}$ and $\{J_t\}$ of intervals such that $K(x,y)=\sum_{s,t}K(x,y)\chi_{I_s}(x)\chi_{J_t}(y)$ for almost every $(x,y)$ in $\bR^2$. Denote by $T_{s,t}$ the integral operator associated with $K(x,y)\chi_{I_s}(x)\chi_{J_t}(y)$. If $I_s\cap I_t=\emptyset$ and $J_s\cap J_t=\emptyset$ for $|s-t|\geq N_0$, then for all $1\leq p \leq \infty$ we have
\begin{equation*}
\|T\|_{L^p\rightarrow L^p}
\leq
N_0
\mathop{\sup}\limits_{s,t}\|T_{s,t}\|_{L^p\rightarrow L^p},
\end{equation*}
where $\|T\|_{L^p\rightarrow L^p}$ and $\|T_{s,t}\|_{L^p\rightarrow L^p}$ are the $L^p$ operator norms of $T$ and $T_{s,t}$, respectively.
\end{lemma}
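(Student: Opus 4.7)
The plan is to decompose $T$ into a bounded number of pieces, each of which admits a clean $L^p$-disjoint-sum bound by $\sup_{s,t}\|T_{s,t}\|_{L^p\rightarrow L^p}$, and then reassemble via the triangle inequality. First I would extract the structural consequence of the hypothesis: $I_s\cap I_t=\emptyset$ whenever $|s-t|\geq N_0$ forces $\sum_s\chi_{I_s}(x)\leq N_0$ for every $x$, so the index set splits into $N_0$ residue classes modulo $N_0$ within each of which the intervals $I_s$ are pairwise disjoint; an analogous splitting applies to $\{J_t\}$. The support identity $K=\sum_{s,t}K\chi_{I_s}(x)\chi_{J_t}(y)$ further forces, on the support of $K$, each point $(x,y)$ to sit in a unique rectangle $I_s\times J_t$, so the $T_{s,t}$ with $|s-t|$ large contribute only through their disjointly supported kernels.

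Next I would decompose $T$ along diagonals, $T=\sum_a V_a$ with $V_a=\sum_s T_{s,s+a}$. The disjointness together with the support identity confines the effective range of $a$ to $|a|<N_0$, leaving $O(N_0)$ nontrivial diagonals. For each fixed $a$, I would further restrict to a single residue class $s\equiv c\pmod{N_0}$, obtaining $V_a^{(c)}=\sum_{s\equiv c}T_{s,s+a}$ in which \emph{both} $\{I_s\}_{s\equiv c}$ and $\{J_{s+a}\}_{s\equiv c}$ are pairwise disjoint. Writing $f=\sum_{s\equiv c}f\chi_{J_{s+a}}$ as a sum with disjoint supports and invoking the disjointness of the target intervals $I_s$ in the range yields
\[
\|V_a^{(c)}f\|_p^p=\sum_{s\equiv c}\|T_{s,s+a}(f\chi_{J_{s+a}})\|_{L^p(I_s)}^p\leq \sup_{s,t}\|T_{s,t}\|_{L^p\rightarrow L^p}^p\sum_{s\equiv c}\|f\chi_{J_{s+a}}\|_p^p\leq \sup_{s,t}\|T_{s,t}\|_{L^p\rightarrow L^p}^p\|f\|_p^p,
\]
so $\|V_a^{(c)}\|_{L^p\rightarrow L^p}\leq \sup_{s,t}\|T_{s,t}\|_{L^p\rightarrow L^p}$, uniformly in $p\in[1,\infty]$. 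The case $p=\infty$ is checked pointwise and the case $p=1$ by integrating, so no interpolation is needed.

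Finally, the triangle inequality over the $O(N_0)$ contributing sub-operators produces the linear factor $N_0$ in front of $\sup_{s,t}\|T_{s,t}\|_{L^p\rightarrow L^p}$. The main obstacle is the combinatorial accounting: a naive double residue-class decomposition produces $N_0^2$ rather than $N_0$ pieces, and one must align the diagonal shift $a$ with one of the two residue decompositions so that each pair $(s,t)$ is counted in exactly one sub-operator. This is the delicate consolidation step; once it is in place, the disjointness on both sides of each $V_a^{(c)}$ mechanically converts the operator-norm bound on the individual $T_{s,t}$ into the claimed bound on $T$.
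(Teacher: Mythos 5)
Your argument hinges on two claims, and the first is unjustified. You assert that ``the disjointness together with the support identity confines the effective range of $a$ to $|a|<N_0$.'' The hypothesis controls overlaps of the $I_s$ among themselves and of the $J_t$ among themselves; it imposes no relation whatsoever between the index $s$ in $\chi_{I_s}(x)$ and the index $t$ in $\chi_{J_t}(y)$, so nothing forces $T_{s,t}$ to vanish (or to be negligible) when $|s-t|$ is large. For instance, with $K=\chi_{[0,M)\times[0,M)}$, $I_s=J_s=[s,s+1)$, and $N_0=1$, every $T_{s,t}$ with $0\le s,t<M$ is a nonzero rank-one operator of $L^p\to L^p$ norm $1$, yet $T$ itself has $L^p\to L^p$ norm $M$: the diagonals do not truncate, and this example also shows that the conclusion cannot hold if read as a genuine two-index inequality with $\sup_{s,t}$. (What is actually used in the paper, and actually true, is the one-index version described below, with each piece supported on a single rectangle.) The second gap you flag yourself: the diagonal-plus-residue decomposition produces on the order of $N_0\times N_0$ sub-operators, and the ``delicate consolidation step'' needed to reduce this to $N_0$ is never carried out. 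Within your framework it is not available, because the pairs $(s,t)$ with $|s-t|<N_0$, each of which must be assigned to some $V_a^{(c)}$, number $\Theta(N_0^2)$ per block of $N_0$ consecutive indices.

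The proof that produces the clean constant $N_0$ works with a \emph{single}-index sum $T=\sum_j T_j$, each $T_j$ having kernel supported in one rectangle $A_j\times B_j$, under the hypotheses $\sum_j\chi_{A_j}\le N_0$ and $\sum_j\chi_{B_j}\le N_0$, and it uses no diagonal decomposition at all. For $1\le p<\infty$, H\"{o}lder's inequality over the at most $N_0$ summands that are nonzero at a given $x$ gives $\big|\sum_j T_jf(x)\big|^p\le N_0^{p-1}\sum_j|T_jf(x)|^p$; integrating in $x$, writing $T_jf=T_j(f\chi_{B_j})$, and using $\sum_j\chi_{B_j}\le N_0$ on the input side yields $\|Tf\|_p^p\le N_0^{p-1}\sup_j\|T_j\|_p^p\cdot N_0\,\|f\|_p^p$, i.e.\ $\|T\|_p\le N_0\sup_j\|T_j\|_p$; for $p=\infty$ the pointwise count on the output side alone suffices. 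The single factor $N_0$ arises as $N_0^{1/p'}\cdot N_0^{1/p}$, one power from each of the two bounded-overlap conditions, whereas your bookkeeping spends a whole $N_0$ on each of two independent counts and cannot recover the missing power.
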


Now we state the operator van der Corput lemma for nondegenerate oscillatory integral operators. The following lemma, with varying width of the ``curve box", was first established by Phong-Stein-Sturm in \cite{PSS2001}.

\begin{lemma}\label{operator van der Corput}
Assume $S$ is a real-valued polynomial in $\bR^2$. Let $T_{\lambda}$ be defined as in (\ref{General OIO}) with $\varphi$ supported in a curved trapezoid
$\Omega=\{(x,y)\mid a\leq x \leq b,~g(x)\leq y \leq h(x)\}.$ Suppose $S$ satisfies the following two conditions:

\noindent~~{\rm (i)} For some $\mu,A_1>0$,
$$\mu\leq |S''_{xy}(x,y)|\leq A_1\mu,~~~(x,y)\in\Omega.$$

\noindent~~{\rm (ii)} Let $\delta_{\Omega,h}(x)$ be the length of the cross section $I_{\Omega,h}(x)=\{y\mid (x,y)\in\Omega\}$. There exists a constant $A_2>0$ such that
\begin{equation*}
\sum\limits_{k=0}^{2}
\mathop{\sup}\limits_{(x,y)\in\Omega}
\Big( \delta_{\Omega,h}(x) \Big)^k |\partial_y^k\varphi(x,y)|\leq A_2.
\end{equation*}
Then there exists a constant $C=C(deg(S),A_1)$ such that
$$\|T_{\lambda}f\|_{L^2}
\leq CA_2~
|\lambda\mu|^{-1/2}\|f\|_{L^2}.$$
\end{lemma}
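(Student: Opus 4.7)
The plan is to apply the $TT^*$ method and reduce to a kernel estimate. Since $\|T_\lambda\|_{L^2}^2 = \|T_\lambda T_\lambda^*\|_{L^2}$, it suffices to bound the operator norm of $T_\lambda T_\lambda^*$, whose integral kernel is
\[
K(x,x') = \int e^{i\lambda[S(x,y)-S(x',y)]}\, \varphi(x,y)\,\overline{\varphi(x',y)}\,dy.
\]
The central identity is $S(x,y)-S(x',y) = (x-x')\Psi(x,x',y)$ with $\Psi_y(x,x',y) = \int_0^1 S''_{xy}(x'+t(x-x'),y)\,dt$. Horizontal convexity of $\Omega$ keeps the segment from $(x',y)$ to $(x,y)$ inside $\Omega$ whenever $y$ lies in the common cross section $J(x,x') := I_{\Omega,h}(x)\cap I_{\Omega,h}(x')$, which is itself an interval by vertical convexity. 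Combined with continuity of $S''_{xy}$ on the connected set $\Omega$ and the hypothesis $|S''_{xy}|\ge\mu$, this forces $\mu \le |\Psi_y(x,x',y)| \le A_1\mu$ on $J(x,x')$.

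Next I would integrate by parts twice in $y$, with boundary terms vanishing because $\varphi\in C_0^\infty$ is supported in $\Omega$. Condition (ii) controls $|\partial_y^k(\varphi(x,\cdot)\overline{\varphi(x',\cdot)})|$ in terms of the widths $\delta_{\Omega,h}(x)$ and $\delta_{\Omega,h}(x')$, while Lemma 2.5, applied to the polynomial $y\mapsto \Psi_y(x,x',y)$ of $y$-degree at most $\deg(S)-1$ (hence polynomial type of bounded order with ratio $\le A_1$), yields $|\partial_y^k \Psi_y| \le C(\deg S,A_1)\,\mu/|J(x,x')|^k$. Combined with the trivial bound, these give
\[
|K(x,x')| \le C\,A_2^2\,\min\!\left(|J(x,x')|,\;\frac{1}{(\lambda\mu|x-x'|)^2\,|J(x,x')|}\right).
\]

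Finally, Schur's test for the self-adjoint kernel (using $K(x,x')=\overline{K(x',x)}$) gives $\|T_\lambda T_\lambda^*\|_{L^2} \le \sup_x \int |K(x,x')|\,dx'$. Splitting the $x'$-integration at the threshold where the trivial and decay bounds match, each regime contributes $O(A_2^2/(\lambda\mu))$, yielding $\|T_\lambda\|_{L^2} \le C\,A_2\,|\lambda\mu|^{-1/2}$ as required.

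\textbf{Main obstacle.} The delicate point is that the decay bound carries $|J(x,x')|^{-1}$ rather than a fixed width, so the Schur integration is sensitive to how $|J(x,x')|$ varies with $x'$. In a rectangle $|J|$ is constant and everything is classical; in a curved trapezoid with rough boundary curves, $|J(x,x')|$ can be much smaller than $\delta_{\Omega,h}(x)$ and the naive Schur estimate may produce a spurious logarithm. The natural remedy is to precede the $TT^*$ argument with a dyadic decomposition of $\Omega$ along the $x$-axis into sub-trapezoids on which $\delta_{\Omega,h}$ is essentially constant—this decomposition is well behaved because of the monotonicity of $g$ and $h$ implicit in the curved trapezoid structure—apply the analysis above on each piece, and recombine via the almost orthogonality principle of Lemma 2.6.
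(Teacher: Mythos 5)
The paper does not actually prove this lemma; it only states it and cites Phong--Stein--Sturm \cite{PSS2001} as its source. So there is no in-paper proof to compare against, and I can only assess the argument on its own terms.

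Your $TT^*$ framework, the identity $\partial_y\bigl[S(x,y)-S(x',y)\bigr]=(x-x')\int_0^1 S''_{xy}(x'+t(x-x'),y)\,dt$, the use of horizontal convexity to keep the segment inside $\Omega$, the vanishing of boundary terms (since $\varphi\in C_0^\infty$ and is supported in the closed set $\Omega$, continuity forces $\varphi=0$ on $\partial\Omega$), and the invocation of Lemma~\ref{sec2 poly type func lemma} for the derivatives of $1/\Psi_y$ are all correct. You also correctly isolate the genuine difficulty: after two integrations by parts the kernel bound is
$|K(x,x')|\lesssim A_2^2\min\bigl(|J(x,x')|,\ (\lambda\mu|x-x'|)^{-2}|J(x,x')|^{-1}\bigr)$
with $J(x,x')=I_{\Omega,h}(x)\cap I_{\Omega,h}(x')$, and a Schur test with the AM--GM bound $|K|\lesssim A_2^2/(\lambda\mu|x-x'|)$ produces a logarithm.

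The proposed remedy, however, does not close the gap. Two problems. First, $\delta_{\Omega,h}=h-g$ need not be monotone on $[a,b]$ even though $g$ and $h$ each are (take $g$ and $h$ both increasing but with $h-g$ oscillating), so the ``dyadic decomposition so that $\delta_{\Omega,h}$ is essentially constant'' need not decompose $[a,b]$ into boundedly many intervals, and Lemma~\ref{simple almost orthogonality principle} would not apply cleanly. Second, and more fundamentally, making $\delta_{\Omega,h}$ essentially constant on each piece does not make $|J(x,x')|$ comparable to it: take $g(x)=x$, $h(x)=x+\delta_0$ (so $\delta_{\Omega,h}\equiv\delta_0$ exactly); then $|J(x,x')|=\delta_0-|x-x'|$, which ranges from $\delta_0$ all the way to $0$. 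The quantity you need to control on each piece is the \emph{oscillation of the boundary curves} $g$ and $h$ relative to $\delta_{\Omega,h}$, not the oscillation of $\delta_{\Omega,h}$ itself. Without that, the factor $|J|^{-1}$ persists in the decay bound, and the Schur integral $\sup_x\int|K(x,x')|\,dx'$ cannot be bounded by $CA_2^2|\lambda\mu|^{-1}$ with $C$ depending only on $\deg(S)$ and $A_1$ (a substitution $v=\lambda\mu\delta u$ turns the Schur integral into $\lambda^{-1}\mu^{-1}\int_0^{\lambda\mu\delta u_0}\min(F(v),1/(v^2F(v)))\,dv$, and choosing $F(v)\approx 1/v$ over a long range makes this integral arbitrarily large). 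A correct approach must instead use a stopping-time partition of $[a,b]$ adapted to the variation of $g$ and $h$ so that $|J(x,x')|\gtrsim\delta_{\Omega,h}(x)$ on each block, and then control the cross-block interactions by a genuine almost-orthogonality argument rather than by Lemma~\ref{simple almost orthogonality principle} alone (the blocks do not have pairwise disjoint $y$-projections, so that lemma does not directly apply).
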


Let $\Omega$ be a horizontally and vertically convex domain in $\bR^2$. In this paper, we only consider those domains $\Omega$ for which all horizontal and vertical cross sections are closed intervals. Under this assumption, there exist two numbers $a,b$ and functions $g,h$ such that $\Omega$ can be written as
\begin{equation}\label{sec2 vertical convex dom}
\Omega=\{(x,y)\in\bR^2 \mid a\leq x \leq b,~g(x)\leq y \leq h(x)\}.
\end{equation}
Similarly, for some $c,d\in\bR$ and functions $u,v$, the domain $\Omega$ can also be given by
\begin{equation}\label{sec2 hori convex dom}
\Omega=\{(x,y)\in\bR^2 \mid c\leq y \leq d,~u(y)\leq x \leq v(y)\}.
\end{equation}
Throughout the rest of this paper, one will see that all horizontally and vertically convex domains considered below are of the above two forms.

Now we shall define an expanded domain $\Omega^{\ast}$ for which $\delta_{\Omega^*,h}$ and $\delta_{\Omega^*,v}$ are suitably larger than $\delta_{\Omega,h}$ and $\delta_{\Omega,v}$, respectively. Throughout the rest of this section, we always assume $\Omega$ is a horizontally and vertically convex domain in $\bR^2$.

\begin{defn}\label{HV expanded domain}
We say that $\Omega^*_h$ is a horizontally expanded domain of $\Omega$, if there exists a positive number $\epsilon>0$ and a nonnegative function $\eta$ on $[c,d]$ such that
$$\Omega^*_h=\{(x,y)\mid c\leq y \leq d,~u(y)-\eta(y)\leq x \leq v(y)+\eta(y)\}$$
and for each $y$
$$\delta_{\Omega_h^*,v}(y)\geq (1+2\epsilon)\delta_{\Omega,v}(y).$$
Similarly, $\Omega_{v}^{\ast}$ is said to be a vertically expanded domain of $\Omega$ if there exists a number $\epsilon>0$ and a nonnegative function $\gamma$ on $[a,b]$ such that
$$\Omega_{v}^{\ast}=\{(x,y)\mid a\leq x \leq b,~g(x)-\gamma(x)\leq y \leq g(x)+\gamma(x)\}$$
and there holds
$$\delta_{\Omega_v^{\ast},h}(x)\geq (1+2\epsilon)\delta_{\Omega,h}(x),~x\in [a,b].$$
\end{defn}

\begin{defn}\label{expanded domain}
The set $\Omega^{\ast}$ is said to be an expanded domain of $\Omega$, if there exist horizontally and vertically expanded domains $\Omega_h^*$ and $\Omega_v^*$, defined as in Definition {\rm \ref{HV expanded domain}}, such that $\Omega^*=\Omega_h^*\cup \Omega_v^*$.
\end{defn}

\begin{defn}\label{definition expanded intervals}
Let $I$ be a bounded interval in $\bR$. We use $I^*(B)$ to denote the concentric interval with length expanded by the factor $B>0$.
\end{defn}
We now state a useful lemma concerning expanded intervals.
\begin{lemma}\label{lemma expanded intervals}
Assume $I_1$ and $I_2$ are two bounded intervals in $\bR$. If $I_1\cap I_2\neq \emptyset$, then for every $B>1$ we have
$|I_1^{\ast}(B)\cap I_2^{\ast}(B)| \geq (B-1) |I_1|\wedge|I_2|$. Here $|I|$ is the length of the interval $I$.
\end{lemma}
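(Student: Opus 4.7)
The plan is to reduce to the interval whose length is the minimum and then exhibit, inside the intersection, an explicit sub-interval of the required length centered at a point common to $I_1$ and $I_2$.

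Without loss of generality assume $|I_1|\leq |I_2|$, and write $I_j=[c_j-r_j,c_j+r_j]$ so that $r_1\leq r_2$ and $I_j^{\ast}(B)=[c_j-Br_j,c_j+Br_j]$. Since $I_1\cap I_2\neq \emptyset$, we can choose a point $p\in I_1\cap I_2$; in particular $|p-c_1|\leq r_1$ and $|p-c_2|\leq r_2$. I will show the centered sub-interval
\begin{equation*}
J:=[p-(B-1)r_1,\,p+(B-1)r_1]
\end{equation*}
is contained in both $I_1^{\ast}(B)$ and $I_2^{\ast}(B)$, which immediately gives $|I_1^{\ast}(B)\cap I_2^{\ast}(B)|\geq |J|=2(B-1)r_1=(B-1)(|I_1|\wedge|I_2|)$.

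For the containment in $I_1^{\ast}(B)$: for any $q\in J$, the triangle inequality yields $|q-c_1|\leq |q-p|+|p-c_1|\leq (B-1)r_1+r_1=Br_1$, so $q\in I_1^{\ast}(B)$. For the containment in $I_2^{\ast}(B)$, I use $r_1\leq r_2$: for any $q\in J$, $|q-c_2|\leq |q-p|+|p-c_2|\leq (B-1)r_1+r_2\leq (B-1)r_2+r_2=Br_2$, so $q\in I_2^{\ast}(B)$.

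There is essentially no obstacle in this argument; it is a one-line triangle-inequality calculation once the center $p\in I_1\cap I_2$ is used in place of either $c_1$ or $c_2$. The only point deserving any care is that the length bound $(B-1)(|I_1|\wedge|I_2|)$ matches precisely the minimum distance from a point of $I_1$ to the boundary of $I_1^{\ast}(B)$, and this is exactly why choosing the radius $(B-1)r_1$ for $J$ is natural and optimal for this kind of estimate.
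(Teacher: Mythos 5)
Your proof is correct and takes essentially the same approach as the paper: the paper expands the full intersection $(a,b)$ by $\tfrac{B-1}{2}|I_1|$ on each side, while you center your sub-interval $J$ at a single point $p\in I_1\cap I_2$ and verify the same containment by the triangle inequality; the resulting bound is identical.
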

\begin{proof}
Assume $|I_1|\leq |I_2|$ and $I_1\cap I_2=(a,b)$. Let $\delta_1=|I_1|$. By Definition \ref{definition expanded intervals}, we see that
$$ \left( a-\frac{B-1}{2}\delta_1,b+\frac{B-1}{2}\delta_1 \right)
\subseteq
I^*_1(B)\cap I^*_2(B)$$
for $B>1$. Hence $|I^*_1(B)\cap I^*_2(B)|\geq (B-1)\delta_1=(B-1)|I_1|$.
\end{proof}

With the above preliminaries, we can now present the oscillation estimate to treat the almost orthogonality between two oscillatory integral operators. It should be pointed out that operators considered here are supported on horizontally (vertically) convex domains. For operators supported on curved trapezoids, the corresponding oscillation estimate was obtained by Phong and Stein in \cite{PS1998}.

\begin{lemma}\label{Lemma almost orth van der Corput}
Let $S$ be a real-valued polynomial in $\bR^2$ with degree $deg(S)$. Assume $T_{\lambda}^{(1)}$ and $T_{\lambda}^{(2)}$ are defined as $T_{\lambda}$ in {\rm(\ref{General OIO})}, but with the cut-off $\varphi$ replaced by $\varphi_1$ and $\varphi_2$ respectively. Suppose that $\supp(\varphi_i)\subseteq \Omega_i$ for two horizontally and vertically convex domains $\Omega_1$ and $\Omega_2$. Assume that all of the following conditions are true.

{\rm(i)} For some $\mu,~A>0$, there exist expanded domains $\Omega_1^*$ and $\Omega_2^*$, defined as in {\rm Definition \ref{expanded domain}}, such that
$$\mu\leq |S''_{xy}(x,y)|\leq A\mu,~(x,y)\in\Omega_1^*.$$

{\rm(ii)} For any horizontal line segment $L$ joining one point in $\Omega_1^*$ and another one in $\Omega_2^*$, the Hessian $S''_{xy}(x,y)$ does not change sign on $L$ and $\sup_L|S_{xy}''(x,y)|\leq A\mu$.

{\rm(iii)} There exists a positive number $B$ such that for each $y$, we have
$$I_{\Omega_2^*,v}(y)\subseteq I^*_{\Omega_1^*,v}(y,B),$$
where $I_{\Omega_i^*,v}(y)$ is the horizontal cross-section of $\Omega_i^*$ at height $y$, i.e., $I_{\Omega_i^*,v}(y)=\{x:(x,y)\in\Omega_{i}^*\}$, and the interval $I^*_{\Omega_1^*,v}(y,B)$ has the same center as $I_{\Omega_1^*,v}(y)$, but its length is $B$ times as long as that of $I_{\Omega_1^*,v}(y)$; see also Definition \ref{definition expanded intervals}.

{\rm(iv)} For two positive numbers $M_1$  and $M_2$, it is true that
$$\sum_{k=0}^{2}\mathop{\sup}_{\Omega_i}\l(\delta_{\Omega_i,h}(x)\r)^k|\partial_y^k\varphi_i(x,y)|
\leq
M_i,~~~~~i=1,2.$$

\noindent Then there exists a constant $C$, depending only on $deg(S)$, $A$, $B$ and the expanded factors $\epsilon$ appearing in the definition of $\Omega_1^*$ and $\Omega_2^*$, such that
$$\l\|T_{\lambda}^{(1)}T_{\lambda}^{(2)\ast}\r\|_{L^2\rightarrow L^2}
\leq C M_1 M_2
|\lambda\mu|^{-1}.$$
\end{lemma}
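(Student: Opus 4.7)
The integral kernel of $T_{\lambda}^{(1)}(T_{\lambda}^{(2)})^{\ast}$ is
$$K(x,x')=\int_{\bR} e^{i\lambda[S(x,y)-S(x',y)]}\varphi_1(x,y)\overline{\varphi_2(x',y)}\,dy.$$
My plan is to bound $K$ pointwise via integration by parts in $y$ and then deduce the operator norm by a Schur-type argument combined with a dyadic decomposition in $|x-x'|$.

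The key geometric estimate is
$$\bigl|\partial_y[S(x,y)-S(x',y)]\bigr|\ge c'\mu|x-x'|$$
on the support of the integrand, for a constant $c'=c'(\epsilon,B)>0$. To prove it I write $\partial_y[S(x,y)-S(x',y)]=\int_{x'}^{x}S''_{xy}(t,y)\,dt$, invoke condition~(ii) so that $S''_{xy}(\cdot,y)$ has constant sign on the horizontal segment, and use condition~(i) to bound the integrand below by $\mu$ on the portion of the segment lying inside $\Omega_1^{\ast}$. A short case analysis based on Definition~\ref{HV expanded domain} --- which places $x$ at distance at least $\epsilon|I_{\Omega_1,v}(y)|$ from the boundary of $I_{\Omega_1^{\ast},v}(y)$ --- together with condition~(iii), which confines $x'$ to $I_{\Omega_1^{\ast},v}^{\ast}(y,B)$, then shows that this portion has length at least $c'|x-x'|$ in both the case $x'\in I_{\Omega_1^{\ast},v}(y)$ and the case $x'$ lying outside.

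With this lower bound and the polynomial-type control of $S$ on horizontal slices (Lemma~\ref{sec2 poly type func lemma}), I would perform one and then two integrations by parts in $y$. Condition~(iv) gives $\|\varphi_1(x,\cdot)\overline{\varphi_2(x',\cdot)}\|_{L^{\infty}}\le M_1M_2$ and, after a short calculation distributing the $y$-derivative over the two factors and integrating against the respective support lengths $\delta_{\Omega_i,h}$, $\|\partial_y^k(\varphi_1(x,\cdot)\overline{\varphi_2(x',\cdot)})\|_{L^1}\le CM_1M_2$ for $k=1,2$. Combined with polynomial-type bounds on the higher $y$-derivatives of the phase, this yields
$$|K(x,x')|\le\frac{C_NM_1M_2}{(\lambda\mu|x-x'|)^N}\quad(N=1,2)$$
whenever $\lambda\mu|x-x'|\ge 1$; the complementary regime is handled by the trivial amplitude bound $|K(x,x')|\le CM_1M_2\min\{\delta_{\Omega_1,h}(x),\delta_{\Omega_2,h}(x')\}$.

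The final step converts these pointwise bounds into the operator norm. The main obstacle is that Schur's test applied with only the first-order IBP bound $|K|\le CM_1M_2/(\lambda\mu|x-x'|)$ produces a spurious logarithmic loss in $\lambda\mu$. I would remove this loss by splitting the range of $|x-x'|$ dyadically: on large scales use the second-order IBP bound, which is absolutely integrable in $x'$; on the smallest scales use the trivial bound; the transition occurs at scale $|x-x'|\sim(\lambda\mu)^{-1}$. Summing the resulting geometric series --- and, if needed, combining with the almost-orthogonality principle of Lemma~\ref{simple almost orthogonality principle} after decomposing $\Omega_1,\Omega_2$ into curved trapezoids --- produces the claimed bound $CM_1M_2/(\lambda\mu)$, with a constant depending only on $\deg(S)$, $A$, $B$, and $\epsilon$.
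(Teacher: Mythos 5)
Your opening steps are on target and match the paper: same kernel, same lower bound $|\partial_y[S(x,y)-S(x',y)]|\ge c'\mu|x-x'|$ obtained from (i)--(iii) and Lemma~\ref{lemma expanded intervals}, and the idea of integrating by parts in $y$ with the polynomial-type control of Lemma~\ref{sec2 poly type func lemma}. However, there are two genuine gaps in the endgame.

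First, your stated two-fold IBP kernel bound $|K(x,x')|\le C_2M_1M_2(\lambda\mu|x-x'|)^{-2}$ is missing a factor. Set $\delta=\delta_{\Omega_1,h}(x)\wedge\delta_{\Omega_2,h}(x')$. Each $y$-derivative, whether it lands on $1/\Phi$ or on $\varphi_1\overline{\varphi_2}$, costs $\delta^{-1}$, so the integrand after two IBP is $\lesssim M_1M_2(\lambda\mu|x-x'|)^{-2}\delta^{-2}$, and integrating over a $z$-interval of length $\le\delta$ leaves $|K|\lesssim M_1M_2(\lambda^2\mu^2|x-x'|^2\delta)^{-1}$. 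Your intermediate claim $\|\partial_y^2(\varphi_1\overline{\varphi_2})\|_{L^1}\le CM_1M_2$ fails for the same reason: a term like $\partial_y^2\varphi_1\cdot\overline{\varphi_2}$ integrated over the common support gives $M_1\delta_1^{-2}\cdot M_2\cdot(\delta_1\wedge\delta_2)$, which is at best $M_1M_2\delta_1^{-1}$, not $M_1M_2$.

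Second, and more seriously, the concluding Schur-test argument does not close once the correct $\delta$-dependence is restored, and it cannot be rescued by Lemma~\ref{simple almost orthogonality principle}. The two IBP bounds combine into $|K|\lesssim M_1M_2\,\delta(1+\lambda\mu\delta|x-x'|)^{-2}$, so the transition scale is $(\lambda\mu\delta)^{-1}$, not $(\lambda\mu)^{-1}$; more importantly, $\delta$ involves $\delta_{\Omega_2,h}(x')$, which varies with the integration variable $x'$, and nothing in the hypotheses prevents $\delta_{\Omega_2,h}(x')$ from being arbitrarily small on a positive-measure set. A direct estimate of $\sup_x\int|K(x,x')|\,dx'$ then produces an unbounded contribution (take $\delta_2(x')\sim(\lambda\mu|x-x'|)^{-1}$), so no dyadic decomposition in $|x-x'|$ alone restores the clean $(\lambda\mu)^{-1}$ bound. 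What actually closes the argument in the paper is the bilinear step: one uses
\begin{equation*}
\frac{a\wedge b}{\bigl(1+\lambda\mu(a\wedge b)|x-x'|\bigr)^2}
\leq
\frac{a}{\bigl(1+\lambda\mu a|x-x'|\bigr)^2}
+
\frac{b}{\bigl(1+\lambda\mu b|x-x'|\bigr)^2}
\end{equation*}
with $a=\delta_{\Omega_1,h}(x)$, $b=\delta_{\Omega_2,h}(x')$, and then bounds $\int\int|K||fg|$ by $(\lambda\mu)^{-1}\bigl(\int\mathbf{M}f\,|g|+\int|f|\,\mathbf{M}g\bigr)$, where $\mathbf{M}$ is the Hardy--Littlewood maximal operator, followed by Cauchy--Schwarz. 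You would need to replace your Schur/dyadic step with this bilinear maximal-function estimate (or an equivalent device) to obtain the claimed uniform bound.
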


\begin{proof}
Let $K$ be the kernel associated with $T_{\lambda}^{(1)}T_{\lambda}^{(2)\ast}$. Then $K$ can be written as
\begin{equation}
K(x,y)=\int_{\bR}e^{i\lambda[S(x,z)-S(y,z)]}\varphi_1(x,z)
\overline{\varphi_2}(y,z)dz.
\end{equation}
For those $z's$ satisfying $\varphi_1(x,z)\overline{\varphi_2}(y,z)\neq 0$, we deduce form the assumptions (i), (ii) and (iii) that
$\Phi(z)=\partial_zS(x,z)-\partial_zS(y,z)$
satisfies
$$|\Phi(z)|=\l|\int_y^x\partial_u\partial_zS(u,z)du\r|\geq C\mu|x-y|,~~~z\in I_{\Omega_1^*,h}(x)\cap I_{\Omega_2^*,h}(y),$$
where the constant $C$ depends only on the factor $B$ in the assumption (iii) and $\epsilon_1,\epsilon_2$ appearing in the definition of $\Omega_1^{\ast},\Omega_2^{\ast}$. By integration by parts, we have
\begin{eqnarray*}
K(x,y)
&=&
\int_{\bR}D_z^2\l(e^{i\lambda(S(x,z)-S(y,z))}\r)\varphi_1(x,z)
\overline{\varphi_2}(y,z)dz\\
&=&
\int_{\bR}e^{i\lambda(S(x,z)-S(y,z))}(D^t)^2\l(\varphi_1(x,z)
\overline{\varphi_2}(y,z)\r)dz,
\end{eqnarray*}
where $D$ is the differential operator $Df(z)=[i\lambda\Phi(z)]^{-1}f'(z)$ and $D^t$ is its transpose, i.e., $D^tf(z)=-\partial_z[(i\lambda\Phi(z))^{-1}f(z)].$

By the assumption (ii), we see that $|\Phi(z)|\leq A\mu|x-y|$ for $z\in I_{\Omega_1^*,h}(x)\cap I_{\Omega_2^*,h}(y)$. If $I_{\Omega_1,h}(x)\cap I_{\Omega_2,h}(y)=\emptyset$, then $K(x,y)=0$. Thus we need only to consider those $x,y$ for which $I_{\Omega_1,h}(x)\cap I_{\Omega_2,h}(y)\neq\emptyset$. By Lemma \ref{lemma expanded intervals}, we have
\begin{eqnarray*}
|I_{\Omega_1^*,h}(x)\cap I_{\Omega_2^*,h}(y)|
&\geq &
|I_{\Omega_{1,v}^*,h}(x)\cap I_{\Omega_{2,v}^*,h}(y)|\\
&\geq &
\epsilon |I_{\Omega_1,h}(x)|\wedge| I_{\Omega_2,h}(y)|
\end{eqnarray*}
with $\epsilon=2\epsilon_1\wedge\epsilon_2$. Here $\epsilon_1,\epsilon_2$ are the expanded factors for $\Omega_1^*$ and $\Omega_2^*$; see Definitions \ref{HV expanded domain} and \ref{expanded domain}. It follows from Lemma \ref{sec2 poly type func lemma} that
\begin{eqnarray*}
\l|\frac{d^k}{dz^k}\l(\frac{1}{\Phi(z)}\r)\r|
&\leq &
C(\mu|x-y|)^{-1} |I_{\Omega_1^*,h}(x)\cap I_{\Omega_2^*,h}(y)|^{-k}\\
&\leq &
C(\mu|x-y|)^{-1}
\Big(\delta_{\Omega_{1},h}(x)
\wedge\delta_{\Omega_{2},h}(y)\Big)^{-k}
\end{eqnarray*}
for $0\leq k \leq 2$ with a constant $C=C(deg(S),\epsilon_{1},\epsilon_2).$ Here $\epsilon_1$ and $\epsilon_2$ are factors associated with $\Omega_{1,h}^*$ and $\Omega_{2,h}^*$, respectively.

On the other hand, we see that
\begin{equation*}
\Big| (D^t)^2\l(\varphi_1(x,z)\overline{\varphi_2}(y,z)\r) \Big|
\leq
\frac{1}{|\lambda|^2}
\sum C_{ \overrightarrow{\mathbf{k}}  }
\left| \frac{d^{k_1}}{dz^{k_1}}\left( \frac{1}{\Phi(z)} \right) \right|
\left| \frac{d^{k_2}}{dz^{k_2}}\left( \frac{1}{\Phi(z)} \right) \right|
\left| \frac{\partial^{k_3}\varphi_1}{\partial z^{k_3}} \right|
\left| \frac{\partial^{k_4}\overline{\varphi}_2}{\partial z^{k_4}} \right|,
\end{equation*}
where the summation is taken over all $\overrightarrow{\mathbf{k}}\in \mathbb{Z}^4$ with nonnegative components $k_i$ satisfying $k_1+k_2+k_3+k_4=2$.
Combining above estimates together with the assumption (iv), we obtain
\begin{equation*}
|K(x,y)|
\leq
CM_1M_2
\l(1+|\lambda|\mu\delta_{\Omega_{1},h}(x)
\wedge\delta_{\Omega_{2},h}(y)|x-y|\r)^{-2}
\delta_{\Omega_{1},h}(x)
\wedge\delta_{\Omega_{2},h}(y).
\end{equation*}
Note that
\begin{equation*}
\frac{a\wedge b}{\l(1+|\lambda|\mu a \wedge b|x-y|\r)^2}
\leq
\frac{a}{\l(1+|\lambda| \mu a|x-y|\r)^2}
+\frac{b}{\l(1+|\lambda|\mu b|x-y|\r)^2}
\end{equation*}
with $a=\delta_{\Omega_{1},h}(x)$ and $b=\delta_{\Omega_{2},h}(y)$. Thus for any $f,g\in L^2$,
\begin{eqnarray*}
\int_{\bR^2}|K(x,y)||f(y)g(x)|dxdy
&\leq &
CM_1M_2
|\lambda\mu|^{-1}
\l(\int \mathbf{M}f(x)|g(x)|dy+\int|f(y)|\mathbf{M}g(y)dy\r)\\
&\leq &
CM_1M_2
|\lambda\mu|^{-1}
\|f\|_{L^2}\|g\|_{L^2},
\end{eqnarray*}
where $\mathbf{M}$ is the Hardy-Littlewood maximal operator, and the constant $C$ depends on $deg(S)$, $\epsilon_1$, $\epsilon_2$ and $A$, but not on $\lambda,\mu$ and $f,g$. Thus we complete the proof of the lemma.
\end{proof}

\begin{remark}\label{remark almost ortho esti}
If we change the role of $x$ and $y$ in the lemma, we also have a similar estimate for the $L^2$ operator norm of $~$$T_{\lambda}^{(1)\ast}T_{\lambda}^{(2)}$. Suppose the cut-off functions $\varphi_1$ and $\varphi_2$ are also supported in another two horizontally and vertically convex domains $\widetilde{\Omega}_1$ and $\widetilde{\Omega}_2$, respectively. Then we can improve the bound $M_i$ by replacing $\Omega_i$ by $\Omega_i\cap\widetilde{\Omega}_i$ in the assumption $(iv)$. For the proof of this improved result, we can follow the above argument line by line, but with $\Omega_i$ replaced by $\Omega_i\cap\widetilde{\Omega}_i$.
\end{remark}

The following lemma is useful in the interpolation with change of power weights. Its formulation and  proof are in many ways like the Stein-Weiss interpolation with change of measures; see Stein-Weiss \cite{stein-weiss},  Pan-Sampson-Szeptycki \cite{pansampson} and Shi-Yan \cite{ShiYan}.

\begin{lemma}\label{Lemma interpolaiton with change of weights}
Assume $T$ is a sublinear operator defined for simple functions in $\mathbb{R}$ with Lebesgue measure. Suppose that there exist two constants $A,B>0$ such that

\noindent $~$ {\rm(i)} $\| Tf \|_{L^{\infty}} \leq A \| f \|_{L^1}$ for all simple functions f;

\noindent $~$ {\rm (ii)} $\l\| |x|^a Tf  \r\|_{L^{p_0}} \leq B\| f \|_{L^{p_0}},$
where $a \in \mathbb{R}$, $1< p_0 <\infty$ and $a\neq -1/p_0$.

\noindent Then for all $0<\theta<1$, there exists a constant $C=C(a,p_0,\theta)$ such that
\begin{equation*}
\Big\| |x|^{\gamma} Tf \Big\|_{L^p}
\leq
CA^{1-\theta} B^{\theta} \|f\|_{L^p},~~~~~
\gamma=-(1-\theta)+\theta a,~~~\frac{1}{p}=(1-\theta)+\frac{\theta}{p_0},
\end{equation*}
where $f$ is an arbitrary simple function.
\end{lemma}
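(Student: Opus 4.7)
\noindent\emph{Proof proposal.}
The plan is to prove the lemma by deriving two weak-type bounds on the single sublinear operator $\widetilde T := |x|^{\gamma} T$ and then interpolating between them using a Marcinkiewicz argument, in the spirit of the Stein-Weiss interpolation with change of measures.

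The first weak-type bound is obtained from hypothesis (i) applied pointwise: $|\widetilde T f(x)| \leq A \|f\|_{L^1}\, |x|^{\gamma}$. Since $|x|^{\gamma}$ lies in the weak Lebesgue space $L^{q_0,\infty}(\bR)$ with $q_0 = -1/\gamma$ when $\gamma<0$ (as read directly from its distribution function), this yields
\[
\|\widetilde T f\|_{L^{q_0,\infty}} \leq C_0\, A\, \|f\|_{L^1}.
\]
The second weak-type bound comes from hypothesis (ii) combined with O'Neil's H\"older inequality in Lorentz spaces: writing $\widetilde T f = |x|^{\gamma - a}\cdot(|x|^{a} T f)$ and noting that $|x|^{\gamma - a} \in L^{s,\infty}$ with $s = 1/((1-\theta)(1+a))$ when $a > -1$, while $(|x|^{a} T f)\in L^{p_0}$ with norm $\leq B\|f\|_{L^{p_0}}$, we obtain
\[
\|\widetilde T f\|_{L^{r,\infty}} \leq C_1\, B\, \|f\|_{L^{p_0}}, \qquad 1/r = 1/s + 1/p_0 = (1-\theta)(1+a) + 1/p_0.
\]

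Applying Marcinkiewicz interpolation between these two weak-type bounds produces the desired $\widetilde T : L^p \to L^p$ strongly, with norm bounded by $C(a,p_0,\theta)\, A^{1-\theta} B^\theta$. The source exponents $1$ and $p_0$ interpolate to $1/p=(1-\theta)+\theta/p_0$ as required, and a short algebraic check shows $(1-\theta)/q_0 + \theta/r = 1/p$, so the target balances to $p$ as well at the interpolation parameter $\tau=\theta$. The main obstacle I expect is the non-degeneracy condition $q_0 \ne r$ required by Marcinkiewicz: a direct computation shows $q_0 = r$ precisely when $a = -1/p_0$, so the hypothesis $a \ne -1/p_0$ is exactly what makes the Marcinkiewicz step non-degenerate, and the constant $C(a,p_0,\theta)$ inherits a factor of the form $|a + 1/p_0|^{-c}$ that blows up at this critical value. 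Indeed $a = -1/p_0$ forces $\gamma = -1/p$, so that $\||x|^\gamma Tf\|_{L^p}$ would contain a non-integrable singularity at the origin for generic $Tf$. The remaining technicalities are the case analysis for the signs of $\gamma$ and $\gamma - a$, handled either by using the weak-Lebesgue placement of $|x|^\alpha$ at infinity when $\alpha \geq 0$, or by rephrasing the whole argument as Stein's complex interpolation of the analytic family $S_z f := |x|^{w(z)} T f$ with $w(z) = -(1-z) + z a$, for which both boundary bounds are immediate and the interior $z=\theta$ recovers $w(\theta)=\gamma$.
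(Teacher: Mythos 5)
Your plan is to interpolate two weak-type bounds for $\widetilde{T}=|x|^{\gamma}T$ in unweighted weak Lebesgue spaces $L^{q_0,\infty}(dx)$ and $L^{r,\infty}(dx)$. The paper does something subtly but importantly different: it keeps both target exponents fixed at $1$ and $p_0$, but changes the underlying measure, setting $Sf=|x|^{b}Tf$ with $d\mu=|x|^{c}dx$, and applies Marcinkiewicz with change of measure. The reason the paper's formulation is preferable is precisely the issue you flag at the end and do not resolve: when $b\neq 0$ the function $|x|^{b}$ always lies in $L^{1,\infty}(|x|^{-1-b}dx)$, regardless of the sign of $b$ (a short distribution-function computation, done in the paper), whereas $|x|^{\gamma}$ lies in $L^{q,\infty}(\bR,dx)$ for some finite $q$ \emph{only} when $\gamma<0$, and similarly $|x|^{\gamma-a}$ requires $\gamma-a<0$, i.e., $a>-1$. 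So your construction of the two weak endpoints genuinely breaks down whenever $\gamma\geq 0$ (equivalently $a\geq(1-\theta)/\theta$) or $a\leq -1$, both of which are permitted by the hypotheses ($a$ is an arbitrary real with $a\neq -1/p_0$). This is not merely a tidy-up technicality: the paper actually invokes the lemma in Section 5 with $\theta$ chosen so that $\gamma=0$, which falls outside the range your proof covers. (The $\gamma=0$ case alone can be saved by replacing the $L^{q_0,\infty}$ bound with the strong $L^1\to L^{\infty}$ bound from hypothesis (i), but $\gamma>0$ cannot.)

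Your two suggested repairs do not fill this gap. The "weak-Lebesgue placement of $|x|^{\alpha}$ at infinity when $\alpha\geq 0$" does not exist as a single space: for $\alpha>0$ the superlevel sets $\{|x|^{\alpha}>\lambda\}$ have infinite measure, so $|x|^{\alpha}\notin L^{q,\infty}(\bR,dx)$ for any finite $q$. The complex-interpolation alternative with $S_zf=|x|^{w(z)}Tf$ has the endpoint $z=0$ given by $w(0)=-1$, and the information available there is $|S_0f(x)|\leq A\|f\|_{L^1}/|x|$, which is only a weak-type $(1,1)$ bound, not a strong bound on a $L^{p}$-to-$L^{q}$ pair, so it does not directly fit Stein's analytic-family interpolation theorem either. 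The paper avoids all of these sign/finiteness headaches by putting the compensating singularity into the measure $d\mu=|x|^{c}dx$ rather than into a target exponent, solving the two linear equations $b+c=-1$ and $p_0b+c=p_0a$, and observing that the hypothesis $a\neq-1/p_0$ is exactly the condition $b\neq 0$ needed for $|x|^{b}\in L^{1,\infty}(d\mu)$.
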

\begin{proof}
By the assumption {\rm (i)}, it is easy to see that $|x|^{-1} T$ is bounded from $L^1(dx)$ into $L^{1,\infty}(dx)$. Since $|x|^a T$ is bounded on $L^{p_0}(dx)$, we can define an operator $Sf=|x|^bTf$ and a measure $d\mu=|x|^{c}dx$ such that $S$ is bounded from $L^1(dx)$ into $L^{1,\infty}(d\mu)$ and maps $L^{p_0}(dx)$ continuously into $L^{p_0}(d\mu)$.

Now we need the simple fact that $|x|^{b}$ belongs to $L^{1,\infty}(|x|^{-1-b}dx)$ for any nonzero real number $b$. For any $\lambda>0$, $|x|^b>\lambda$ implies $|x|>\lambda^{1/b}$. Hence
\begin{eqnarray*}
d\mu(\{x\in\bR: |x|^b>\lambda\})
=
\int_{|x|>\lambda^{1/b}} |x|^{-1-b}dx
=
C(b)\lambda^{-1}.
\end{eqnarray*}
Hence we obtain the claim for $b>0$. The claim for $b<0$ can be proved in the same way.

By the above fact, we see that $S$ is bounded from $L^1(dx)$ into $L^{1,\infty}(d\mu)$ provided that $b+c=-1$ and $b\neq 0$. On the other hand, using assumption {\rm (ii)}, we see that $S$ is bounded from $L^{p_0}(dx)$ into $L^{p_0}(d\mu)$ if $p_0b+c=p_0a$. With this two equations for $b$ and $c$, we obtain
\begin{equation*}
b=\frac{p_0a+1}{p_0-1},~~~~~
c=-1-\frac{p_0a+1}{p_0-1}.
\end{equation*}
Recall that we have assumed $a \neq -1/p_0$ in the assumption (ii). This implies $b\neq 0$. By the Marcinkiewicz interpolation theorem, we have a constant $C=C(a,p_0,\theta)$ such that
\begin{equation*}
\||x|^bTf\|_{L^p(|x|^cdx)}
\leq
C A^{1-\theta} B^{\theta} \|f\|_{L^p(dx)}
\end{equation*}
with $\frac{1}{p}=1-\theta+\frac{\theta}{p_0}$. Note that $$b+\frac{c}{p}=b+c\left(1-\theta+\frac{\theta}{p_0}\right)
=(1-\theta)(b+c)+\theta\left( b+\frac{c}{p_0} \right)=-(1-\theta)+\theta a.$$
Then the desired inequality in the lemma follows immediately.
\end{proof}

Now we introduce the notion of weighted homogeneous polynomials considered in this paper.
\begin{defn}\label{sec2 def of weighted homo poly}
A polynomial $Q(x,y)=\sum_{k,l\geq 0}a_{k,l}x^ky^l$ is said to be $(p,q)$-weighted homogeneous, if there exist two positive integers $p$ and $q$, relatively prime, such that $pk+ql$ is a constant independent of $k,l$ for which $a_{k,l}\neq0$.
\end{defn}

\begin{lemma}\label{sec2 factorization Rxy}
Assume $Q\in\bR[x,y]$, the ring of polynomials in $x$ and $y$ with real coefficients. If $Q$ is a nonzero $(p,q)$-weighted homogeneous, $p$ and $q$ relatively prime, then $Q$ can be factorized as follows:
\begin{equation*}
Q(x,y)=c x^m y^n \prod_{k=1}^{N_1}\l(x^q-\alpha_k y^p\r)
\prod_{l=1}^{N_2}\left(x^{2q}+2\beta_lx^qy^p+\gamma_l y^{2p}\right),
\end{equation*}
where $c\in \bR$, $\alpha_k\in\bR\backslash\{0\}$, and $\beta_l,\gamma_l\in\bR$ satisfy $\gamma_l>\beta_l^2$.
\end{lemma}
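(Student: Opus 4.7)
The plan is to reduce the factorization to the factorization of an ordinary homogeneous polynomial in two variables, and then to invoke the real fundamental theorem of algebra. The only real work lies in using the hypothesis $\gcd(p,q)=1$ to guarantee that every monomial in $Q$ has the form $x^{k_0+qj}y^{l_0-pj}$.

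First, I would extract the index pattern. Suppose $x^ky^l$ and $x^{k'}y^{l'}$ are two monomials appearing in $Q$ (i.e., with nonzero coefficient). Then $pk+ql=pk'+ql'=d$, so $p(k-k')=q(l'-l)$. Since $p$ and $q$ are coprime, $q\mid (k-k')$ and $p\mid(l'-l)$. Fix one monomial $x^{k_0}y^{l_0}$ with $k_0$ minimal among appearing monomials; then every other appearing monomial is $x^{k_0+qj}y^{l_0-pj}$ for some nonnegative integer $j$, and there is a largest such $j$, say $j=N$, because $Q$ has finite degree. Set $m:=k_0$ and $n:=l_0-pN$; these are nonnegative, and by our choice $j=0$ and $j=N$ both occur with nonzero coefficient.

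Next I would change variables. Write
\begin{equation*}
Q(x,y)=x^my^n\sum_{j=0}^{N}a_j\,x^{qj}y^{p(N-j)},
\end{equation*}
where $a_0\ne 0$ and $a_N\ne 0$, and set $u:=x^q$, $v:=y^p$. The inner sum becomes the ordinary homogeneous polynomial
\begin{equation*}
P(u,v)=\sum_{j=0}^{N}a_j\,u^j v^{N-j}
\end{equation*}
of degree $N$ in two variables. By the fundamental theorem of algebra applied to the one-variable polynomial $P(u,1)$, and after pairing complex conjugate roots, $P$ factors over $\mathbb{R}$ as
\begin{equation*}
P(u,v)=c\prod_{k=1}^{N_1}(u-\alpha_k v)\prod_{l=1}^{N_2}\bigl(u^2+2\beta_l uv+\gamma_l v^2\bigr)
\end{equation*}
with $c\in\mathbb{R}$, $\alpha_k\in\mathbb{R}$, and $\gamma_l>\beta_l^2$ for the irreducible quadratic factors. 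No factor of $u$ or $v$ appears as a linear factor because $a_0\ne0$ forces $P(0,v)\ne 0$ and $a_N\ne 0$ forces $P(u,0)\ne 0$; in particular every $\alpha_k$ is nonzero. Substituting $u=x^q$, $v=y^p$ and putting back the prefactor $x^m y^n$ yields exactly the factorization in the statement.

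The only point requiring mild care is the argument that no $\alpha_k$ vanishes, which I have reduced to the extremality of $k_0$ and of $k_0+qN$; apart from that, each step is a routine manipulation, so I do not expect a serious obstacle.
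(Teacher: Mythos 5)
Your proposal is correct and follows essentially the same route as the paper's own proof: pull out the monomial factor $x^my^n$, use the coprimality of $p$ and $q$ to see that the remaining exponents lie on the arithmetic progressions $k=qj$, $l=p(N-j)$, substitute $u=x^q$, $v=y^p$ to reduce to an ordinary degree-$N$ homogeneous polynomial, and then apply the real factorization theorem in one variable. The only cosmetic difference is that you parametrize the exponents directly by the shift $j$, whereas the paper first observes $pq\mid D$ and then deduces $q\mid k$, $p\mid l$; both are two lines and land in the same place.
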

\begin{proof}
If $Q$ is a monomial, then the above factorization is trivial. Assume now $Q$ contains at least two nonzero terms. Let $m$ be the least integer $k$ such that there exists a coefficient $a_{k,l}\neq 0$ for some $l$, and $n$ be the least integer $l$ such that there exists an $a_{k,l}\neq 0$ for some $k$. Then we can write $Q(x,y)=x^my^n\widetilde{Q}(x,y)$ for some $\widetilde{Q}\in\bR[x,y]$. Here $\widetilde{Q}$ satisfies $\widetilde{Q}(x,0)\neq 0$ and $\widetilde{Q}(0,y)\neq 0$. Note that $\widetilde{Q}$ is also $(p,q)$-weighted homogeneous. For this reason, we assume $m=n=0$ throughout our proof.

Write now $Q(x,y)=\sum_{k,l\geq 0}a_{k,l}x^ky^l$. Let $D:=pk+ql$ for $a_{k,l}\neq 0$. The assumption $m=n=0$ implies that $a_{k,0}\neq 0$ and $a_{0,l}\neq 0$ for two positive integers $k,l$. Hence $p|D$ and $q|D$. Since $p$ and $q$ are relatively prime, we have $pq|D$. It follows immediately that if $a_{k,l}\neq 0$ then $q|k$ and $p|l$. Set $u=x^q$ and $v=x^p$. Since $\frac{k}{q}+\frac{l}{p}=d:=\frac{D}{pq}$ for all $k,l$ such that $a_{k,l}\neq 0$, we see that $P(u,v):=Q(x,y)$ is a homogeneous polynomial in $u$ and $v$. Observe that $P(u,v)=v^dP(\frac{u}{v},1)$ and $P(0,1)\neq 0$. By the factorization theorem in $\bR[x]$, we have
$$
P(u,v)
=cv^d\prod_{k=1}^{N_1}\left(\frac{u}{v}-\alpha_k \right)
\prod_{l=1}^{N_2}\l( \frac{u^2}{v^2}+2\beta_l\frac{u}{v}+\gamma_l \r),
$$
where $c,\alpha_k\in\bR\backslash\{0\}$, and $\beta_l,\gamma_l$ are real numbers satisfying $\gamma_l>\beta_l^2$.
\end{proof}

\begin{remark}\label{sec2 remark on factorization}
Assume $Q\in\bR[x,y]$ is a nonzero $(p,q)$-weighted homogeneous polynomial as in the above lemma. Then we can decompose $Q$ in $\mathbb{C}[x,y]$ as
$$Q(x,y)=cx^my^n\prod_{k=1}^N(x-\alpha_ky^{\eta}),~~~~~
\eta=\frac{p}{q},$$
with $c\in\bR\backslash\{0\}$, $\alpha_k\in\mathbb{C}\backslash\{0\}$.
This factorization is conjugate invariant in the following sense:

\noindent$\bullet$ For each $k$ and each $q$-th root $\epsilon$ of the unit, the factors $x-\alpha_ky^{\eta}$, $x-\overline{\alpha}_ky^{\eta}$ and $x-\epsilon\alpha_ky^{\eta}$
appear in the above factorization with the same multiplicity, where $\overline{\alpha}_k$ is the complex conjugate of $\alpha_k$.
\end{remark}

\section{Damped Oscillatory Integral Operators on $L^2$}
Assume $S\in\bR[x,y]$ is a $(p,q)$-weighted homogeneous polynomial. Then $S$ is of the form (\ref{Definition Polynomial phase}) with $\eta=\frac{p}{q}$. Without loss of generality, we may assume the Hessian $S_{xy}''$ is nonzero. By Lemma \ref{sec2 factorization Rxy} and Remark \ref{sec2 remark on factorization}, $S_{xy}''$ can be factorized as
\begin{equation}\label{Hessian of phase}
S_{xy}''(x,y)=c_0x^my^n\prod\limits_{i=1}^N(x-\alpha_iy^{\eta})
\end{equation}
with $c_0\in \bR \backslash\{0\}$, $\alpha_1,\alpha_2,\cdots,\alpha_N\in\mathbb{C}\backslash\{0\}$ and $\eta>0$. Since $c_0$ can be absorbed in the parameter $\lambda$, we may assume $c_0=1$. By choosing $s$ indices $i_1<i_2<\cdots <i_s$, we define a damping factor $D$ of form
\begin{equation}\label{damping factor}
D(x,y)=x^m(x-\alpha_{i_1}y^{\eta})(x-\alpha_{i_2}y^{\eta})
\cdots (x-\alpha_{i_s}y^{\eta}).
\end{equation}
For our purpose, we shall further assume that this factorization is conjugate invariant in the sense of Remark \ref{sec2 remark on factorization}. On the other hand, we see that $D$ belongs to $\bR[x,y]$ if and only if $D$ is conjugate invariant.

Let $W_z$ be the damped oscillatory integral operator
\begin{equation}\label{damping OIO}
W_zf(x)=\int_{\bR}e^{i\lambda S(x,y)}|D(x,y)|^z\varphi(x,y)f(y)dy,
\end{equation}
where $z\in \mathbb{C}$ lies in an appropriate strip.
Now we shall give the main result in this section.

\begin{theorem}\label{Theorem L2 damping estimate}
Let $S,D\in\bR[x,y]$ be defined as above with $\eta\geq 1$. If $W_z$ is defined as in {\rm (\ref{damping OIO})}, then there exists a constant $C=C(deg(S),\varphi)$ such that
\begin{equation}\label{damping decay estimate}
\|W_zf\|_{L^2}
\leq
C(1+|z|^2)\l(|\lambda|\prod_{k\notin \{i_1,\cdots,i_s\}}|\alpha_k|\r)^{-\gamma}\|f\|_{L^2},
~~~\gamma=\frac{1}{2(n+(N-s){\eta}+1)}
\end{equation}
for $z\in \mathbb{C}$ with real part
\begin{equation}\label{L2 damping exponent}
\Re(z)=\frac{m+s-n-(N-s)\eta}{2(n+(N-s)\eta+1)}\cdot\frac{1}{m+s}.
\end{equation}
More precisely, the constant $C$ can take the following form:
\begin{equation}\label{uniform constant}
C(deg(S))
\mathop{\sup}_{\Omega}
\sum_{k=0}^{2}
\left( \big(\delta^{}_{\Omega,h}(x) \big)^k |\partial^k_y\varphi(x,y)|
+
\big( \delta^{}_{\Omega,v}(y) \big)^k |\partial^k_x\varphi(x,y)|\right)
\end{equation}
for all $\varphi$ supported in a horizontally and vertically convex domain $\Omega$. Here $C(deg(S))$ is a constant depending only on the degree of $S$.
\end{theorem}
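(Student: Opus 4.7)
The plan is to execute the standard dyadic decomposition strategy of Phong--Stein, taking care that all constants remain uniform in the roots $\alpha_k$. Split the Hessian as $S_{xy}''=D\cdot E$, where $E(x,y)=y^n\prod_{k\notin I}(x-\alpha_k y^{\eta})$ is the ``undamped'' half with $I=\{i_1,\dots,i_s\}$; the estimate (\ref{damping decay estimate}) is precisely what emerges when $|D|^{\Re z}$ plays the role of the missing damping needed to upgrade the van der Corput bound $(|\lambda|\,|D\cdot E|)^{-1/2}$ into $(|\lambda|\,|E|)^{-1/2}$.

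First I would dispose of complex roots. By the conjugate invariance imposed on (\ref{damping factor}), each nonreal pair $(x-\alpha_k y^{\eta})(x-\bar\alpha_k y^{\eta})$ is bounded above and below by constant multiples of $|\alpha_k|^2 y^{2\eta}$ on each $y$-dyadic annulus, so Lemma \ref{sec2 poly type func lemma}(ii) allows the corresponding factor (and its $z$-th power) to be absorbed into the cut-off at the cost of an $O((1+|z|)^2)$ constant. This reduces the problem to the case of real $\alpha_k$. Next I would decompose the support of $\varphi$ dyadically by $|y|\sim 2^{-l_0}$, $|x|\sim 2^{-j_0}$, and $|x-\alpha_k y^{\eta}|\sim|\alpha_k|\,2^{-j_k}$ for each remaining real root, using the root gaps $|\alpha_j-\alpha_k|$ to make the pieces essentially disjoint. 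After harmless subdivision each piece $R_{\vec{j},l_0}$ is a curved trapezoid on which $|S_{xy}''|$ and $|D|$ are constants $\mu_R$ and $\mu_{D,R}$ up to bounded factors, each expressible as an explicit monomial in $|\alpha_k|,2^{-j_k},2^{-l_0}$. On each such piece Lemma \ref{operator van der Corput} yields
\begin{equation*}
\|W_z^{R}\|_{L^2\to L^2}\;\lesssim\;(1+|z|)^2\,\mu_{D,R}^{\Re z}\,(|\lambda|\mu_R)^{-1/2},
\end{equation*}
the quadratic $z$-dependence arising from two applications of Lemma \ref{sec2 poly type func lemma}(ii) to $|D|^z$. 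The exponent (\ref{L2 damping exponent}) is chosen precisely so that, after substitution of the monomials, the exponents of $2^{-j_0}$ and of $2^{-j_{i_r}}$ ($r=1,\dots,s$) vanish in this single-scale bound, leaving a monomial only in the undamped variables.

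Finally, I would sum over the pieces. The sum over the undamped indices $j_k$ ($k\notin I$) and over $l_0$ is handled by splitting between $|\lambda|\mu_R\geq 1$ (van der Corput) and $|\lambda|\mu_R<1$ (trivial $L^2$ bound); the resulting geometric series collapses to $\bigl(|\lambda|\prod_{k\notin I}|\alpha_k|\bigr)^{-\gamma}$ with $\gamma=1/(2(n+(N-s)\eta+1))$, the denominator being the total weight of $E$ plus one for the $l_0$-integration. The sum over the damped indices and $j_0$, which is scale-invariant after the choice of $\Re z$, is controlled by almost orthogonality: non-adjacent pieces are disjoint in $x$ or $y$ and handled by Lemma \ref{simple almost orthogonality principle}, while adjacent pieces are treated by Lemma \ref{Lemma almost orth van der Corput}, whose $(|\lambda|\mu_R)^{-1}$ bound converts the scale-invariant sum into a bounded one after Cauchy--Schwarz. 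The hardest step is the combination of (a) keeping all constants uniform in the $|\alpha_k|$, which requires choosing the expansion factors of the domains $\Omega^*$ small relative to the root separations so that $\sup|S_{xy}''|/\inf|S_{xy}''|$ on every expanded piece is bounded only in terms of $\deg S$, and (b) the algebraic verification that (\ref{L2 damping exponent}) really does render the per-piece bound scale-invariant in the damped indices, so that the $(s+1)$-fold dyadic sum can be absorbed by almost orthogonality rather than producing a logarithmic loss.
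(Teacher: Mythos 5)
Your overall strategy is the right one and matches the paper's: dyadically decompose in $x$, $y$, and the root factors, apply Lemma~\ref{operator van der Corput} and size bounds on each piece, take a convex combination, and verify that the choice of $\Re(z)$ in~(\ref{L2 damping exponent}) kills the scale-dependence in the damped variables so that Lemma~\ref{Lemma almost orth van der Corput} can sum them. However, your disposal of complex roots is wrong in a way that matters. You claim that for a nonreal conjugate pair, on a $y$-dyadic annulus
\begin{equation*}
(x-\alpha_k y^{\eta})(x-\bar\alpha_k y^{\eta}) = (x-\Re(\alpha_k)y^{\eta})^2 + (\Im(\alpha_k))^2 y^{2\eta} \;\approx\; |\alpha_k|^2 y^{2\eta},
\end{equation*}
so the factor can be absorbed into the cut-off by Lemma~\ref{sec2 poly type func lemma}(ii). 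The upper bound is fine once $|x|\lesssim|\alpha_k|y^\eta$, but the lower bound fails whenever $|\Re(\alpha_k)|\gg|\Im(\alpha_k)|$: near $x=\Re(\alpha_k)y^\eta$ the product is only $\approx(\Im(\alpha_k))^2 y^{2\eta}$, which can be arbitrarily smaller than $|\alpha_k|^2 y^{2\eta}$ even with $(x,y)$ in a dyadic rectangle. Since two-sided comparability is exactly what Lemma~\ref{sec2 poly type func lemma}(ii) requires, and since such a factor may sit inside the Hessian, it cannot be discharged before the decomposition. The paper's Step 2 handles exactly this: it splits the complex roots in a cluster into those with $|\Re(\alpha_t)|\le|\Im(\alpha_t)|$ (which \emph{can} be absorbed) and those with $|\Re(\alpha_t)|>|\Im(\alpha_t)|$, and performs a genuine resolution of singularities in the variable $x-\Re(\alpha_t)y^\eta$ for the latter, with $|\Im(\alpha_t)|y^\eta$ acting only as a floor that is tracked through the convex combination.

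Secondly, the phrase ``after harmless subdivision each piece is a curved trapezoid'' conceals the most delicate part of the argument. When several roots cluster (the paper's Case~(iii), Subcase~(b)), the sets $\{|x-\alpha_k y^\eta|\sim|\alpha_k|2^{-j_k}\}$ for different $k$ interact badly, and producing pieces on which $|S''_{xy}|$ is comparable to a constant --- with the comparability constant depending only on $\deg S$ and $\eta$ and \emph{not} on the relative sizes $|\alpha_j/\alpha_k|$ --- requires an iterated decomposition into the three-tuples $\{G_{i,1},G_{i,2},G_{i,3}\}$. Both the termination of this iteration and the positivity of the almost-orthogonality exponent $\delta$ that it yields depend on structural facts about these tuples (e.g.\ $|G_{i+1,3}|\le|G_{i,3}|-1$, and $G_{w,2}\ne\emptyset$) that your sketch neither states nor proves, so the final step ``converts the scale-invariant sum into a bounded one'' is not established.
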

\begin{remark}\label{Remark Theorem L2 damping estimate}
The assumption $\eta\geq 1$ is crucial in our proof for technical reasons. If $\eta<1$, we shall instead change the role of $x$ and $y$. For earlier work related to damping estimates of oscillatory integral operators, we refer the reader to {\rm \cite{PS1997,pyang,ShiYan}} with non-uniform bounds.
\end{remark}
\begin{proof}
Choose a smooth nonnegative bump function $\Phi$ such that $\supp(\Phi)\subseteq [1/2,2]$ and $\sum_j\Phi(x/2^j)=1$ for $x>0$. Define $W_{j,k}$ as $W_z$ by insertion of $\Phi(x/2^j)\Phi(y/2^k)$ into the cut-off of $W_z$, i.e.,
\begin{equation}\label{definition of Ujk}
W_{j,k}f(x)=\int_{\bR}e^{i\lambda S(x,y)}|D(x,y)|^z\Phi(x/2^j)\Phi(y/2^k)\varphi(x,y)f(y)dy.
\end{equation}
Here we only consider the operator $W_z$ in the first quadrant. Estimates in other quadrants can be treated similarly. We assume $z$ has the real part in (\ref{L2 damping exponent}) and $|\alpha_1|\leq |\alpha_2|\leq \cdots \leq |\alpha_N|$ throughout this proof.
$~$\\

$\mathbf{Step~1.~~All~\alpha_{i_1},~\alpha_{i_2},~\cdots,~\alpha_{i_s}~ are~real~numbers.}$\\

We first prove the theorem under the additional assumption $\alpha_{i_t}\in\mathbb{R}$. One can see that our arguments are also applicable without essential change in presence of complex $\alpha_{i_t}$; see Step 2.\\

\textrm{$\mathbf{Case~(i)~~|\alpha_1|2^{(k-1)\eta}\geq 2^{j+2}}$}.\\

Assume first $m=0$. For fixed $k$, we define $W_k=\sum_{j}W_{j,k}$ with the summation taken over all $j$ satisfying $\mathbf{Case~(i)}$. Then $W_k$ is supported in the rectangle $R_k$: $|x|\leq |\alpha_1|2^{(k-1)\eta-1}$, $2^{k-1}\leq y \leq 2^{k+1}$. For $|k-l|\geq 2$, $W_k^{}W_l^{\ast}=0$. Thus it suffices to estimate $W_k^{\ast}W_l^{}$ to get almost orthogonality. Since $W_k^{\ast}W_l^{}$ and $W_l^{\ast}W_k^{}$ have the same operator norm on $L^2$, we may assume $k\geq l$. Observe that the Hessian of $S$ has uniform upper and lower bounds on an expanded rectangle $R_k^{\ast}$ of $R_k$. More precisely, we have
$$|S_{xy}''(x,y)|\approx 2^{kn}\l(\prod_{i=1}^N|\alpha_i|\r)2^{Nk{\eta}}$$
on the expanded rectangle $R_k^{\ast}:~|x|\leq |\alpha_1|2^{(k-1)\eta-1/2}, ~2^{k-1-\epsilon}\leq y \leq 2^{k+1+\epsilon}$ with $\epsilon>0$ satisfying $\epsilon\eta\leq \frac{1}{4}$. It is easily verified that $|x-\alpha_i y^{\eta}|\approx |\alpha_i|2^{k\eta}$ with bounds depending only on $\eta$. Moreover $S_{xy}''(x,y)$ does not change sign on all vertical line segments joining two points in $R_k^{\ast}$ and $R_l^{\ast}$. We can apply Lemma \ref{Lemma almost orth van der Corput} to get
\begin{equation*}
\|W_k^{\ast} W_l\|
\leq
C\l[  |\lambda|  2^{kn}\l(\prod_{i=1}^N|\alpha_i|\r)2^{Nk{\eta}}     \r]^{-1}
\l( 2^{ks\eta} \prod_{t=1}^{s}|\alpha_{i_t}|  \r)^{\Re(z)}
\l( 2^{ls\eta} \prod_{t=1}^{s}|\alpha_{i_t}| \r)^{\Re(z)}.
\end{equation*}
By the size estimate for each $W_k$, we deduce from $\|W_k^{\ast} W_l^{}\|\leq \|W_k^{\ast}\|\| W_l^{}\|$ that
\begin{equation*}
\|W_k^{\ast} W_l\|
\leq
C \l( 2^{ks\eta} \prod_{t=1}^{s}|\alpha_{i_t}|  \r)^{\Re(z)}
( |\alpha_1|  2^{k{\eta}} )^{1/2}  2^{k/2}
\l( 2^{ls\eta} \prod_{t=1}^{s}|\alpha_{i_t}| \r)^{\Re(z)}
( |\alpha_1|  2^{l\eta} )^{1/2}  2^{l/2}.
\end{equation*}
Taking a convex combination of the above two estimates, we obtain
\begin{eqnarray}\label{Proof WjWk estimate}
\|W_k^{\ast} W_l\|
& \leq &
C\l[  |\lambda|  2^{kn}\l(\prod_{i=1}^N|\alpha_i|\r)2^{Nk{\eta}}     \r]^{-\theta}
\l( 2^{ks\eta} \prod_{t=1}^{s}|\alpha_{i_t}|  \r)^{\Re(z)}
\l( 2^{ls\eta} \prod_{t=1}^{s}|\alpha_{i_t}| \r)^{\Re(z)} \nonumber \\
& &~~
( |\alpha_1|  2^{k{\eta}} )^{(1-\theta)/2}  2^{k(1-\theta)/2}
( |\alpha_1|  2^{l\eta} )^{(1-\theta)/2}  2^{l(1-\theta)/2}.
\end{eqnarray}
Let $\theta=2\gamma$. The following relation between $\theta$ and $\Re(z)$ as in (\ref{L2 damping exponent}) will be used frequently throughout the rest of the proof.
\begin{equation}\label{relation theta and gamma}
\Re(z)=\frac{\theta}{2}-\frac{1-\theta}{2(m+s)},
~~~~~~
1-\theta=\Big( n+(N-s)\eta \Big)\theta.
\end{equation}
We collect terms involving $\alpha_i$ in the above inequality and obtain
\begin{eqnarray*}
\l(  \prod_{i=1}^N|\alpha_i|  \r)^{-\theta}
\l(  \prod_{t=1}^{s}|\alpha_{i_t}|  \r)^{2\Re(z)}|\alpha_1|^{1-\theta}
&\leq &
\l(  \prod_{i=1}^N|\alpha_i|  \r)^{-\theta}
\l(  \prod_{t=1}^{s}|\alpha_{i_t}|  \r)^{2\Re(z)+(1-\theta)/s}\\
&\leq&
\l(  \prod_{k\notin\{i_1,\cdots,i_s\}}|\alpha_i|  \r)^{-\theta},
\end{eqnarray*}
where we have used the assumptions $|\alpha_1|\leq |\alpha_2|\leq \cdots |\alpha_N|$, $m=0$ and the equation (\ref{relation theta and gamma}) to obtain
$$2\Re(z)+\frac{1-\theta}{s}
=2\left(\theta-\frac{1-\theta}{2s} \right)
+
\frac{1-\theta}{s}
=\theta.$$
On the right side of (\ref{Proof WjWk estimate}), we add all exponents of $2^k$ and obtain
\begin{eqnarray*}
& &-(n+N{\eta})\theta+s\eta\Re(z)
+\frac{1-\theta}{2}\eta+\frac{1-\theta}{2}\\
&=&
-(1-\theta)-s\eta\theta+\frac{s\eta}{2}\theta
-\frac{1-\theta}{2}\eta
+\frac{1-\theta}{2}\eta
+\frac{1-\theta}{2}\\
&=&
-\frac{1-\theta}{2}-\frac{s\eta}{2}\theta.
\end{eqnarray*}
The exponent of $2^l$ in (\ref{Proof WjWk estimate}) equals
\begin{eqnarray*}
s\eta\Re(z)+\frac{1-\theta}{2}\eta+ \frac{1-\theta}2
&=&
s\eta\left( \frac{\theta}{2}-\frac{1-\theta}{2s} \right)
+\frac{1-\theta}{2}\eta
+\frac{1-\theta}{2}\\
&=&
\frac{s\eta}{2}\theta
+\frac{1-\theta}{2}
\end{eqnarray*}
Then the inequality (\ref{Proof WjWk estimate}) becomes
\begin{equation*}
\|W_k^{\ast} W_l\|
\leq
C (1+|z|^2) |\lambda|^{-\theta}\l( \prod_{k\notin\{i_1,\cdots,i_s\}} |\alpha_k|  \r)^{-\theta} 2^{-|k-l|\delta}
\end{equation*}
with $\theta=2\gamma$ and $\delta=\frac{s\eta}{2}\theta
+\frac{1-\theta}{2}>0$.

Now we address \textrm{$\mathbf{Case~(i)}$} with $m>0$. Observe that $W_{j,k}$ is supported in the rectangle $R_{j,k}$: $x\in [2^{j-1},2^{j+1}]$, $y\in [2^{k-1},2^{k+1}]$. For some $\epsilon>0$ satisfying $\epsilon\leq \frac{1}{4\eta}$, we define the expanded rectangle
$R^{\ast}_{j,k}:~2^{j-3/2}\leq x \leq 2^{j+3/2},~2^{k-1-\epsilon}\leq y \leq 2^{k+1+\epsilon}.$ Then we can verify that all assumptions in Lemma \ref{Lemma almost orth van der Corput} are true. Since $W_{j,k}^{}W_{j',k'}^{\ast}=0$ for $|k-k'|\geq 2$, we may assume $|k-k'|\leq 1$. By Lemma \ref{Lemma almost orth van der Corput}, $W_{j,k}^{}W_{j',k'}^{\ast}$ satisfies, assuming $j\geq j'$,
\begin{eqnarray*}
\|W_{j,k}^{}W_{j',k'}^{\ast}\|
&\leq&
C\l[ |\lambda| 2^{jm} 2^{kn} \l( \prod_{i=1}^N|\alpha_i| \r) 2^{kN{\eta}}  \r]^{-1}
\l[ 2^{jm} \l( \prod_{u=1}^s|\alpha_{i_u}| \r) 2^{ks\eta}   \r]^{\Re(z)}\times\\
& &~~~\l[ 2^{j'm} \l( \prod_{u=1}^s|\alpha_{i_u}| \r) 2^{k's\eta}   \r]^{\Re(z)}.
\end{eqnarray*}
Since $\|W_{j,k}^{}W_{j',k'}^{\ast}\| \leq  \|W_{j,k}\|  \|W_{j',k'}^{\ast}\|$, it follows from size estimates for each operator that
\begin{eqnarray*}
\|W_{j,k}^{}W_{j',k'}^{\ast}\|
&\lesssim&
\l[ 2^{jm} \l( \prod_{u=1}^s|\alpha_{i_u}| \r) 2^{ks\eta}   \r]^{\Re(z)}2^{j/2} 2^{k/2}
\l[ 2^{j'm} \l( \prod_{u=1}^s|\alpha_{i_u}| \r) 2^{k's\eta}   \r]^{\Re(z)} 2^{j'/2} 2^{k'/2}.
\end{eqnarray*}
We take a convex combination of the above two inequalities and obtain
\begin{eqnarray}\label{Oscillation esti WjkWjk}
\|W_{j,k}^{}W_{j',k'}^{\ast}\|
&\lesssim&
\l[ |\lambda| 2^{jm} 2^{kn} \l( \prod_{i=1}^N|\alpha_i| \r) 2^{kN{\eta}}  \r]^{-\theta}
\l[ 2^{jm} \l( \prod_{u=1}^s|\alpha_{i_u}| \r) 2^{ks\eta}   \r]^{\Re(z)}2^{j(1-\theta)/2} \times \nonumber \\
& &~2^{k(1-\theta)/2}
\l[ 2^{j'm} \l( \prod_{u=1}^s|\alpha_{i_u}| \r) 2^{k's\eta}   \r]^{\Re(z)} 2^{j'(1-\theta)/2} 2^{k'(1-\theta)/2}.
\end{eqnarray}
Put $\theta=2\gamma$. Since $|k-k'|\leq 1$, we can identify $k'$ with $k$ in this estimate. Then the exponent of $2^k$ is equal to
\begin{eqnarray*}
\tau
&:=&
-(n+N{\eta})\theta+2s\eta\Re(z)+1-\theta\\
&=&
-(1-\theta)-s\eta\theta
+2s\eta\left( \frac{\theta}{2}-\frac{1-\theta}{2(m+s)}\right)
+1-\theta\\
&=&
\frac{s\eta}{(m+s)}(1-\theta).
\end{eqnarray*}
Recall that $|\alpha_1|2^{(k-1)\eta}\geq 2^{j+2}$ in \textrm{$\mathbf{Case~(i)}$}. Inserting $2^{k\tau}$, $2^{k'\tau}\lesssim (|\alpha_1|^{-1}2^j)^{\tau/\eta}$ into (\ref{Oscillation esti WjkWjk}) and collecting terms concerning $\alpha_i$, we obtain
\begin{eqnarray*}
\l(  \prod_{i=1}^N|\alpha_i| \r)^{-\theta}
\l(  \prod_{u=1}^s|\alpha_{i_u}|  \r)^{2\Re(z)}
|\alpha_1|^{-\tau/\eta}
& \leq &
\l(  \prod_{i=1}^N|\alpha_i| \r)^{-\theta}
\l(  \prod_{u=1}^s|\alpha_{i_u}|  \r)^{2\Re(z)}
\l(  \prod_{u=1}^s|\alpha_{i_u}|  \r)^{-\frac{\tau}{s\eta}}\\
&\leq &
\l(  \prod_{i=1}^N|\alpha_i| \r)^{-\theta}
\l(  \prod_{u=1}^s|\alpha_{i_u}|  \r)^{\theta}\\
&\leq &
\l(  \prod_{k\notin \{i_1,i_2,\cdots,i_s\} }|\alpha_k| \r)^{-\theta},
\end{eqnarray*}
where we have used the equality
\begin{eqnarray*}
2\Re(z)-\frac{\tau}{s\eta}
&=&
\theta-\frac{1-\theta}{m+s}
+\frac{1-\theta}{m+s}
=\theta.
\end{eqnarray*}
The exponent of $2^j$ becomes, after inserting $2^{k\tau}$, $2^{k'\tau}\lesssim (|\alpha_1|^{-1}2^j)^{\tau/\eta}$ into (\ref{Oscillation esti WjkWjk}),
\begin{eqnarray*}
\frac{\tau}{\eta}-m\theta+m\Re(z)+\frac{1-\theta}{2}
&=&
-\frac{s}{m+s}(1-\theta)-m\theta+\frac{m}{2}\theta
-\frac{m}{2(m+s)}(1-\theta)
+\frac{1}{2}(1-\theta)\\
&=&
-\frac{s}{2(m+s)}(1-\theta)-\frac{m}{2}\theta.
\end{eqnarray*}
By direct calculation, we see that the exponent of $2^{j'}$ in (\ref{Oscillation esti WjkWjk}) is
\begin{eqnarray*}
m\Re(z)+\frac{1-\theta}{2}
&=&
\frac{m}{2}\theta
-\frac{m}{2(m+s)}(1-\theta)
+\frac{1-\theta}{2}\\
&=&
\frac{s}{2(m+s)}(1-\theta)+\frac{m}{2}\theta.
\end{eqnarray*}
Combining these estimates, we get
\begin{equation*}
\|W_{j,k}^{}W_{j',k'}^{\ast}\|
\lesssim (1+|z|^2)
|\lambda|^{-\theta}
\l(  \prod_{k\notin \{i_1,i_2,\cdots,i_s\}} |\alpha_k| \r)^{-\theta}
2^{-|j-j'|\delta}
\end{equation*}
with $\theta=2\gamma$ and $\delta$ defined by
$$\delta=\frac{s}{2(m+s)}\cdot(1-\theta)+\frac{m}{2}\cdot\theta>0.$$

By the same argument as above, we can show that $W_{j,k}^{\ast}W_{j',k'}^{}$ satisfies a similar estimate. We omit the details here.
$ $\\

\textrm{$\mathbf{Case~(ii)~~2^j \geq |\alpha_N|2^{(k+1)\eta+2}}$}.\\

As in \textrm{$\mathbf{Case~(i)}$}, the argument is slightly different depending on whether $n=0$. First consider the case $n=0$. For each $j$, define $W_j=\sum_{k}W_{j,k}$ with the summation taken over all $k$ satisfying the condition in \textrm{$\mathbf{Case~(ii)}$}. Then $W_j$ is supported in the rectangle $R_j:$ $2^{j-1}\leq x \leq 2^{j+1}$, $0\leq y \leq (|\alpha_N|^{-1}2^{j-2})^{1/\eta}$. Since $W_j^{\ast}W_{j'}^{}=0$ for $|j-j|\geq 2$, it is enough to estimate $W_j^{}W_{j'}^{\ast}$. Observe that $\|W_j^{}W_{j'}^{\ast}\|=\|W_{j'}^{}W_{j}^{\ast}\|$. We may assume $j\geq j'$ in the following proof.

It should be pointed out that the almost orthogonality estimate in Lemma \ref{Lemma almost orth van der Corput} is not applicable here since $D(x,y)$ may not be a polynomial type function in $y$ on $0\leq y \leq (|\alpha_N|^{-1}2^{j-2})^{1/\eta}$. However, if $s=N$ then $D$ is a polynomial in $\bR^2$. Assume first $s<N$. Since both $D(x,y)$ and $|D(x,y)|^z$ satisfy the estimate (\ref{sec2 esti of poly type func}) with respect to $x\in [2^{j-1},2^{j+1}]$ for $N_F=2$, we can apply Lemma \ref{operator van der Corput} to obtain the following oscillation estimate
\begin{equation*}
\|W_j\|
\leq
C(1+|z|^2) \left( |\lambda| 2^{jm} 2^{jN} \right)^{-1/2}
\left( 2^{jm} 2^{js} \right)^{\Re(z)}.
\end{equation*}
By the Schur test, we obtain the size estimate
\begin{equation*}
\|W_j\|
\leq
C \left( 2^{jm} 2^{js} \right)^{\Re(z)}
2^{j/2} \left( |\alpha_N|^{-1} 2^{j-2} \right)^{\frac{1}{2\eta}}.
\end{equation*}
In the oscillation estimate, the exponent of $2^j$ is
\begin{eqnarray*}
-\frac{m+N}{2}+(m+s)\Re(z)
&=&-\frac{m+N}{2}
+
(m+s)\l(\frac{\theta}{2}-\frac{1-\theta}{2(m+s)}\r)\\
&=&-\frac{m+N}{2}-\frac{1-\theta}{2}+(m+s)\frac{\theta}{2}<0
\end{eqnarray*}
with $\theta=2\gamma$. Here the last inequality holds since $s<N$ and $0\leq \theta \leq 1$. On the other hand, the exponent of $2^j$ in the size estimate is
\begin{eqnarray*}
(m+s)\Re(z)+\frac{1}{2}+\frac{1}{2\eta}
&=&
(m+s)\left(\frac{\theta}{2}-\frac{1-\theta}{2(m+s)} \right)
+\frac{1}{2}+\frac{1}{2\eta}\\
&=&
(m+s)\frac{\theta}{2}+\frac{\theta}{2}+\frac{1}{2\eta}>0.
\end{eqnarray*}
Balancing the oscillation and size estimates, we will obtain
\begin{equation*}
\Big\| \sum_jW_j \Big\|
\leq
\sum_j\Big\| W_j \Big\|
\leq
C(1+|z|^2)\left( |\lambda| \prod_{k\notin\{i_1,i_2,\cdots,i_s\} } |\alpha_k|\right)^{-\gamma}.
\end{equation*}

Now we turn to the case when $D(x,y)$ is a polynomial type function in $y$ on $[0,(|\alpha_N|^{-1}2^{j-2})^{1/\eta}]$; for example $s=N$. One can also verify the assumptions in Lemma \ref{Lemma almost orth van der Corput}. Hence $W_j^{}W_{j'}^{\ast}$ satisfies
\begin{eqnarray*}
\|W_j^{}W_{j'}^{\ast}\|
&\leq&
C\Big[ |\lambda|  2^{jm}  2^{jN} \Big]^{-1}
\Big( 2^{jm} 2^{js} \Big)^{\Re(z)} \Big(  2^{j'm} 2^{j's}  \Big)^{\Re(z)}.
\end{eqnarray*}
By the Schur test, $\|W_j^{}W_{j'}^{\ast}\|$ is bounded by
\begin{eqnarray*}
\|W_j^{}\|  \|W_{j'}\|
\lesssim
\l[ \Big(  2^{jm} 2^{js}  \Big)^{\Re(z)} 2^{j/2} \Big(|\alpha_N|^{-1}2^{j-2}  \Big)^{\frac1{2\eta}} \r]
\l[  \Big(  2^{j'm} 2^{j's}  \Big)^{\Re(z)} 2^{j'/2}
\Big(  |\alpha_N|^{-1}2^{j'-2}  \Big)^{\frac1{2\eta}} \r].
\end{eqnarray*}
A convex combination of these two estimates yields, setting $\theta=2\gamma$,
\begin{eqnarray}\label{Estimate WjWj}
\|W_j^{}W_{j'}^{\ast}\|
& \lesssim &
\Big[ |\lambda|  2^{jm}  2^{jN} \Big]^{-\theta}
\Big(  2^{jm} 2^{js}  \Big)^{\Re(z)} \Big(  2^{j'm} 2^{j's} \Big)^{\Re(z)}\times
\nonumber  \\
& &
~2^{j(1-\theta)/2} \Big(  |\alpha_N|^{-1}2^{j-2}  \Big)^{\frac{1-\theta}{2\eta}}
2^{j'(1-\theta)/2} \Big(  |\alpha_N|^{-1}2^{j'-2}  \Big)^{\frac{1-\theta}{2\eta}}.
\end{eqnarray}
Recall that $n=0$. In the above estimate, the exponent of $2^j$ equals
\begin{eqnarray*}
& &-(m+N)\theta+(m+s)\Re(z)+\frac{1-\theta}{2}+\frac{1-\theta}{2\eta}\\
&=&
-(m+N)\theta
+\frac{m+s}{2}\theta
-\frac{1}{2}(1-\theta)
+\frac{1}{2}(1-\theta)
+\frac{1}{2\eta}(1-\theta)\\
&=&
-(m+N)\theta
+\frac{m+s}{2}\theta
+\frac{N-s}{2}\theta\\
&=&
-\frac{m+N}{2}\theta,
\end{eqnarray*}
and the exponent of $2^{j'}$ is given by
\begin{eqnarray*}
(m+s)\Re(z)+\frac{1-\theta}{2}+\frac{1-\theta}{2\eta}
&=&
(m+s)\left( \frac{\theta}{2}-\frac{1-\theta}{2(m+s)}\right)
+\frac{1-\theta}{2}+\frac{1-\theta}{2\eta}\\
&=&
\frac{m+N}{2}\theta.
\end{eqnarray*}
On the other hand, we see that $(1-\theta)/2\eta=(N-s)\theta$ and \begin{equation*}
|\alpha_N|^{-\frac{1-\theta}{2\eta}}\leq \l(\prod_{k\notin \{i_1,\cdots,i_s\}} |\alpha_k|\r)^{-\theta}.
\end{equation*}
All of these results imply that
\begin{eqnarray*}
\|W_j^{}W_{j'}^{\ast}\|
& \leq &
C (1+|z|^2)|\lambda|^{-\theta} \l( \prod_{k\notin \{i_1,i_2,\cdots,i_s\}} |\alpha_k| \r)^{-\theta} 2^{-|j-j'|\delta}
\end{eqnarray*}
with $\delta=\frac{m+N}{2[(N-s)\eta+1]}\geq 0$. If $\delta=0$, then $m=N=0$. Recall that we assumed $n=0$. This implies that $S_{xy}''=1$ and the $L^2$ estimate (\ref{damping decay estimate}) is a consequence of Plancherel's theorem. Here we need only to consider the case $\delta>0$. Then the desired estimate in \textrm{$\mathbf{Case~(ii)}$} follows from the almost orthogonality principle.

Now we turn to the estimate for $n>0$ in \textrm{$\mathbf{Case~(ii)}$}. For $|k-k'|\geq 2$, $W_{j,k}^{}W_{j',k'}^{\ast}=0$. Assume $|k-k'|\leq 1$. The operator $W_{j,k}$ is supported in the rectangle $R_{j,k}$: $2^{j-1}\leq x \leq 2^{j+1}$, $2^{k-1}\leq y \leq 2^{k+1}$. One can verify that $S_{xy}''$ is comparable to a fixed value on an expanded region
$R_{j,k}^{\ast}$ which can be defined as in $\mathbf{Case~(i)}$. Other assumptions in Lemma \ref{Lemma almost orth van der Corput} are also true.

Assume $j\geq j'$. By Lemma \ref{Lemma almost orth van der Corput}, there holds
\begin{equation*}
\|W_{j,k}^{}W_{j',k'}^{\ast}\|
\leq
C \l[ |\lambda| 2^{jm} 2^{kn} 2^{jN}  \r]^{-1}
\l[ 2^{jm} 2^{js} \r]^{\Re(z)} \l[ 2^{j'm} 2^{j's}  \r]^{\Re)(z)}.
\end{equation*}
The size estimate is, using $\|W_{j,k}^{}W_{j',k'}^{\ast}\|\leq \|W_{j,k}\|\|W_{j',k'}\|$,
\begin{equation*}
\|W_{j,k}^{}W_{j',k'}^{\ast}\|
\leq
C \l[ 2^{jm} 2^{js} \r]^{\Re(z)} 2^{j/2} 2^{k/2}
\l[ 2^{j'm} 2^{j's}  \r]^{\Re)(z)} 2^{j'/2} 2^{k'/2}.
\end{equation*}
We take a convex combination with $\theta=2\gamma$ and obtain
\begin{eqnarray*}
\|W_{j,k}^{}W_{j',k'}^{\ast}\|
& \lesssim &
\l[ |\lambda| 2^{jm} 2^{kn} 2^{jN}  \r]^{-\theta}
\l[ 2^{jm} 2^{js} \r]^{\Re(z)} 2^{j(1-\theta)/2} 2^{k(1-\theta)/2} \times \\
& & \l[ 2^{j'm} 2^{j's}  \r]^{\Re(z)} 2^{j'(1-\theta)/2} 2^{k'(1-\theta)/2}.
\end{eqnarray*}
All terms and their exponents are given as follows:
\begin{equation*}
2^j:~~-(m+N)\theta+(m+s)\Re(z)+\frac{1-\theta}{2},~~~~~~~
2^{j'}:~~(m+s)\Re(z)+\frac{1-\theta}{2};
\end{equation*}
\begin{equation*}
2^{k}:~~-n\theta+\frac{1-\theta}{2},~~~~~~~
2^{k'}:~~\frac{1-\theta}{2}.
\end{equation*}
Since $|k-k'|\leq 1$ and $2^k$, $2^{k'}\lesssim (|\alpha_N|^{-1}2^{j'})^{1/\eta}$, the product of two terms involving $2^k$ and $2^{k'}$ is bounded by $C(|\alpha_N|^{-1}2^j)^{(-n\theta+1-\theta)/\eta}$. Then the new exponent of $2^{j'}$ becomes
\begin{eqnarray*}
& &(m+s)\Re(z)+\frac{1-\theta}{2}+\frac{1}{\eta}\l( -n\theta +1-\theta \r)\\
&=& \frac{(m+s)}{2}\theta-\frac{1-\theta}{2}+\frac{1-\theta}{2}
-\frac{n}{\eta}\theta+\frac{n}{\eta}\theta+(N-s)\theta\\
&=& \l(  N+\frac{m}{2}-\frac{s}{2}  \r)\theta.
\end{eqnarray*}
To obtain the desired almost orthogonality, we shall verify the exponent of $2^j$ equals $-(N+\frac{m}{2}-s)\theta$. In fact, this is true. By direct computation,
\begin{eqnarray*}
& &-(m+N)\theta+(m+s)\Re(z)+\frac{1-\theta}{2}\\
&=&-(m+N)\theta+(m+s)\frac{\theta}{2}-\frac{1-\theta}{2}+\frac{1-\theta}{2}\\
&=&-\l(  N+\frac{m}{2}-\frac{s}{2}  \r)\theta.
\end{eqnarray*}
On the other hand, the bound concerning $\alpha_i$ is equal to $(|\alpha_N|^{-1})^{(-n\theta+1-\theta)/\eta}$. Observe that $(-n\theta+1-\theta)/\eta=(N-s)\theta$. By the assumption $|\alpha_1|\leq \cdots \leq |\alpha_N|$, we see that
$(|\alpha_N|^{-1})^{(-n\theta+1-\theta)/\eta}$ is less than or equal to $\l(\prod_{k\notin \{i_1,\cdots,i_s\}}|\alpha_k|\r)^{-\theta}.$ Combining this fact with above results, we obtain
\begin{equation*}
\|W_{j,k}^{}W_{j',k'}^{\ast}\|
\leq
C\l(|\lambda| \prod_{k\notin \{i_1,\cdots,i_k\}} |\alpha_k|\r)^{-\theta}
2^{-|k-k'|\delta}
\end{equation*}
with $\delta$ given by
\begin{equation*}
\delta=\l(  N+\frac{m}{2}-\frac{s}{2}  \r)\theta.
\end{equation*}

\noindent A similar argument also shows that $W_{j,k}^{\ast}W_{j',k'}^{}$ satisfies the same estimate.\\

\textrm{$\mathbf{Case~(iii)~~|\alpha_1|2^{(k-1)\eta-2}< 2^j < |\alpha_N|2^{(k+1)\eta+2}}$}.\\

For some large number $N_0=N_0(\eta)>0$, consider the intervals $I_i=(|\alpha_i|2^{-2N_0},|\alpha_i|2^{2N_0})$ with $1\leq i \leq N$. Since $|\alpha_1|\leq |\alpha_2|\leq \cdots |\alpha_N|$, we see that $I_i\cap I_{i+1}=\emptyset$ if and only if $|\alpha_{i+1}|/|\alpha_{i}|\geq 2^{4N_0}$.
For clarity, define
\begin{equation}\label{sec4 exceptional index set}
\Big\{t_1,t_2,\cdots,t_a \Big\}:=\Big\{1\leq i \leq N: |\alpha_{i+1}|/|\alpha_{i}|\geq 2^{4N_0} \Big\}.
\end{equation}

\noindent Of course, this set may very well be empty. We shall divide our proof into two subcases.\\

\textrm{$\mathbf{Subcase~(a)~~|\alpha_{t_r}|2^{(k+1)\eta+2}\leq 2^j \leq |\alpha_{t_r+1}|2^{(k-1)\eta-2}~for~some~1\leq r \leq a.}$}\\

Define $\Xi_1=\{1,2,\cdots,t_r\}$ and $\Xi_2=\{t_r+1,t_r+2,\cdots,N\}$. Then on the support of $W_{j,k}$, $|S_{xy}''(x,y)|$ is comparable to
\begin{eqnarray*}
|S_{xy}''(x,y)|
&\approx&
2^{jm} 2^{kn} \l(  \prod_{t\in \Xi_1} 2^j \r)
\l(  \prod_{t\in  \Xi_2} |\alpha_t|2^{k{\eta}} \r)
\end{eqnarray*}
and the damping factor $D(x,y)$ has size
\begin{eqnarray*}
|D(x,y)|
&\approx&
2^{jm} \l(  \prod_{t\in \Xi_1 \cap \Theta  } 2^j \r)
\l(  \prod_{t\in  \Xi_2 \cap \Theta  } |\alpha_t|2^{k{\eta}} \r)
\end{eqnarray*}
where $\Theta=\{i_1,i_2,\cdots,i_s\}$.
Since $W_{j,k}^{}W_{j',k'}^{\ast}=0$ for $|k-k'|\geq 2$, we assume now $|k-k'|\leq 1$ and $j\geq j'$ without loss of generality. By Lemma \ref{Lemma almost orth van der Corput}, there holds
\begin{eqnarray*}
\|W_{j,k}^{}W_{j',k'}^{\ast}\|
&\lesssim &\l[ |\lambda| 2^{jm} 2^{kn}  2^{j|\Xi_1|}
\l(  \prod_{t\in  \Xi_2} |\alpha_t|2^{k\eta} \r)  \r]^{-1}
\l[  2^{jm} 2^{j|\Xi_1\cap\Theta|}
\l(  \prod_{t\in  \Xi_2 \cap \Theta  } |\alpha_t|2^{k\eta} \r)  \r]^{\Re(z)} \times \\
& &~\l[  2^{j'm} 2^{j'|\Xi_1\cap\Theta|}
\l(  \prod_{t\in  \Xi_2 \cap \Theta  } |\alpha_t|2^{k'\eta} \r)  \r]^{\Re(z)}.
\end{eqnarray*}
By the size estimate of each $W_{j,k}$, we have, using $\|W_{j,k}^{}W_{j',k'}^{\ast}\|\leq \|W_{j,k}^{}\| \|W_{j',k'}^{\ast}\|$,
\begin{eqnarray*}
\|W_{j,k}^{}W_{j',k'}^{\ast}\|
&\lesssim&
\l[  2^{jm} 2^{j|\Xi_1\cap\Theta|}
\l(  \prod_{t\in  \Xi_2 \cap \Theta  } |\alpha_t|2^{k\eta} \r)  \r]^{\Re(z)} 2^{j/2} 2^{k/2}  \times \\
& &\l[  2^{j'm} 2^{j'|\Xi_1\cap\Theta|}
\l(  \prod_{t\in  \Xi_2 \cap \Theta  } |\alpha_t|2^{k'\eta} \r)  \r]^{\Re(z)} 2^{j'/2} 2^{k'/2}.
\end{eqnarray*}
With $\theta=2\gamma$, a convex combination of the above oscillation and size estimates yields
\begin{equation}\label{Estiamte UjkUjk}
\|W_{j,k}^{}W_{j',k'}^{\ast}\|
\leq
C |\lambda|^{-\theta}2^{ja_j} 2^{kb_k} 2^{ j'a'_{j'} } 2^{ k' b'_{k'}  },
\end{equation}
where the above exponents are given as follows:
\begin{eqnarray*}
& &a_j=
-(m+|\Xi_1|)\theta+(m+|\Xi_1\cap\Theta|)\Re(z)
+\frac{1-\theta}{2},\\
& &a'_{j'}=
(m+|\Xi_1\cap\Theta|)\Re(z)+\frac{1-\theta}{2},\\
& &b_k=
-n\theta-|\Xi_2|\theta \eta+|\Xi_2\cap\Theta|\Re(z)\eta+\frac{1-\theta}{2},\\
& &b'_{k'}=
|\Xi_2\cap\Theta|\Re(z)\eta+\frac{1-\theta}{2}.
\end{eqnarray*}
By the assumption $|k-k'|\leq 1$, we see that
\begin{equation}\label{Bound 2kbk2kbk}
2^{kb_k}2^{k'b'_{k'}}\leq C 2^{k(b_k+b_{k'}')}
=
C2^{k[2|\Theta\cap\Xi_2|\Re(z)\eta+1-\theta-|\Xi_2|\eta\theta
-n\theta]}.
\end{equation}
Define $s_1=|\Xi_1\cap\Theta|$ and $s_2=|\Xi_2\cap\Theta|$. Then $s=s_1+s_2$. Using $1-\theta=[n+(N-s)\eta]\theta$,
\begin{eqnarray*}
2|\Theta\cap\Xi_2|\Re(z)\eta+1-\theta-|\Xi_2|\eta\theta-n\theta
&=&
2s_2\eta\Re(z)+(1-\theta)-(N-t_r)\eta\theta-n\theta\\
&=&
2s_2\eta\Re(z)+(1-\theta)-(N-s)\eta\theta-(s-t_r)\eta\theta-n\theta\\
&=&
2s_2\eta\Re(z)+(t_r-s)\eta\theta\\
&=&
-\eta(\theta-2\Re(z))s_2+(t_r-s_1)\eta\theta.
\end{eqnarray*}
Since $|\alpha_{t_r}|2^{k\eta}\lesssim 2^j, 2^{j'} \lesssim |\alpha_{t_r+1}|2^{k\eta}$ and $|k-k'|\leq 1$, the right side of (\ref{Bound 2kbk2kbk}) can be divided into two terms, and each term can treated as follows:
\begin{eqnarray}\label{Bound for 2j and 2k}
2^{k(t_r-s_1)\eta\theta}
\lesssim
\l( \frac{1}{|\alpha_{t_r}|} 2^{j'} \r)^{(t_r-s_1)\theta}~~~\textrm{and}~~~~
2^{-k\eta(\theta-2\Re(z))s_2}
\lesssim
\l( \frac{1}{|\alpha_{t_{r}+1}|} 2^{j} \r)^{-(\theta-2\Re(z))s_2}.
\end{eqnarray}
Substituting these estimates into (\ref{Estiamte UjkUjk}) and using $\Re(z)=\frac{\theta}{2}-\frac{1}{2(m+s)}(1-\theta)$, we see that the new exponent of $2^j$ is
\begin{eqnarray*}
-(\theta-2\Re(z))s_2+a_j
&=& -(\theta-2\Re(z))s_2-(m+t_r)\theta+(m+s_1)\Re(z)
+\frac{1-\theta}{2}\\
&=&
-\frac{s_2(1-\theta)}{m+s}-(m+t_r)\theta+
\frac{m+s_1}{2}\theta-\frac{m+s_1}{2(m+s)}(1-\theta)
+\frac{1-\theta}{2}\\
&=&
-\left( \frac{m}{2}+t_r-\frac{s_1}{2} \right)\theta
-\frac{s_2}{2(m+s)}(1-\theta).
\end{eqnarray*}
With insertion of (\ref{Bound for 2j and 2k}) into (\ref{Estiamte UjkUjk}), it is clear that the new exponent of $2^{j'}$ is
\begin{eqnarray*}
(t_r-s_1)\theta+a_{j'}'
&=&
(t_r-s_1)\theta+\frac{m+s_1}{2}\theta
-\frac{m+s_1}{2(m+s)}(1-\theta)
+\frac{1-\theta}{2}\\
&=&
\left( \frac{m}{2}+t_r-\frac{s_1}{2}  \right)\theta
+\frac{s_2}{2(m+s)}(1-\theta).
\end{eqnarray*}
Combining the above estimates, we obtain
\begin{equation*}
\|W_{j,k}^{}W_{j',k'}^{\ast}\|
\leq
C(1+|z|^2)^2|\lambda|^{-\theta} 2^{-|j-j'|\delta}
\end{equation*}
with $\delta$ given by
\begin{eqnarray*}
\delta
&=&
\left( \frac{m}{2}+t_r-\frac{s_1}{2}  \right)\theta
+\frac{s_2}{2(m+s)}(1-\theta).
\end{eqnarray*}
Note that if $\{t_1,\cdots,t_a\}=\emptyset$ then we need only to consider $\mathbf{Subcase}$ (b) below. Now $t_r\geq 1$. It follows immediately that $\delta>0$. The constant $C$ is bounded by a constant multiple of $(\prod_{k\notin\Theta}|\alpha_k|)^{-\theta}$. Its dependence on $\alpha_i$ comes from (\ref{Estiamte UjkUjk}) and (\ref{Bound for 2j and 2k}), and we can verify this
claim as follows.
\begin{eqnarray*}
\l( \prod_{\Xi_2} |\alpha_t| \r)^{-\theta}
\l( \prod_{\Xi_2\cap\Theta} |\alpha_t| \r)^{2\Re(z)}
\l( \frac{1}{|\alpha_{t_r}|}  \r)^{(t_r-s_1)\theta}
\l( \frac{1}{|\alpha_{t_{r+1}}|} \r)^{-(\theta-2\Re(z))s_2}
\leq
\l(  \prod_{t\notin\Theta}|\alpha_t|  \r)^{-\theta}
\end{eqnarray*}
where we have used the following two inequalities:
$$|\alpha_{t_r}|^{-(t_r-s_1)\theta}
\leq
\left( \prod_{t\notin \Xi_1\cap\Theta} |\alpha_t| \right)^{-\theta}
~~{\textrm {and}}~~~
|\alpha_{t_{r}+1}|^{(\theta-2\Re(z))s_2}
\leq \left(\prod_{t\in \Xi_2\cap\Theta} |\alpha_t|\right)^{\theta-2\Re(z)}.$$
Therefore we have obtained the desired estimate in \textrm{$\mathbf{Subcase~(a)}$}.\\

\textrm{$\mathbf{Subcase~(b)~~|\alpha_{t_r+1}|2^{(k-1)\eta-2}< 2^j < |\alpha_{t_{r+1}}|2^{(k+1)\eta+2}~for~some~0\leq r \leq a.}$}\\
Here $t_0=0$. Let $G_0=\{t_r+1,t_r+2,\cdots,t_{r+1}\}$ and $\Theta=\{i_1,i_2,\cdots,i_s\}$. Choose $e_0\in G_0$ arbitrarily. For clarity, set $e_0=t_r+1$, the least integer in $G_0$. Define $W_{j,k,l_0}^{\sigma_0}$ as $W_{j,k}$, but with insertion of $\Phi\l( \sigma_0 \frac{x-\alpha_{e_0}y^{\eta}}{ 2^{l_0} } \r)$ into the cut-off of $W_{j,k}$. Here $\sigma_0$ takes either $+$ or $-$.

Note that $|\alpha_{t_{r+1}}|/|\alpha_{t_r+1}|\leq 2^{4N_0}$. By Lemma \ref{simple almost orthogonality principle}, there exists a constant $C$, depending on $N_0$, such that
\begin{equation*}
\Big\| \sum W_{j,k}  \Big\|
\leq
C \mathop{\sup}_{j,k} \|W_{j,k}\|,
\end{equation*}
where both the summation and the supremum are taken over all $(j,k)$ satisfying \textrm{$\mathbf{Subcase~(b)}$}. Associated with fixed $j,k,l_0$, we can decompose $G_0$ into the following three subsets:
\begin{eqnarray*}
G_{1,1}
&=&\{ i\in G_0 \mid  |\alpha_i-\alpha_{e_0}|2^{k\eta}\geq 2^{l_0+N_0}  \}\\
G_{1,2}
&=&\{ i\in G_0 \mid  |\alpha_i-\alpha_{e_0}|2^{k\eta}\leq 2^{l_0-N_0}  \}\\
G_{1,3}
&=&\{ i\in G_0 \mid  2^{l_0-N_0}<|\alpha_i-\alpha_{e_0}|2^{k\eta}< 2^{l_0+N_0}  \}.
\end{eqnarray*}
It is clear that $e_0\in G_{1,2}$. If $G_{1,3}$ is empty, then our decomposition is finished. Otherwise, choose the least number $e_1$ in $G_{1,3}$, and define $W_{j,k,l_0,l_1}^{\sigma_0,\sigma_1}$ as $W_{j,k,l_0}^{\sigma_0}$ by inserting $\Phi\l( \sigma_1\frac{ x-\alpha_{e_1}y^{\eta} }{ 2^{l_1} } \r)$ into the cut-off of $W_{j,k,l_0}^{\sigma_0}$, where $\sigma_1=+$ or $-$. Since the number of $l_0$ satisfying $|\alpha_{e_1}-\alpha_{e_0}|2^{k\eta}\approx 2^{l_0}$ is bounded by a constant $C(N_0)$, there exists a constant $C$, depending only on $N_0$, such that
\begin{equation*}
\Big\| \sum_{l_0} W_{j,k,l_0}^{\sigma_0}  \Big\|
\leq
C \mathop{\sup}\limits_{l_0}
\l\| W_{j,k,l_0}^{\sigma_0}  \r\|,
\end{equation*}
where $l_0$ satisfies the restriction in $G_{1,3}$. Further decompose $G_{1,3}$ as follows:
\begin{eqnarray*}
G_{2,1}
&=&\{ i\in G_{1,3} \mid  |\alpha_i-\alpha_{e_1}|2^{k\eta}\geq 2^{l_1+N_0}  \}\\
G_{2,2}
&=&\{ i\in G_{1,3} \mid  |\alpha_i-\alpha_{e_1}|2^{k\eta}\leq 2^{l_1-N_0}  \}\\
G_{2,3}
&=&\{ i\in G_{1,3} \mid  2^{l_1-N_0}<|\alpha_i-\alpha_{e_1}|2^{k\eta}< 2^{l_1+N_0}  \}.
\end{eqnarray*}
Then $e_1\in G_{2,2}$. If $G_{2,3}$ is nonempty, we continue this decomposition procedure and obtain three subsets $G_{3,1}$, $G_{3,2}$, $G_{3,3}$ of $G_{2,3}$.

Generally, if $G_{i,1},G_{i,2},G_{i,3}$ are given with $G_{i,3}\neq \emptyset$, then we shall choose the least integer $e_i$ in $G_{i,3}$ and decompose $G_{i,3}$, for each fixed $l_i$, as follows:
\begin{eqnarray*}
G_{i+1,1}
&=&\{ t\in G_{i,3} \mid  |\alpha_t-\alpha_{e_i}|2^{k\eta}\geq 2^{l_i+N_0}  \}\\
G_{i+1,2}
&=&\{ t\in G_{i,3} \mid  |\alpha_t-\alpha_{e_i}|2^{k\eta}\leq 2^{l_i-N_0}  \}\\
G_{i+1,3}
&=&\{ t\in G_{i,3} \mid  2^{l_i-N_0}<|\alpha_t-\alpha_{e_i}|2^{k\eta}< 2^{l_i+N_0}  \}.
\end{eqnarray*}
Then we obtain three disjoint subsets $G_{i+1,1},G_{i+1,2},G_{i+1,3}$ of $G_{i,3}$. This process will continue until $G_{w,3}=\emptyset$ for some $w$. We shall prove later that this process will terminate in finite steps.

Although our construction of $G_{i,1},G_{i,2},G_{i,3}$ depends on $l_{i-1}$, it is more convenient to regard $\{G_{i,1},G_{i,2},G_{i,3}\}$ as a three-tuple satisfying three properties:\\

(i)~$G_{i,u}\bigcap G_{i,v}=\emptyset$ for $u\neq v$;\\

(ii) if $G_{i,3}$ is nonempty then so is $G_{i+1,2}$;\\

(iii)~$G_{i-1,3}=G_{i,1} \cup G_{i,2} \cup G_{i,3}$ with $G_{0,3}=G_0=\{t_r+1,t_r+2,\cdots,t_{r+1}\}$.\\

\noindent{These} properties imply $|G_{i+1,3}|\leq |G_{i,3}|-1$ provided that $G_{i,3}$ is nonempty. Thus the above decomposition process stops in finite steps.

Since $G_0$ is a finite set, $|G_0|=t_{r+1}-t_r\leq N$, the number of all three-tuples $(G_{i,1},G_{i,2},G_{i,3})$, for each $i\geq 1$, is bounded by a constant depending only on $N$. Therefore, in each step, we can divide the summation over $l_i$ into a finite summation by restricting $l_i$ in $(G_{i,1},G_{i,2},G_{i,3})$. In other words, $l_i$ satisfies inequalities associated with $G_{i,u}$; for example, inequalities associated with $G_{i,1}$ are $|\alpha_t-\alpha_{e_i}|2^{k\eta}\geq 2^{l_i+N_0}$, where $t\in G_{i,1}$ and $e_i$ is the least member in $G_{i,2}$. This observation will simplify our proof of the decay estimate of $W_{j,k,l_0,l_1,\cdots,l_{w-1}}^{\sigma_0,\sigma_1,
\cdots,\sigma_{w-1}}$.

Assume $W_{j,k,l_0,\cdots,l_{w-1}}^{\sigma_0,\cdots,\sigma_{w-1}}$ and $W_{j,k,l_0,\cdots,l'_{w-1}}^{\sigma_0,\cdots,\sigma_{w-1}}$ are two operators such that $l_{w-1}$ and $l'_{w-1}$ satisfy inequalities associated with the same three-tuple
$(G_{w-1,i})_{i=1}^3$. It should be pointed out that $G_{w,3}$ is empty. Now we shall verify the assumptions in Lemma \ref{Lemma almost orth van der Corput}. First observe that $W_{j,k,l_0,\cdots,l_{w-1}}^{\sigma_0,\cdots,\sigma_{w-1}}$ is supported in the intersection of $\Omega$ and the following horizontally (also vertically) convex domain
\begin{equation}\label{sec3 HV domain support of Wjkl}
\Omega_{j,k,l_0,\cdots,l_{w-1}}^{\sigma_0,\cdots,\sigma_{w-1}}
=
\mathbf{Closure~~of~~}{\left\{(x,y):\Phi\l( \frac{x}{2^j} \r)\Phi\l( \frac{y}{2^k} \r)
\prod_{t=0}^{w-1}\Phi\l( \sigma_t \frac{x-\alpha_{e_t}y^{\eta}}{2^{l_t}} \r)\neq 0\right\}}
\end{equation}
provided that $\Phi\in C_0^{\infty}(\bR)$ further satisfies $\Phi(x)>0$ on $(\frac{1}{2},2)$.

Define an expanded region $\Omega_{j,k,l_0,\cdots,l_{w-1}}^{\sigma_0,\cdots,\sigma_{w-1}~\ast}$ by
\begin{eqnarray}\label{sec3 expanded domain general}
& &2^{j-1}-\epsilon 2^{j}\leq x \leq 2^{j+1}+\epsilon 2^{j},~~~
2^{k-1}-\epsilon 2^{k}\leq y \leq 2^{k+1}+\epsilon 2^{k},~~~\nonumber\\
& &~2^{l_t-1}-\epsilon 2^{l_{t}}\leq \sigma_t(x-\alpha_{e_t}y^{\eta}) \leq 2^{l_t+1}+\epsilon 2^{l_{t}},~~~0\leq t \leq w-1
\end{eqnarray}
for some sufficiently small $\epsilon=\epsilon(\eta)>0$. Observe that $W_{j,k,l_0,\cdots,l_{w-1}}^{\sigma_0,\cdots,\sigma_{w-1}}=0$ unless
$l_{w-1}\leq \min\{j,l_0,\cdots,l_{w-2}\}+C(\eta)$. One can verify that all conditions in Lemma \ref{Lemma almost orth van der Corput} are true for $W_{j,k,l_0,\cdots,l_{w-1}}^{\sigma_0,\cdots,\sigma_{w-1}}$ and $W_{j,k,l_0,\cdots,l'_{w-1}}^{\sigma_0,\cdots,\sigma_{w-1}}$.

Assume $l_{w-1}\geq l'_{w-1}$. By Lemma \ref{Lemma almost orth van der Corput}, $\| W_{ j,k,l_0,l_1,\cdots, l_{w-1} }^{\sigma_0,\cdots,\sigma_{w-1}}   W^{\sigma_0,\cdots,\sigma_{w-1}~\ast}_{j,k,l_0,l_1,\cdots, l'_{w-1} } \|$ is bounded by a constant multiple of
\begin{eqnarray*}
\l[ |\lambda|  2^{jm}  2^{kn}  2^{jt_r}
\prod_{i=1}^{w}\l( \prod_{t\in G_{i,1}}  |\alpha_t-\alpha_{e_{i-1}}| 2^{k\eta}
\prod_{t\in G_{i,2}}  2^{l_{i-1}} \r)
\prod_{t=t_{r+1}+1}^{N}\left(|\alpha_t|2^{k\eta}\right) \r]^{-1} \times
\end{eqnarray*}
\begin{eqnarray*}
\l[  2^{jm}  2^{j|\Xi_1\cap\Theta|}
\prod_{i=1}^{w-1}\l( \prod_{t\in G_{i,1} \cap \Theta}  |\alpha_t-\alpha_{e_{i-1}}| 2^{k\eta}
\prod_{t\in G_{i,2} \cap \Theta }  2^{l_{i-1}} \r)
\prod_{t\in \Xi_2\cap \Theta} \left(|\alpha_t|2^{k\eta}\right) \r]^{2\Re(z)} \times
\end{eqnarray*}
\begin{eqnarray*}
\l[  \l( \prod_{t\in G_{w,1} \cap \Theta}  |\alpha_t-\alpha_{e_{w-1}}| 2^{k\eta}
\prod_{t\in G_{w,2} \cap \Theta }  2^{l_{w-1}} \r)
\l( \prod_{t\in G_{w,1} \cap \Theta}  |\alpha_t-\alpha_{e_{w-1}}| 2^{k\eta}
\prod_{t\in G_{w,2} \cap \Theta }  2^{l'_{w-1}} \r)  \r]^{\Re(z)}
\end{eqnarray*}
where $\Xi_1=\{1,2,\cdots,t_r\}$, $\Theta=\{i_1,i_2\cdots,i_s\}$, and $\Xi_2=\{t_{r+1}+1,\cdots,N\}$.

By size estimates, $\| W_{  j,k,l_0,l_1,\cdots, l_{w-1} }^{\sigma_0,\cdots,\sigma_{w-1}} W_{  j,k,l_0,l_1,\cdots, l'_{w-1} }^{\sigma_0,\cdots,\sigma_{w-1}~\ast}\|$ is less than a constant multiple of
\begin{eqnarray*}
\l[  2^{jm}  2^{j|\Xi_1\cap\Theta|}
\prod_{i=1}^{w-1}\l( \prod_{t\in G_{i,1} \cap \Theta}  |\alpha_t-\alpha_{e_{i-1}}| 2^{k\eta}
\prod_{t\in G_{i,2} \cap \Theta }  2^{l_{i-1}} \r)
\prod_{t\in \Xi_2\cap \Theta} \left(|\alpha_t|2^{k\eta}\right) \r]^{2\Re(z)} \times
\end{eqnarray*}
\begin{eqnarray*}
\l[  \l( \prod_{t\in G_{w,1} \cap \Theta}  |\alpha_t-\alpha_{e_{w-1}}| 2^{k\eta}
\prod_{t\in G_{w,2} \cap \Theta }  2^{l_{w-1}} \r)
\l( \prod_{t\in G_{w,1} \cap \Theta}  |\alpha_t-\alpha_{e_{w-1}}| 2^{k\eta}
\prod_{t\in G_{w,2} \cap \Theta }  2^{l'_{w-1}} \r)  \r]^{\Re(z)}\times
\end{eqnarray*}
\begin{equation*}
2^{ l_{w-1}/2 } \l( \frac{2^{ l_{w-1} }}{ |\alpha_{ e_{w-1} }| 2^{k(\eta-1)} } \r)^{1/2}
2^{ l'_{w-1}/2 } \l( \frac{2^{ l'_{w-1} }}{ |\alpha_{ e_{w-1} }| 2^{k(\eta-1)} } \r)^{1/2},
\end{equation*}
where the last two terms are upper bounds for the measure of $\left\{y\mid
\Phi\l(\frac{y}{2^k}\r)
\Phi\l(\sigma_{w-1}\frac{x-\alpha_{e_{w-1}}y^{\eta}}{2^{u}}\r)\neq 0\right\}$
with $u\in \{l_{w-1},l'_{w-1}\}$. On the other hand, our decomposition of $G_0$ implies
\begin{eqnarray*}
|\alpha_t-\alpha_{e_{i-1}}|2^{k\eta}
&\geq &
C(\eta) 2^{l_{w-1}},~~t\in G_{i,1},~~1\leq i \leq w;\\
2^{l_i}
&\geq &
C(\eta,|G_0|) 2^{l_{w-1}},~~~1\leq i \leq w-2.
\end{eqnarray*}
Using these inequalities and the observation that $\theta=2\gamma$ satisfies $\theta-2\Re(z)\geq0$, we deduce from a convex combination of the above oscillation and size estimates that $\| W_{  j,k,l_0,l_1,\cdots, l_{w-1} }^{\sigma_0,\cdots,\sigma_{w-1}} W_{  j,k,l_0,l_1,\cdots, l'_{w-1} }^{\sigma_0,\cdots,\sigma_{w-1}~\ast}\|$ is not greater than a constant multiple of
\begin{eqnarray*}\label{sec3 almost ortho esti 2}
\l[ |\lambda|  2^{jm}  2^{kn}  2^{jt_r}
2^{l_{w-1}|G_0|}
\prod_{\Xi_2}\left(|\alpha_t|2^{k\eta}\right) \r]^{-\theta}
\l[  2^{jm}  2^{j|\Xi_1\cap\Theta|}
2^{l_{w-1}|G_0\cap \Theta|}
\prod_{\Xi_2\cap \Theta} \left(|\alpha_t|2^{k\eta}\right) \r]^{2\Re(z)}\times
\end{eqnarray*}
\begin{eqnarray}
2^{-l_{w-1}|G_{w,2}\cap\Theta|\Re(z)}
2^{l'_{w-1}|G_{w,2}\cap\Theta|\Re(z)}
2^{l_{w-1}(1-\theta)}
2^{l'_{w-1}(1-\theta)}\l( |\alpha_{e_{w-1}}| 2^{k(\eta-1)} \r)^{-(1-\theta)}.
\end{eqnarray}
The procedure for decomposition of $G_0$ also implies
\begin{eqnarray*}
2^j \geq C(\eta,|G_0|) 2^{l_{w-1}},~~~~~
|\alpha_{t_{r+1}}|2^{k\eta} \geq C(\eta,|G_0|) 2^{l_{w-1}}.
\end{eqnarray*}
Inserting these estimates into the above bound in (\ref{sec3 almost ortho esti 2}), we see that the resulting exponent of $2^{l_{w-1}}$ is
\begin{eqnarray*}
& &-\l(m+\frac{n}{\eta}+|\Xi_1|+|G_0|+|\Xi_2|\r)\theta+
\Big(m+|\Xi_1\cap\Theta|+|G_0\cap\Theta|+\\
& &|\Xi_2\cap\Theta|\Big)\cdot 2\Re(z)-|G_{w,2}\cap\Theta|\Re(z)+
1-\theta
-\frac{\eta-1}{\eta}(1-\theta)\\
&=&
-\l(m+\frac{n}{\eta}+N\r)\theta+2(m+s)\Re(z)-|G_{w,2}\cap\Theta| \Re(z)
+\frac{1-\theta}{\eta}\\
&=&
-\l(m+\frac{n}{\eta}+N\r)\theta+(m+s)\l(\theta-\frac{1-\theta}{m+s}\r)
-|G_{w,2}\cap\Theta|\cdot \Re(z)+\frac{n+(N-s)\eta}{\eta}\theta\\
&=&
-(1-\theta)-|G_{w,2}\cap\Theta| \Re(z),
\end{eqnarray*}
and the bound involving the coefficients $\alpha_{i_t}$ is
\begin{eqnarray*}
& &
\Big|\alpha_{t_{r+1}}\Big|^{\frac{n}{\eta}\theta}
\l(\prod_{\Xi_2}\frac{|\alpha_t|}{|\alpha_{t_{r+1}}|}\r)^{-\theta}
\l(\prod_{\Xi_2\cap\Theta}\frac{|\alpha_t|}{|\alpha_{t_{r+1}}|}\r)^{2\Re(z)}
\Big|\alpha_{e_{w-1}}\Big|^{-(1-\theta)}
\Big|\alpha_{t_{r+1}}\Big|^{\frac{\eta-1}{\eta}(1-\theta)}\\
&\leq&
C(\eta)
\Big|\alpha_{t_{r+1}}\Big|^{\frac{n}{\eta}\theta}
\l(\prod_{\Xi_2\backslash\Theta}\frac{|\alpha_t|}{|\alpha_{t_{r+1}}|}\r)^{-\theta}
\Big|\alpha_{t_{r+1}}\Big|^{-(1-\theta)}
\Big|\alpha_{t_{r+1}}\Big|^{\frac{\eta-1}{\eta}(1-\theta)}\\
&\leq&
C(\eta)
\l(\prod_{t \in \Xi_2\backslash\Theta}|\alpha_t|\r)^{-\theta}
\Big|\alpha_{t_{r+1}}\Big|^{|\Xi_2\backslash\Theta|\theta-(N-s)\theta}
\leq
C(\eta)\l(\prod_{t \notin \Theta}|\alpha_t|\r)^{-\theta},
\end{eqnarray*}
where we have used $|\alpha_{e_{w-1}}|\approx |\alpha_{t_{r+1}}|$, $\theta-2\Re(z)\geq0$, and $N-s=|\Xi_1\backslash\Theta|+|G_0\backslash\Theta|+|\Xi_2\backslash\Theta|$.
Therefore we obtain
\begin{equation}\label{sec3 almost ortho esti}
\l\| W_{  j,k,l_0,l_1,\cdots, l_{w-1} }^{\sigma_0,\cdots,\sigma_{w-1}}   W^{\sigma_0,\cdots,\sigma_{w-1}~\ast}_{j,k,l_0,l_1,\cdots, l'_{w-1} } \r \|
\leq
C\l( |\lambda| \prod_{k\notin \Theta}|\alpha_k| \r)^{-\theta}
2^{-|l_{w-1}-l'_{w-1}|\delta}
\end{equation}
with $\delta=|G_{w,2}\cap\Theta|\Re(z)+1-\theta$. It remains to show $\delta>0$. In fact, we see that if $1-\theta=0$ then $n=0,s=N$ and hence $\delta=|G_{w,2}|\theta/2>0$. On the other hand, if $1-\theta>0$ then $\delta=\frac{a\theta}{2}+\left(1-\frac{a}{2(m+s)}\right)(1-\theta)>0$ with $a=|G_{w,2}\cap\Theta|\leq s$. By the same argument as above, one can obtain a similar estimate for
$W_{  j,k,l_0,l_1,\cdots, l_{w-1} }^{\sigma_0,\cdots,\sigma_{w-1}~\ast}   W^{\sigma_0,\cdots,\sigma_{w-1}}_{j,k,l_0,l_1,\cdots, l'_{w-1} }$.\\

$\mathbf{ Step~2.~~Existence~of~complex~roots }$\\

In what follows, we want to show how the above arguments in Step 1 are modified in the presence of complex roots. In Step 1, a resolution of singularities is needed only in Subcase (b) of Case (iii). For this reason, it is clear that the earlier arguments can apply without any change to Case (i), Case (ii) and Subcase (a) of Case (iii) now. Hence we need only to consider Subcase (b) of Case (iii).

Assume $j,k\in \bZ$ satisfy the above restriction in Subcase (b).
It is easy to see that for those $t\in G_0:=\{t_r+1,t_r+2,\cdots,t_{r+1}\}$ satisfying $|\Re(\alpha_t)| \leq |\Im(\alpha_t)|$, we have
\begin{equation}\label{sec3 equiv size for complex roots}
|x- \alpha_{t} y^{\eta}|
\approx
|x- \Re(\alpha_{t}) y^{\eta}|
+
|\Im(\alpha_{t})| y^{\eta}
\approx |\alpha_t|2^{k\eta}
\end{equation}
in the support of $W_{j,k}$. Let $G_0^{(1)}$ consist of $t\in G_0$ such that $|\Re(\alpha_t)| \leq |\Im(\alpha_t)|$, and $G_0^{(2)}$ be the complement of $G_0^{(1)}$ in $G_0$, i.e.,
\begin{equation}
G_0^{(1)}=\{t\in G_0:~|\Re(\alpha_t)| \leq |\Im(\alpha_t)|\},~~~
G_0^{(2)}=\{t\in G_0:~|\Re(\alpha_t)| > |\Im(\alpha_t)|\}.
\end{equation}
We shall first give a resolution of singularities for $\prod_{t\in G_0^{(2)}}(x-\Re(\alpha_t)y^{\eta})$. The method in Step 1 can apply here with $G_0$ and $\alpha_t$ replaced by $G_0^{(2)}$ and $\Re(\alpha_t)$, respectively.
For clarity, we shall now describe this procedure. Choose the least number $e_0$ in $G_0^{(2)}$, and define $W_{j,k,l_0}^{\sigma_0}$ by insertion of the cut-off function $\Phi\Big(\sigma_0\frac{ x-\Re(\alpha_{e_0})y^{\eta} }{2^{l_0}}\Big)$ into $W_{j,k}$. Then we define a three-tuple $\{G_{1,1},G_{1,2},G_{1,3}\}$ as in Step 1 with $G_0^{(2)}$ in place of $G_0$. If $\{G_{i-1,1},G_{i-1,2},G_{i-1,3}\}$ is given and $G_{i-1,3}$ is not empty, we can define $W_{j,k,l_0,\cdots,l_i}^{\sigma_0,\cdots,\sigma_i}$ by insertion of $\Phi\Big(\sigma_i\frac{x-\Re(\alpha_{e_{i-1} } ) y^{\eta}}{2^{l_i}}\Big)$ into $W_{j,k,l_0,\cdots,l_{i-1}}^{\sigma_0,\cdots,\sigma_{i-1}}$. As in Step 1, the integer $e_{i-1}$ is the least number in $G_{i-1,3}$, and a new three-tuple $\{G_{i,1},G_{i,2},G_{i,3}\}$ can be defined in the same way.

Assume $\mathcal{G}:=\{G_{i,1},G_{i,2},G_{i,3}\}_{i=1}^w$ is a sequence of three-tuples satisfying all of the above three properties in Step 1. Now for two operators $W_{j,k,l_0,\cdots,l_{w-1}}^{\sigma_0,\cdots,\sigma_{w-1}}$ and $W_{j,k,l_0,\cdots,l_{w-1}'}^{\sigma_0,\cdots,\sigma_{w-1}}$ of class $\mathcal{G}$, our purpose is to establish the almost orthogonality estimate (\ref{sec3 almost ortho esti}). It should be pointed out that the above oscillation and size estimates in Step 1 are still true, except for the size of the factor $\prod_{t=t_r+1}^{t_{r+1}}(x-\alpha_ty^{\eta})$. In the support of $W_{j,k,l_0,\cdots,l_{w-1}}^{\sigma_0,\cdots,\sigma_{w-1}}$, the size of $\prod_{t=t_r+1}^{t_{r+1}}(x-\alpha_ty^{\eta})$ is comparable to
$$
\prod_{G_0^{(1)}}\Big( |\alpha_t|2^{k\eta} \Big)
\prod_{i=1}^w\left(\prod_{G_{i,1}}|\Re(\alpha_t)
-\Re(\alpha_{e_{i-1}})|2^{k\eta}\vee |\Im(\alpha_t)|2^{k\eta}\right)
\left(\prod_{G_{i,2}}2^{l_{i-1}}\vee |\Im(\alpha_t)|2^{k\eta}\right),
$$
where (\ref{sec3 equiv size for complex roots}) is used for $t\in G_0^{(1)}$.
Now we shall divide our discussion into several cases.\\

$\bullet$ $\Re(z)=\frac{\theta}{2}-\frac{1-\theta}{2(m+s)}\leq 0$

In the convex combination of oscillation and size estimates for $W_{j,k,l_0,\cdots,l_{w-1}}^{\sigma_0,\cdots,\sigma_{w-1}~\ast}
W_{j,k,l_0,\cdots,l_{w-1}'}^{\sigma_0,\cdots,\sigma_{w-1}}$
(with $\theta=2\gamma$ as in Step 1), all exponents of terms of the forms $|\Re(\alpha_t)
-\Re(\alpha_{e_{i-1}})|2^{k\eta} \vee |\Im(\alpha_t)|2^{k\eta}$ and
$2^{l_{i-1}}\vee |\Im(\alpha_t)|2^{k\eta}$ are nonpositive. Hence we can obtain a bigger bound by replacing these two terms by $|\Re(\alpha_t)
-\Re(\alpha_{e_{i-1}})|2^{k\eta}$ and
$2^{l_{i-1}}$, respectively. On the other hand, as in the case of real roots, we also have $|\alpha_t|2^{k\eta}\gtrsim 2^{l_{w-1}}$ for $t\in G_0^{(1)}$, $|\Re(\alpha_t)
-\Re(\alpha_{e_{i-1}})|2^{k\eta}\gtrsim 2^{l_{w-1}}$ for $t\in G_{i,1}$, and $2^{l_i}\gtrsim 2^{l_{w-1}}$ for $0\leq i \leq w-1$. By the same argument as in Step 1, we can show that the estimate (\ref{sec3 almost ortho esti}) is alo true.\\

$\bullet$ $\Re(z)\geq 0$ and $0\leq \theta <1$

In the resulting $L^2$ estimate (with a convex combination of oscillation and size estimates as above), terms concerning $2^{l_{w-1}}$ and $2^{l_{w-1}'}$ are given respectively by
$$
\left(\prod\nolimits_{G_{w,2}}2^{l_{w-1}}\vee |\Im(\alpha_t)|2^{k\eta}\right)^{-\theta}
\left(\prod\nolimits_{G_{w,2}\cap\Theta}2^{l_{w-1}}\vee |\Im(\alpha_t)|2^{k\eta}\right)^{\Re(z)}
2^{l_{w-1}(1-\theta)}
$$
and
\begin{equation*}
\left(\prod\nolimits_{G_{w,2}\cap\Theta}2^{l_{w-1}'}\vee |\Im(\alpha_t)|2^{k\eta}\right)^{\Re(z)}
2^{l_{w-1}'(1-\theta)}.
\end{equation*}
Since $\Re(z)\geq0$ and $l_{w-1}\geq l_{w-1}'$ (assumed as in Step 1), we have
\begin{equation*}
\left(\prod\nolimits_{G_{w,2}\cap\Theta}2^{l_{w-1}}\vee |\Im(\alpha_t)|2^{k\eta}\right)^{-\Re(z)}
\left(\prod\nolimits_{G_{w,2}\cap\Theta}2^{l_{w-1}'}\vee |\Im(\alpha_t)|2^{k\eta}\right)^{\Re(z)}\leq 1.
\end{equation*}
Thus the $L^2$ operator norm is dominated by a constant multiple of the bound in Step 1, where the exponents of $2^{l_{w-1}}$ and $2^{l_{w-1}'}$ should be replaced by $-(1-\theta)$ and $1-\theta$, respectively. Therefore we obtain the estimate (\ref{sec3 almost ortho esti}) with $\delta=1-\theta$.\\

$\bullet$ $\Re(z)\geq 0$ and $\theta=1$

Our assumption $\theta=1$ implies $n=0$ and $s=N$. Hence $\Re(z)=\frac12$. Write $G_{w,2}=\{u_1,u_2,\cdots,u_{\kappa}\}$ with $\kappa\geq 1$. Assume the absolute value of $\alpha_{u_i}$ increases as $i$. If $2^{l_{w-1}}\leq |\Im(\alpha_{u_1})|2^{k\eta}$, we shall consider $\sum_{l_{w-1}}W_{j,k,l_0,\cdots,l_{w-1}}^{\sigma_0,\cdots,\sigma_{w-1}}$, where this summation is taken over those $l_{w-1}$ such that $W_{j,k,l_0,\cdots,l_{w-1}}^{\sigma_0,\cdots,\sigma_{w-1}}$ is of class $\mathcal{G}$ and $2^{l_{w-1}}\leq |\Im(\alpha_{u_1})|2^{k\eta}$. By Lemma \ref{Lemma almost orth van der Corput} (with $T_{\lambda}^{(1)}=T_{\lambda}^{(2)}$), we obtain
\begin{equation}
\left\|\sum\nolimits_{l_{w-1}}W_{j,k,l_0,\cdots,l_{w-1}}^{\sigma_0,\cdots,\sigma_{w-1}} \right\|
\leq
C(deg(S),\varphi)|\lambda|^{-\frac12}.
\end{equation}
Otherwise if $2^{l_{w-1}},2^{l_{w-1}'}\geq |\Im(\alpha_t)|2^{k\eta}$ for some $1\leq t \leq \kappa$, then the almost orthogonality estimate (\ref{sec3 almost ortho esti}) is true with $\delta=t$.\\

It remains to show that all constants $C$ appearing in above estimates have a more precise form as in the theorem. Indeed, we can deduce from Lemma \ref{Lemma almost orth van der Corput} as well as Remark \ref{remark almost ortho esti} that all above constants $C$ can take the following form
\begin{equation*}
C(deg(S))
\mathop{\sup}_{\Omega}
\sum_{k=0}^{2}
\left( \big(\delta^{}_{\Omega,h}(x) \big)^k |\partial^k_y\varphi(x,y)|
+
\big( \delta^{}_{\Omega,v}(y) \big)^k |\partial^k_x\varphi(x,y)|\right).
\end{equation*}
Here $C(deg(S))$ is a constant depending only on the degree of $S$.

Combining all above results, we have completed the proof of the theorem.
\end{proof}\\

As mentioned in Remark \ref{Remark Theorem L2 damping estimate}, the assumption $\eta \geq 1$ is necessary in the proof of Theorem \ref{Theorem L2 damping estimate}. Now we address the case $\eta<1$. Let $\nu=\eta^{-1}>1$. Since $S$ is a polynomial, (\ref{Hessian of phase}) can be rewritten as
\begin{equation}\label{Hessian of phase II}
S_{xy}''(x,y) = d_0 x^m y^n \prod_{i=1}^{M}\l( x^{\nu}-\beta_i y \r)
\end{equation}
with $d_0,\beta_i\in \mathbb{C}\backslash \{0\}$. Without loss of generality, we may assume $c_0=1$. For some $0\leq s \leq M$, we first choose indices $1\leq i_1<\cdots<i_s\leq N$. Then define the damping factor $D$ as
\begin{equation*}
D(x,y) = x^m \prod_{t=1}^s\l( x^{\nu}-\beta_{i_t} y \r).
\end{equation*}
Here we may take $s=0$ and then define $D(x,y)=x^m$. The following theorem can be regarded as a variant of Theorem \ref{Theorem L2 damping estimate}.

\begin{theorem}\label{Theorem Duality of damping L2 estimate}
Assume $S$ is a real-valued polynomial such that its Hessian is given by {\rm(\ref{Hessian of phase II})} with $\nu\geq 1$. Let $W_z$ be defined as in {\rm(\ref{damping OIO})}. Then there exists a constant $C$, depending only on $deg(S)$ and $\varphi$, such that
\begin{equation}\label{damping decay estimate II}
\|W_zf\|_{L^2}
\leq
C(1+|z|^2)\l(|\lambda|\prod_{k\notin \{i_1,\cdots,i_s\}}|\beta_k|\r)^{-\gamma}\|f\|_{L^2},
~~~\gamma=\frac{1}{2(n+M-s+1)},
\end{equation}
where $z\in\mathbb{C}$ has real part
\begin{equation}\label{L2 damping exponent II}
\Re(z)=\frac{m+s\nu-n-(M-s)}{2(n+M-s+1)}\cdot\frac{1}{m+s\nu}.
\end{equation}
\end{theorem}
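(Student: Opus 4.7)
The plan is to reduce the estimate for $W_z$ to the setting of Theorem~\ref{Theorem L2 damping estimate} by duality, and then to rerun that proof on the dualized operator.

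Since $\|W_z\|_{L^2\to L^2}=\|W_z^*\|_{L^2\to L^2}$, it suffices to bound the adjoint $W_z^*$. A relabeling $x\leftrightarrow y$ converts $W_z^*$ into an oscillatory integral operator $\widetilde{W}$ with phase $\widetilde{S}(x,y):=-S(y,x)$, damping factor $\widetilde{D}(x,y):=D(y,x)$, and cut-off $\overline{\varphi}(y,x)$. Using $(y^\nu-\beta_i x)=-\beta_i(x-\beta_i^{-1}y^\nu)$ factor by factor, a direct calculation yields
\[
\widetilde{S}''_{xy}(x,y)=c_0'\,x^n y^m\prod_{i=1}^{M}\bigl(x-\beta_i^{-1}y^\nu\bigr),\qquad
\widetilde{D}(x,y)=c_s'\,y^m\prod_{t=1}^{s}\bigl(x-\beta_{i_t}^{-1}y^\nu\bigr),
\]
with nonzero real constants $c_0',c_s'$. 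The constant $c_0'$ is absorbed into $\lambda$, and $|c_s'|^{\Re z}$ becomes an overall multiplicative factor of size $\prod_{t=1}^{s}|\beta_{i_t}|^{\Re z}$.

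The Hessian $\widetilde{S}''_{xy}$ now has precisely the form addressed in Theorem~\ref{Theorem L2 damping estimate}, with $\eta$ replaced by $\nu\geq 1$, $N$ by $M$, $\alpha_i$ by $\beta_i^{-1}$, and the roles of the exponents $m,n$ interchanged. The damping $\widetilde{D}$, however, carries a $y^m$ prefactor rather than the $x^{m_{1}}$ prefactor required by the template of Theorem~\ref{Theorem L2 damping estimate}, so that theorem does not apply verbatim. My plan is therefore to rerun its entire proof on $\widetilde{W}$: the dyadic decomposition $x\in[2^j,2^{j+1}]$, $y\in[2^k,2^{k+1}]$; the three-case split (Cases~(i)--(iii) with Subcases~(a) and~(b)); the oscillation estimates via Lemma~\ref{Lemma almost orth van der Corput}; the Schur-type size estimates; and the convex-combination balancing with parameter $\theta=2\gamma$. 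The only change is that the contribution of $|\widetilde{D}|^{\Re z}$ now enters each estimate through exponents of $2^k$ (via $y^m$) rather than of $2^j$ (as $x^m$ would in Theorem~\ref{Theorem L2 damping estimate}).

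Carrying this out, the pair of relations that determine $\theta$ and $\Re z$ (analogous to~(\ref{relation theta and gamma})) becomes
\[
1-\theta=(n+M-s)\theta,\qquad \Re z=\frac{\theta}{2}-\frac{1-\theta}{2(m+s\nu)}.
\]
Here $n+(M-s)$ is the undamped $y$-weight of the new Hessian (each of the remaining $M-s$ factors contributes $1$ to the $y$-direction, while the $y^m$ in $\widetilde{D}$ matches the $y^m$ in $\widetilde{S}''_{xy}$), and $m+s\nu$ is the total $x$-weight of $\widetilde{D}$ (the $y^m$ contributes nothing in $x$ while each of the $s$ damped factors contributes $\nu$). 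Solving yields $\gamma=1/[2(n+M-s+1)]$ together with the stated expression for $\Re z$.

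The principal obstacle, as in Theorem~\ref{Theorem L2 damping estimate}, will be Subcase~(b) of Case~(iii), where a resolution-of-singularities procedure on the factors $(x-\beta_i^{-1}y^\nu)$ must be carried out, with the further complication of possibly complex roots (Steps~1 and~2 of that proof). One must verify that, under the modified damping, the almost orthogonality decay exponent $\delta$ produced by the decomposition into three-tuples $\{G_{i,1},G_{i,2},G_{i,3}\}$ remains strictly positive in every sub-regime. This is a bookkeeping exercise closely parallel to the one performed in Theorem~\ref{Theorem L2 damping estimate}, but with $\nu$ in place of $\eta$ throughout and with the $y^m$ in $\widetilde{D}$ in place of $x^m$.
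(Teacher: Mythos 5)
Your plan has the right shape but contains an \emph{unnecessary detour} plus an \emph{inconsistency in the exponent bookkeeping} that would likely derail the rerun.

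First, the detour. The paper's claim that ``the proof is the same as Theorem~\ref{Theorem L2 damping estimate}'' does \emph{not} mean ``dualize and then apply Theorem~\ref{Theorem L2 damping estimate}.'' It means: run the argument of Theorem~\ref{Theorem L2 damping estimate} directly on $W_z$, using the dyadic decomposition $x\approx 2^j$, $y\approx 2^k$ with the comparison $2^{j\nu}$ vs.\ $|\beta_i|2^k$ in place of $2^j$ vs.\ $|\alpha_i|2^{k\eta}$. In this direct form, the prefactor $x^m$ in $S''_{xy}$ still matches the prefactor $x^m$ in $D$, so the case structure (Cases (i)--(iii), Subcases (a)--(b)) transfers verbatim. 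Your dualization swaps this matching prefactor from $x^m$ to $y^m$ while the Hessian prefactor becomes $x^n$, so the prefactors now \emph{mismatch}, which is precisely the complication you point to. You then say ``rerun the entire proof,'' but you have traded a problem whose prefactors match (direct approach) for one whose prefactors do not (dualized approach). The direct route is strictly simpler; the dualization buys nothing.

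Second, the exponent interpretation. Your stated relations $1-\theta=(n+M-s)\theta$ and $\Re z=\frac{\theta}{2}-\frac{1-\theta}{2(m+s\nu)}$ are correct and match the theorem, but your derivations of the quantities $n+M-s$ and $m+s\nu$ \emph{in the dualized picture} consistently swap $x$ and $y$. In the dualized picture with $\widetilde S''_{xy}=c_0'\,x^n y^m\prod(x-\beta_i^{-1}y^\nu)$ and $\widetilde D=c_s'\,y^m\prod_{t}(x-\beta_{i_t}^{-1}y^\nu)$: each factor $(x-\beta_k^{-1}y^\nu)$ has $x$-degree $1$ and $y$-degree $\nu$, so $n+(M-s)$ is the $x$-weight (not $y$-weight) of $\widetilde S/\widetilde D$, and $m+s\nu$ is the $y$-weight (not $x$-weight) of $\widetilde D$. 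This is not merely cosmetic: the relation (\ref{relation theta and gamma}) in Theorem~\ref{Theorem L2 damping estimate} comes from the $2^k$-exponent balance (with $y$-weights) and the $x$-degree of $D$ enters through the $\alpha$-matching step. When you swap to the dualized picture these roles trade places; if you carry out the computation believing your stated (swapped) interpretation, the $2^j$ vs.\ $2^k$ balance would fail in Cases~(i)/(ii). Getting the right answer out of the dualized picture requires tracking precisely which direction each piece of the damping contributes to, plus keeping the absorbed constants $|c_0'|=\prod_i|\beta_i|$ and $|c_s'|^{\Re z}=\prod_{t\in\Theta}|\beta_{i_t}|^{\Re z}$ through to the end and verifying that they combine with the inverted roots $\alpha_i=\beta_i^{-1}$ to produce the bound $\bigl(|\lambda|\prod_{k\notin\Theta}|\beta_k|\bigr)^{-\gamma}$ rather than, say, a bound in terms of $\prod_{t\in\Theta}|\beta_{i_t}|$. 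None of this is checked in the proposal; it is asserted as ``bookkeeping.'' The concrete fix is to drop the duality step entirely and rerun the Theorem~\ref{Theorem L2 damping estimate} argument on $W_z$ as given, where the only change is replacing the comparison $2^j\lessgtr|\alpha_i|2^{k\eta}$ by $2^{j\nu}\lessgtr|\beta_i|2^k$ and the rectangle widths $|\alpha_1|2^{k\eta}$ by $|\beta_1|^{1/\nu}2^{k/\nu}$; then the two relations come out directly with the $y$-weight of $S''_{xy}/D$ equal to $n+(M-s)\cdot 1$ and the $x$-degree of $D$ equal to $m+s\nu$, exactly as in (\ref{L2 damping exponent II}).
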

\begin{remark}
The proof of this theorem is the same as that of Theorem \ref{Theorem L2 damping estimate}. For this reason, we omit the details here. It should be pointed out that the constant $C$ in the above theorem can also take the form in Theorem \ref{Theorem L2 damping estimate}. By the same argument as above, we can also prove Theorem \ref{Uniform Damped OIO} with uniformity on both the phases and the cut-off functions.
\end{remark}

\noindent{We} conclude this section with a $L^2$ damping estimate for $W_{j,k}$ in (\ref{definition of Ujk}). Assume $$\Theta:=\{i_1,i_2,\cdots,i_s\}=\{t_r+1,t_r+2,\cdots,t_{r+1}\}$$
for some $0\leq r \leq a-1$, where $\{t_1,t_2,\cdots,t_a\}$ is defined as in (\ref{sec4 exceptional index set}). Here we set $t_0=0$. In section 4, we will need an analogue of the damping estimate in Subcase (b) of Case (iii) with the modified damping factor
\begin{equation}\label{defn of Djk}
D(x,y)
=
\Big(
|\lambda| |\alpha_{i_s}|^{-1} 2^{-k(\eta-1)}
A\Big)^{ -\frac{s}{s+2} }
+
\prod_{t=1}^{s}\big|x-\alpha_{i_t}y^{\eta}\big|,
\end{equation}
where $A$ is a positive number given by
\begin{equation}\label{defn of A}
A=2^{jm} 2^{kn} 2^{jt_r } \left(\prod_{t=t_{r+1}+1}^N |\alpha_t|2^{k\eta} \right).
\end{equation}

\begin{theorem}\label{Theorem L2 damping esti revision}
Assume $m=0$ in {\rm(\ref{Hessian of phase})} and $\{i_1,i_2,\cdots,i_s\}=\{t_r+1,t_r+2,\cdots,t_{r+1}\}$ for some $0\leq r \leq a-1$ and $|\alpha_{i_s}|2^{(k-1)\eta-2}\leq 2^j \leq |\alpha_{i_s}|2^{(k+1)\eta+2}$. Let $W_{j,k}$ be the damped oscillatory integral operator in {\rm (\ref{definition of Ujk})} with $D(x,y)$ defined by {\rm (\ref{defn of Djk})}. Then there exists a constant $C$ as in {\rm (\ref{uniform constant})} such that the decay estimate {\rm (\ref{damping decay estimate})} is still true for $W_{j,k}$ with the damping exponent $z\in\mathbb{C}$ having real part {\rm (\ref{L2 damping exponent})}.
\end{theorem}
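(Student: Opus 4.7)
The strategy is to carry over the resolution of singularities from Subcase (b) of Case (iii) in the proof of Theorem \ref{Theorem L2 damping estimate}, modifying only the bookkeeping for $|D(x,y)|^{z}$. Write $D=C_0+P$ with $C_0:=\bigl(|\lambda||\alpha_{i_s}|^{-1}2^{-k(\eta-1)}A\bigr)^{-s/(s+2)}$ and $P(x,y):=\prod_{t=1}^{s}|x-\alpha_{i_t}y^{\eta}|$. Since both summands are nonnegative, $D$ is strictly positive and $|D|^{z}=D^{z}$; moreover $C_0\vee P(x,y)\leq D(x,y)\leq C_0+P(x,y)$ pointwise. I would introduce the same dyadic cut-offs $\Phi\bigl(\sigma_{i}(x-\alpha_{e_i}y^{\eta})/2^{l_i}\bigr)$ to form operators $W_{j,k,l_0,\ldots,l_{w-1}}^{\sigma_0,\ldots,\sigma_{w-1}}$ and the same sequence of three-tuples $\{G_{i,1},G_{i,2},G_{i,3}\}_{i=1}^{w}$ as in Step 1. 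The hypothesis $\Theta=\{t_r+1,\ldots,t_{r+1}\}$ identifies the set $G_0$ of that earlier argument with $\Theta$, which streamlines the combinatorics in the plateau analysis below.

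On the support of each $W_{j,k,l_0,\ldots,l_{w-1}}^{\sigma_0,\ldots,\sigma_{w-1}}$, the product $P$ is comparable to $\prod_{i=1}^{w}\prod_{t\in G_{i,1}}|\alpha_t-\alpha_{e_{i-1}}|2^{k\eta}\prod_{t\in G_{i,2}}2^{l_{i-1}}$, and I would split the argument into two regimes. In the \emph{polynomial regime} where $P\geq C_0$, the bound $D^{\Re(z)}\lesssim P^{\Re(z)}$ (with a $(1+|z|)^{2}$ factor from Lemma \ref{sec2 poly type func lemma} absorbing the imaginary part of $z$) reduces the analysis verbatim to that of Subcase (b) of Case (iii), yielding an inequality of the form (\ref{sec3 almost ortho esti}). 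In the \emph{plateau regime} where $P\leq C_0$, I would use $D^{\Re(z)}\lesssim C_0^{\Re(z)}$; applying Lemma \ref{Lemma almost orth van der Corput} for oscillation and the Schur test for size, and invoking the explicit definition of $C_0$, the exponent $s/(s+2)$ is tuned so that $C_0^{\Re(z)}$ carries precisely the correct weight in $|\lambda|$ and the coefficients $\alpha_k$ to produce the same bound (\ref{sec3 almost ortho esti}).

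A convex combination with parameter $\theta=2\gamma$ of the oscillation and size estimates on each piece, followed by the almost-orthogonality decay $2^{-|l_{w-1}-l'_{w-1}|\delta}$ with $\delta=|G_{w,2}\cap\Theta|\Re(z)+1-\theta>0$ (positivity being verified as at the end of Step 1 of Theorem \ref{Theorem L2 damping estimate}), delivers the claimed inequality (\ref{damping decay estimate}) for $W_{j,k}$. Uniform control of the constant in the form (\ref{uniform constant}) follows from Lemma \ref{Lemma almost orth van der Corput} together with Remark \ref{remark almost ortho esti}. The principal technical obstacle lies in the plateau regime: one must verify that after substituting $D^{\Re(z)}\lesssim C_0^{\Re(z)}$ and using the lower bounds $2^{l_{i}}\gtrsim 2^{l_{w-1}}$ and $|\alpha_t-\alpha_{e_{i-1}}|2^{k\eta}\gtrsim 2^{l_{w-1}}$ afforded by the decomposition procedure, the resulting exponents of $|\lambda|$, of $2^{l_{w-1}}$, and of the coefficients $\alpha_k$ all match those produced in the polynomial regime. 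This amounts to a direct algebraic check exploiting the identities $1-\theta=(n+(N-s)\eta)\theta$ and $\Re(z)=\tfrac{\theta}{2}-\tfrac{1-\theta}{2(m+s)}$, together with the hypothesis $m=0$ imposed in the theorem, which together force the $s/(s+2)$ exponent in $C_0$ to be the unique value compatible with the target homogeneity.
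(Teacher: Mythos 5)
Your split into a \emph{polynomial regime} (where $P(x,y)=\prod_{t=1}^{s}|x-\alpha_{i_t}y^{\eta}|$ dominates) and a \emph{plateau regime} (where the additive constant $C_0$ dominates) is exactly the paper's organization, and your handling of the polynomial regime --- reducing verbatim to Subcase (b) of Case (iii) of Theorem \ref{Theorem L2 damping estimate} because $D$ and all its partial derivatives obey the same bounds as $P$ there --- matches the paper.

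Your treatment of the plateau regime, however, is not the paper's and does not go through as stated. You propose to re-run the almost-orthogonality machinery of Lemma \ref{Lemma almost orth van der Corput} and assert the same decay $2^{-|l_{w-1}-l'_{w-1}|\delta}$ with $\delta=|G_{w,2}\cap\Theta|\Re(z)+1-\theta$ and $\theta=2\gamma$. But that formula for $\delta$ in Subcase~(b) of Theorem \ref{Theorem L2 damping estimate} is generated precisely by the factor $|D(x,y)|^{z}$ tracking the size $2^{l_{w-1}|G_{w,2}|}$; once $|D|\approx C_{0}$ is constant in $l_{w-1}$, that contribution vanishes, and what is left (with your $\theta=2\gamma$) gives exponent $-s\theta+(1-\theta)=(n+(N-s)\eta-s)\theta$ for $2^{l_{w-1}}$, which fails to be negative whenever $\Re(z)\le 0$ (equivalently $s\le n+(N-s)\eta$), a case the theorem must cover. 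So the Cotlar--Stein decay you invoke simply need not hold in the plateau regime.

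The paper avoids this obstacle by not attempting almost orthogonality across $l_{w-1}$ at all. It instead observes that the plateau constraint $\prod_t|x-\alpha_{i_t}y^{\eta}|\le C_0$ forces, for each fixed $\sigma$-pattern, the whole sum $\sum_{l_{w-1}}W^{\sigma_0,\cdots,\sigma_{w-1}}_{j,k,l_0,\cdots,l_{w-1}}$ to be supported in a thin curved region with cross-sections
\begin{equation*}
\Delta x\lesssim C_0^{1/s},\qquad \Delta y\lesssim C_0^{1/s}\bigl(|\alpha_{i_s}|2^{k(\eta-1)}\bigr)^{-1}.
\end{equation*}
It then produces two direct $L^{2}$ bounds for this summed operator --- one via the Schur test on the thin region, the other via the trivial cross-sections $\Delta x\lesssim 2^{j}$, $\Delta y\lesssim 2^{k}$ --- and takes a convex combination with parameter $\theta=\tfrac12+\gamma$, \emph{not} $2\gamma$. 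The algebra then uses the identity $\gamma=\tfrac{s}{s+2}\Re(z)+\tfrac{\theta}{s+2}$ (with this new $\theta$) to verify that the $2^{j}$- and $2^{k}$-exponents cancel and the $\alpha$-weights combine into $(\prod_{t\notin\Theta}|\alpha_t|)^{-\gamma}$. In short: same two-regime skeleton, correct polynomial-regime analysis, but the plateau regime requires a genuinely different argument (summation plus Schur test with a different interpolation parameter), not a transplanted Cotlar--Stein estimate.
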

\begin{proof}
With $2^j \approx |\alpha_{i_s}|2^{k\eta}$, $S''_{xy}$ behaves like
$ A\prod_{t=1}^{s}(x-\alpha_{i_t}y^{\eta})$ on the support of $W_{j,k}$.
As in the proof of Theorem \ref{Theorem L2 damping estimate}, we can decompose $W_{j,k}$ as
$$W_{j,k}
=\sum W_{j,k,l_0,l_1,\cdots,l_{w-1}}^{\sigma_0,\sigma_1,\cdots,\sigma_{w-1}}
.$$
On the support of each operator $W_{j,k,l_0,l_1,\cdots,l_{w-1}}^{\sigma_0,\sigma_1,\cdots,\sigma_{w-1}}$,
the Hessian $S''_{xy}$ (also $D$) is bounded from both below and above by the same bound up to a multiplicative constant.

If $D$ has size comparable to $\prod_{t=1}^s|x-\alpha_{i_t}y^{\eta}|$ on the support of $W_{j,k,l_0,l_1,\cdots,l_{w-1}}^{\sigma_0,\sigma_1,
\cdots,\sigma_{w-1}}$, then all its partial derivatives also have the same upper bounds as that of $\prod_{t=1}^s(x-\alpha_{i_t}y^{\eta})$. With this observation, the desired estimate can be proved by the same argument as in our proof of Theorem \ref{Theorem L2 damping estimate}.

It remains to consider these operators $W_{j,k,l_0,l_1,\cdots,l_{w-1}}^{\sigma_0,\sigma_1,\cdots,\sigma_{w-1}}$
for which $D$ has size
$$|D(x,y)|\approx \Big(
|\lambda| |\alpha_{i_s}|^{-1} 2^{-k(\eta-1)}
A\Big)^{ -\frac{s}{s+2} }.$$
In other words, it suffices to prove that the desired estimate holds if
\begin{equation}\label{size of damping factor}
\prod_{t=1}^{s}|x-\alpha_{i_t}
y^{\eta}|
\leq
\Big(
|\lambda| |\alpha_{i_s}|^{-1} 2^{-k(\eta-1)}
A\Big)^{ -\frac{s}{s+2}}.
\end{equation}
Observe that the cut-off function of $W_{j,k,l_0,l_1,\cdots,l_{w-1}}^{\sigma_0,\sigma_1,
\cdots,\sigma_{w-1}}$ contains a factor $\Phi\left( \sigma_{w-1}\frac{x-\alpha_{i_t}y^{\eta}}{2^{l_{w-1}}} \right)$
for some $\alpha_{i_t}$. Since $\alpha_{i_1},\cdots,\alpha_{i_s}$ have equivalent sizes and $y\in [2^{k-1},2^{k+1}]$, we have
$$|x-\alpha_{i_t}y^{\eta}|\leq
\Big(
|\lambda| |\alpha_{i_s}|^{-1} 2^{-k(\eta-1)}
A\Big)^{ -\frac{1}{s+2}}$$
in the support of $\sum_{l_{w-1}}W_{j,k,l_0,l_1,\cdots,l_{w-1}}^{\sigma_0,\sigma_1,
\cdots,\sigma_{w-1}}$. Here the summation is taken over all $l_{w-1}$ for which (\ref{size of damping factor}) holds in the support of $W_{j,k,l_0,l_1,\cdots,l_{w-1}}^{\sigma_0,\sigma_1,
\cdots,\sigma_{w-1}}$. Hence we have the following estimates of cross sections for $x$ and $y$
\begin{equation*}
\Delta x
\lesssim
\Big(|\lambda| |\alpha_{i_s}|^{-1} 2^{-k(\eta-1)}
A\Big)^{ -\frac{1}{s+2}},~~~~~~
\Delta y
\lesssim
\Big(|\lambda| |\alpha_{i_s}|^{-1} 2^{-k(\eta-1)}
A\Big)^{ -\frac{1}{s+2}}
\left( |\alpha_{i_s}| 2^{k(\eta-1)} \right)^{-1/2}.
\end{equation*}
The Schur test gives
\begin{equation*}
\left\|
\sum\nolimits_{l_{w-1}}
W_{j,k,l_0,l_1,\cdots,l_{w-1}}^{\sigma_0,\sigma_1,\cdots,\sigma_{w-1}}
f
\right\|_{L^2}
\leq
C \Big(
|\lambda| |\alpha_{i_s}|^{-1} 2^{-k(\eta-1)}
A\Big)^{ -\frac{1}{s+2}(1+s\Re(z))}
\Big( |\alpha_{i_s}| 2^{k(\eta-1)}\Big)^{ -\frac{1}{2} }
\|f\|_{L^2}.
\end{equation*}
On the other hand, $x\approx 2^j$ and $y\approx 2^k$ in the support of the operator considered above. Thus we also have
\begin{equation*}
\l\|
\sum_{l_{w-1}}
W_{j,k,l_0,l_1,\cdots,l_{w-1}}^{\sigma_0,\sigma_1,\cdots,\sigma_{w-1}}
f
\r\|_{L^2}
\leq
C \Big(
|\lambda| |\alpha_{i_s}|^{-1} 2^{-k(\eta-1)}
A\Big)^{ -\frac{s}{s+2}\Re(z)}
2^{j/2}2^{k/2}
\|f\|_{L^2}.
\end{equation*}
Let $\gamma$ be given by (\ref{damping decay estimate}) and $\theta=\frac12+\gamma$. Note that $\Re(z)=\gamma-\frac{n+(N-s)\eta}{s}\gamma
=\gamma-\frac1s(\frac12-\gamma)$. Hence $\gamma=\frac{s}{s+2}\Re(z)+\frac{\theta}{s+2}$.
A convex combination of the above estimates yields
\begin{equation*}
\l\|
\sum_{l_{w-1}}
W_{j,k,l_0,l_1,\cdots,l_{w-1}}^{\sigma_0,\sigma_1,\cdots,\sigma_{w-1}}
f
\r\|_{L^2}
\leq
C
\Big(
|\lambda| |\alpha_{i_s}|^{-1} 2^{-k(\eta-1)} A
\Big)^{-\gamma}
\Big(
|\alpha_{i_s}| 2^{k(\eta-1)}
\Big)^{-\theta/2}
\Big(
2^{j/2} 2^{k/2}
\Big)^{1-\theta}\|f\|_{L^2},
\end{equation*}
where the exponents of $2^j$ and $2^k$ are given by\\

\noindent{~~~~~$2^j$}:~~$a_j=-t_r\gamma+\frac12(1-\theta)$,\\

\noindent{~~~~~$2^k$}:~~$b_k=[(\eta-1)-n-(N-t_{r+1})\eta]\gamma
-(\eta-1)\theta/2
+(1-\theta)/2.$\\

\noindent By direct calculation, we have $\eta a_j+b_k=0$. In fact, using $\theta=\frac12+\gamma$, $1-\theta=\frac12-\gamma$ and $s=t_{r+1}-t_r$, one can see that $\eta a_j+b_k$ equlas
\begin{eqnarray*}
&&
-t_r\gamma\eta+\frac12(1-\theta)\eta+[(\eta-1)-n-(N-t_{r+1})\eta]\gamma
-(\eta-1)\theta/2
+(1-\theta)/2\\
&=&
[(\eta-1)-n-(N-t_{r+1})\eta-t_r\eta]\gamma
+
\frac12 \cdot \Big(\frac12-\gamma \Big)\cdot \eta
-
(\eta-1)\cdot \Big(\frac12+\gamma \Big)\cdot\frac12
+
\Big(\frac12-\gamma \Big)\cdot\frac12\\
&=&
[(\eta-1)-n-(N-s)\eta]\gamma
+
\frac12 \cdot \Big(\frac12-\gamma \Big)\cdot \eta
-
\frac12\cdot \Big(\frac12+\gamma \Big)\cdot \eta
+
\frac12\cdot \Big(\frac12+\gamma \Big)
+
\frac12\cdot \Big(\frac12-\gamma \Big)\\
&=&
\eta\gamma-\frac12-\gamma\eta+\frac12\\
&=&
0.
\end{eqnarray*}
Since sizes of $\alpha_{i_t}$ are equivalent, the bound concerning $\alpha_{i_t}$ in the resulting estimate is given by
\begin{eqnarray*}
& &|\alpha_{i_s}|^{\gamma}
|\alpha_{i_s}|^{-t_r\gamma}
\left(\prod_{t=t_{r+1}+1}^N |\alpha_t|\right)^{-\gamma}
|\alpha_{i_s}|^{-\theta/2}
|\alpha_{i_s}|^{(1-\theta)/2}\\
&\leq&
|\alpha_{i_s}|^{\gamma}
|\alpha_{i_s}|^{-t_r\gamma}
\left(\prod_{t=t_{r+1}+1}^N |\alpha_t|\right)^{-\gamma}
|\alpha_{i_s}|^{1/2-\theta}~~~~~\textrm{\rm with~~$\theta=\frac12+\gamma$}\\
&\leq&
\left(\prod_{t\notin\Theta} |\alpha_t|\right)^{-\gamma}
~~~~~\textrm{since}~~\Theta=\{i_1,i_2,\cdots,i_s\}=\{t_r+1,t_r+2,\cdots,t_{r+1}\}.
\end{eqnarray*}
Combining above results, we have completed the proof of the theorem.
\end{proof}

\section{Damped Oscillatory Integral Operators on $H_E^1$}

In this section, we shall establish uniform $H_E^1\rightarrow L^1$ estimates for damped oscillatory integral operators in Section 3. In general, these operators are not bounded from $H_E^1$ into $L^1$. However, we can decompose them into three parts such that each part has desired properties.

Assume $S$ is a real-valued polynomial in $\bR^2$ and its Hessian $S''_{xy}$ can be written as
$$
S''_{xy}(x,y)=x^my^n\prod_{i=1}^{N}\big(x-\alpha_iy^{\eta}\big)
$$
with $\alpha_1,\alpha_2,\cdots,\alpha_N\in \mathbb{C}\backslash\{0\}$ and $\eta>0$. Here we also assume $|\alpha_1|\leq |\alpha_2|\leq \cdots |\alpha_N|$. Let $W_z$ be a damped oscillatory integral operator of the form (\ref{damping OIO}). In this section, we shall modify the damping factor in Theorem \ref{Theorem L2 damping estimate}. First choose $s$ indices $1\leq i_1<i_2<\cdots<i_s \leq N$ such that $\prod_{t=1}^s(x-\alpha_{i_t}y^{\eta})$ is conjugate invariant in the sense of Remark \ref{sec2 remark on factorization}. The damping factor $D$ is defined as follows.

\noindent{$\bullet$
If either $m>0$ or $\mathop{\max}_{ t }|\alpha_{i_t}-\alpha_{i_s}|\geq \frac{|\alpha_{i_s}|}{4}$ is true, we assume $z$ has real part $\Re(z)=-\frac{1}{m+s}$. The damping factor $D$ is defined as in Theorem \ref{Theorem L2 damping estimate}, i.e.,}
$D(x,y)=
x^m\prod_{t=1}^{s}\big(x-\alpha_{i_t}y^{\eta}\big).$

\noindent{$\bullet$
If $m=0$ and $\mathop{\max}_{t}|\alpha_{i_t}-\alpha_{i_s}|< \frac{|\alpha_{i_s}|}{4}$,
we must have $\{i_1,i_2,\cdots,i_s\} \subseteq \{t_r+1,t_r+2,\cdots,t_{r+1}\}$ for some $0\leq r \leq a$ with $t_0=0$. Here $\{t_1,t_2,\cdots,t_{a}\}$ is the set of indices defined by (\ref{sec4 exceptional index set}). In this case, it is more convenient to consider the following special case \begin{equation}\label{New index set Theta}
\Theta :=\{i_1,i_2,\cdots,i_s\}=\{t_r+1,t_r+2,\cdots,t_{r+1}\}.
\end{equation}
With this revision, it should be pointed out that $\max_{t}
|\alpha_{i_t}-\alpha_{i_s}|< |\alpha_{i_s}|/4$ may not still hold. But this does not affect our final results. Now we shall define damping factors $D_{j,k}$ for each $W_{j,k}$, defined by (\ref{definition of Ujk}), as follows. }
\begin{equation}\label{sec4 def Djk}
D_{j,k}(x,y)=
\begin{cases}
~~~~~\prod_{t=1}^{s}\big(x-\alpha_{i_t}y^{\eta}\big),
~~~\textrm{if~~$2^j \geq  |\alpha_{i_s}|2^{\eta(k+1)+2}$~or~
$2^j \leq |\alpha_{i_1}|2^{\eta(k-1)-2}$};\\
\Big(
|\lambda| |\alpha_{i_s}|^{-1} 2^{-k(\eta-1)}
A\Big)^{ -\frac{s}{s+2} }
+
\prod_{t=1}^{s}\big|x-\alpha_{i_t}y^{\eta}\big|,~~~\textrm{otherwise;}
\end{cases}
\end{equation}
where $A$ is the constant in (\ref{defn of A}).\\

As a variant of the classical Hardy space $H^1$, we shall define the space $H_E^1$ associated with the phase $\lambda S$ and the set $\Theta$ considered above; see Phong-Stein \cite{PS1986}, Pan \cite{pan}, Greenleaf-Seeger \cite{greenleafseeger1}, Shi-Yan \cite{ShiYan} and Xiao \cite{Xiao2017} for earlier work related to this space.
\begin{defn}\label{sec4 def a variant of H1}
Let $I_k=[2^{k-1},2^{k+1}]$ and fix an index $u\in\Theta$. Associated with $\lambda S$ and $\Theta$, we say that a Lebesgue measurable function $\mathbf{a}$ is an atom in $H_E^1(I_k)$, if there exists an interval $I\subseteq I_k$ with the following three properties:\\

{\rm(i)} \supp$(\mathbf{a})$ $\subseteq I_k$;\\

{\rm(ii)} $|\mathbf{a}(x)|\leq |I|^{-1}$, a.e. $x\in I$; \\

{\rm(iii)} $\int e^{i\lambda S(\Re(\alpha_{u})c_I^{\eta},y)}\mathbf{a}(y)dy=0$ with $c_I$ the center of $I$;\\

\noindent where $\Re(\alpha_{u})$ is the real part of $\alpha_u$. The space $H_E^1(I_k)$ consists of $f\in L^1$ of form $f=\sum_j \lambda_j\mathbf{a}_j$. Here $\{\mathbf{a}_j\}$ is a sequence of atoms in $H_E^1(I_k)$, and $\{\lambda_j\}$ satisfies $\sum|\lambda_j|<\infty$. The norm of $f$ in $H_E^1(I_k)$ is defined by
\begin{equation*}
\|f\|_{H_E^1}
:=
\inf\l\{
\sum|\lambda_j|:~f=\sum_{j\in\mathbb{Z}} \lambda_j\mathbf{a}_j,~\mathbf{a_j}~atoms~in~H_E^1(I_k)
\r\}.
\end{equation*}
\end{defn}

Now we state our main result in this section.
\begin{theorem}\label{Theorem main HE1-L1 estimate}
Let $W_{j,k}$ be defined as in {\rm (\ref{definition of Ujk})}, where $D$ is defined as above, and $z\in\mathbb{C}$ has real part $\Re(z)=-\frac{1}{m+s}$. If $\eta\geq 1$, then we can decompose $\mathbb{Z}^2$ into three disjoint subsets $\Delta_1,\Delta_2$ and $\Delta_3$ such that each operator $W_i=\sum_{(j,k)\in \Delta_i}W_{j,k}$ satisfies
\begin{equation*}
\|W_1f\|_{L^{1,\infty}}\leq C\|f\|_{L^1},~~~
\|W_3f\|_{L^1}\leq C\|f\|_{L^1}
\end{equation*}
and, for $f\in H_E^1(I_k)$ and $(j,k)\in \Delta_2$,
\begin{equation*}
\|W_{j,k}f\|_{L^1}\leq C(1+|z|)\|f\|_{ H_E^1  },
\end{equation*}
where the above constants $C$ depend only on $deg(S)$ and $\varphi$.
\end{theorem}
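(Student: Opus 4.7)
The plan is to decompose $\mathbb Z^2$ following the case split in the proof of Theorem \ref{Theorem L2 damping estimate}. Let $\Delta_2$ consist of those $(j,k)$ falling into Subcase (b) of Case (iii) with the special cluster configuration (\ref{New index set Theta}), equivalently the pairs for which the modified damping factor (second branch of (\ref{sec4 def Djk})) is in force, i.e.\ $|\alpha_{i_1}|2^{(k-1)\eta-2}<2^j<|\alpha_{i_s}|2^{(k+1)\eta+2}$. On $\Delta_2$ the Hardy space cancellation will be essential. Outside $\Delta_2$, the damping factor reduces to $x^m\prod_{t=1}^s(x-\alpha_{i_t}y^\eta)$, and I will assign pairs to $\Delta_1$ or $\Delta_3$ according to whether the resulting dyadic summation of the $W_{j,k}$ can be carried out in strong $L^1$ (collected in $\Delta_3$) or only in weak $L^1$ by a maximal function argument (collected in $\Delta_1$).

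For $(j,k)\notin\Delta_2$, on each of Cases (i), (ii), and Subcase (a) of Case (iii), $|D|^{m+s}$ is comparable to a product of the scales $2^j$ and $|\alpha_{i_t}|2^{k\eta}$ whose geometric mean dominates $2^j$. Combined with $\Re(z)=-\frac{1}{m+s}$, this yields the uniform kernel bound $\int|K_{j,k}(x,y)|\,dx\leq C$, hence $\|W_{j,k}\|_{L^1\to L^1}\leq C$. For $\Delta_3$ I will use the near-disjointness of the $x$-supports of the $W_{j,k}$'s (together with the almost-disjointness of their $y$-supports across $k$) to sum them in strong $L^1$, producing $\|W_3 f\|_{L^1}\leq C\|f\|_{L^1}$. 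For $\Delta_1$, the residual pairs have $x$-supports that overlap in a manner only compatible with a pointwise maximal function domination $|W_1 f(x)|\leq C\mathbf M f(x)$, from which the weak-type estimate $\|W_1 f\|_{L^{1,\infty}}\leq C\|f\|_{L^1}$ follows at once.

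The heart of the proof lies in $\Delta_2$. Given an atom $\mathbf a$ in $H_E^1(I_k)$ supported in $I\subseteq I_k$ with $\int e^{i\lambda S(x_0,y)}\mathbf a(y)\,dy=0$, where $x_0=\Re(\alpha_u)c_I^\eta$, I set $H(x,y)=|D_{j,k}(x,y)|^z\varphi(x,y)\Phi(x/2^j)\Phi(y/2^k)$ and exploit the cancellation by writing
\begin{equation*}
W_{j,k}\mathbf a(x)=\int\bigl[e^{i\lambda S(x,y)}-e^{i\lambda S(x_0,y)}\bigr]H(x,y)\mathbf a(y)\,dy+\int e^{i\lambda S(x_0,y)}\bigl[H(x,y)-H(x,c_I)\bigr]\mathbf a(y)\,dy.
\end{equation*}
The first integral I bound by $\min(2,|\lambda||S(x,y)-S(x_0,y)|)\cdot|H(x,y)|$ (noting $|x-x_0|\lesssim 2^j$ since $|x_0|\approx 2^j$ on $\Delta_2$); the second by $|I|\sup_{y\in I}|\partial_y H(x,y)|$ via the mean value theorem. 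The additive floor $(|\lambda||\alpha_{i_s}|^{-1}2^{-k(\eta-1)}A)^{-s/(s+2)}$ in $D_{j,k}$ ensures, through Lemma \ref{sec2 poly type func lemma} applied in the $y$-variable, that both $|D_{j,k}|^z$ and $\partial_y|D_{j,k}|^z$ are controlled on the support of $W_{j,k}$, with the $(1+|z|)$ factor emerging from the chain rule applied to $|D_{j,k}|^z$. Integrating in $x$ over $[2^{j-1},2^{j+1}]$ and using $\|\mathbf a\|_\infty\leq|I|^{-1}$ then yields $\|W_{j,k}\mathbf a\|_{L^1}\leq C(1+|z|)$. The main obstacle is the calibration: the exponent $-s/(s+2)$ in the floor is the unique choice for which the $|\lambda|$-growth of the oscillatory term in the first integral is exactly absorbed by the $|I|\cdot|\partial_y H|$-decay of the second, paralleling the $L^2$ balancing of Theorem \ref{Theorem L2 damping esti revision}. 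Secondary issues include reducing complex $\alpha_{i_t}$ to their real parts (which is why the atom condition employs $\Re(\alpha_u)$ in place of $\alpha_u$ and why conjugate invariance of $\Theta$ is imposed) and verifying that the implicit constants retain the uniform form (\ref{uniform constant}).
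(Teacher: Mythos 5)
Your outer scaffolding is aligned with the paper: the tripartite split of $\mathbb Z^2$ by the relative sizes of $2^j$ and $|\alpha_{i_s}|2^{k\eta}$, the weak-type $(1,1)$ bound on $\Delta_1$ via a maximal-function domination, and the strong $L^1$ bound on $\Delta_3$ from Fubini and near-disjointness are all correct and match Step~1 of the paper's proof. The genuine gap is in the $H_E^1\to L^1$ estimate on $\Delta_2$, which is the entire content of the theorem.

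Your split of $W_{j,k}\mathbf a$ into an oscillatory difference plus a kernel difference is algebraically valid, but the first term cannot be closed the way you propose. You bound $|e^{i\lambda S(x,y)}-e^{i\lambda S(x_0,y)}|$ by $\min(2,|\lambda||S(x,y)-S(x_0,y)|)$. Since $S(x,y)-S(x_0,y)=\int_{x_0}^x S_x'(u,y)\,du$ is a \emph{single} difference in $x$ — it involves $S_x'$, not the Hessian $S_{xy}''$, and it does not vanish as $y\to c_I$ — the atom's small $y$-support buys you nothing here. The quantity $|\lambda|\,|S(x,y)-S(x_0,y)|$ exceeds $1$ on all but a vanishingly small fraction of the $x$-interval once $|\lambda|$ is large, so the $\min$ is just $2$ there, and you are left with $\int_{2^{j-1}}^{2^{j+1}}\sup_y|H(x,y)|\,dx\approx\int\bigl(E+\prod_t|x-\alpha_{i_t}y^\eta|\bigr)^{-1/s}\,dx$. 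When the $\alpha_{i_t}$ are clustered — which is precisely the configuration that forces $(j,k)$ into $\Delta_2$ — this integral is $\approx 1+\log(2^j/E^{1/s})$ and \emph{diverges} as $|\lambda|\to\infty$. Nothing in your proposal absorbs this logarithm. The second term has an analogous defect: after the MVT you face $\int_{2^{j-1}}^{2^{j+1}}|I|\sup_y|\partial_y H(x,y)|\,dx$, which grows with $|I|$ without a compensating gain, since you have forfeited the chance to trade off against the smallness of the first term at the scale of the cross-section.

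The paper's argument is substantially more layered for this exact reason. It first factors out the $y$-independent phases $e^{i\lambda S(x,c_I)}$ and $e^{-i\lambda S(x_0,c_I)}$ \emph{before} invoking the atom cancellation, converting the single difference into the double difference $\int_{x_0}^x\int_{c_I}^y S_{xy}''(\xi,\zeta)\,d\xi\,d\zeta\lesssim A\,|x-x_0|^{s+1}|I|$; the extra factor $|x-x_0|^{s}$ is precisely what defeats the logarithm. Even then, a single estimate does not close: the paper (a) resolves the singularity by decomposing $W_{j,k}=\sum_{l_0,\sigma_0}W^{\sigma_0}_{j,k,l_0}$; (b) controls the contribution near $x_0$ — the interval $J$ — by Cauchy--Schwarz against the $L^2$ bound (\ref{sec4 an L2 esti}); (c) establishes the H\"{o}rmander-type kernel condition (\ref{sec4 Hormander cond}) for $2^{l_0}>\max\{E^{1/s},F\}$; and (d) splits the remaining atom-cancellation term at a further threshold $\mu$ with $|\lambda|A\mu^{s+1}|I|=1$, using the operator van der Corput Lemma \ref{operator van der Corput} on the large-$l_0$ tail. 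Your single-scale MVT bound replaces all of (a)--(d), and the calibration you cite for the floor exponent $-s/(s+2)$ was tuned to balance the $L^2$ estimates in Theorem \ref{Theorem L2 damping esti revision}; it does not by itself make your $L^1$ balancing work. As written, the proposal does not constitute a proof.
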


\begin{proof}
Assume $\varphi$ is supported in $|x|,|y|<1/2$. Our proof will be divided into in two steps.\\

$\mathbf{Step~1.~~m>0~or~|\alpha_{i_t}-\alpha_{i_s}|\geq \frac{|\alpha_{i_s}|}{4}~for~some~1\leq t \leq s.}$\\

$\mathbf{{Case~(i)}}$~$2^j>|\alpha_{i_s}|2^{(k+1)\eta+2}.$\\

The set of all these $(j,k)$ is denoted by $\Delta_1$. Then $W_1$ satisfies
$\|W_1f\|_{L^{1,\infty}}\leq C\|\varphi\|_{\infty}\|f\|_{L^1}.$\\

$\mathbf{{Case~(ii)}}$~$|\alpha_{i_s}| 2^{(k-1)\eta-2}\leq 2^j \leq |\alpha_{i_s}| 2^{(k+1)\eta+2}.$\\

Let $\Delta_2$ be the set of these $(j,k)$. Then $\|W_2f\|_{L^{1}}\leq C\|\varphi\|_{\infty}\|f\|_{L^1}$. This can be verified by Fubini's theorem.

In fact, we have
\begin{eqnarray*}
\int_{\bR}|W_2f(x)|dx
&\leq&
C\int_{\bR}\int_{|\alpha_{i_s}||y|^{\eta}\approx |x|}|D(x,y)|^{-1/(m+s)}
|\varphi(x,y)||f(y)|dydx\\
&\leq&
C\|\varphi\|_{\infty}\int_{|y|<1/2}\l(\int_{|x|\approx|\alpha_{i_s}||y|^{\eta}}
|D(x,y)|^{-1/(m+s)}dx\r)|f(y)|dy\\
&\leq&
C\|\varphi\|_{\infty}\int_{|y|<1/2}\l(\int_{|x|\approx 1}
|D(x,|\alpha_{i_s}|^{-1/\eta})|^{-1/(m+s)}dx\r)
|f(y)|dy\\
&\leq &
C\|\varphi\|_{\infty}\|f\|_{L^1},
\end{eqnarray*}
where we have used the assumption that either $m>0$ or $|\alpha_{i_t}-\alpha_{i_s}|>|\alpha_{i_s}|/4$ for some $t$. All of the above constants $C$ depend only on $m,s,\eta$.\\

$\mathbf{{Case~(iii)}}$~$2^j<|\alpha_{i_s}|2^{(k-1)\eta-2}.$\\

Let $\Delta_3$ be the set of these pairs $(j,k)$. By Fubini's theorem, we can prove that each $W_{j,k}$ is bounded on $L^1$. A change of variables yields
\begin{eqnarray*}
\int |W_{j,k}f(x)|dx
&\leq&
\int\int_{|\alpha_{i_s}|y^{\eta}>2|x|}|D(x,y)|^{ -\frac{1}{m+s}} |\varphi(x,y)||f(y)|dydx\\
&\leq &
C \|\varphi\|_{\infty} \int_{|y|<1/2}\l(\int_{|x|<1/2}
|D(x,|\alpha_{i_s}|^{-1/\eta})|^{ -\frac{1}{m+s}} dx\r) |f(y)| dy\\
&\leq&
C \|\varphi\|_{\infty} \|f\|_{L^1}.
\end{eqnarray*}

Combining all results in the above three cases, we have completed the proof of the theorem for Step 1.\\

$\mathbf{Step~2.~~\Theta=\{t_r+1,\cdots,t_{r+1}\}.}$\\

Let $\Delta_1,\Delta_2,\Delta_3$ be defined as in Step 1. By the same argument as above, we have $\|W_1f\|_{L^1}\lesssim \|f\|_{L^1}$ and $\|W_3f\|_{L^1}\lesssim \|f\|_{L^1}$, where the implicit constants depending only on $deg(S)$ and the cut-off $\varphi$.

Now we turn to the desired estimate associated with $\Delta_2$. As in Section 3, we shall first treat real roots, and then consider complex roots.\\

\textbf{Case (i) All $a_{i_t}$'s are real numbers.}

By the method of resolution of singularities in Section 3, we have
\begin{equation*}
W_{j,k}
=
\sum W_{j,k,l_0,\cdots,l_{w-1}}^{\sigma_0,\cdots,\sigma_{w-1}},~~~(j,k)\in\Delta_2,
\end{equation*}
where each term in the summation is of a class $\mathcal{G}:=\{G_{i,1},G_{i,2},G_{i,3}\}_{i=0}^w$. Similarly, these three tuples satisfy the properties in Section 3, i.e.,
(i) $G_{i,u}\cap G_{i,v}=\emptyset$ for $u\neq v$;
(ii) If $G_{i-1,3}$ is not empty, then $G_{i,2}\neq \emptyset$;
(iii) $G_{i-1,3}=G_{i,1}\cup G_{i,2}\cup G_{i,3}$ and $G_{w,3}=\emptyset$.

For any three-tuple $\mathcal{G}:=\{G_{i,1},G_{i,2},G_{i,3}\}_{i=0}^w$ with length $w\geq 2$, we claim that
\begin{equation}\label{sec4 w not less than 2 bounded on L1}
\Big\|
\sum\nolimits_{\mathcal{G}}
W_{j,k,l_0,\cdots,l_{w-1}}^{\sigma_0,\cdots,\sigma_{w-1}}f
\Big\|_{L^1}
\leq
C(deg(S),\varphi) \|f\|_{L^1}.
\end{equation}
Here the summation is taken over all operators of class $\mathcal{G}$. This method of resolution of singularities, described as in Section 3, implies that (i) the number of $l_0,l_1,\cdots,l_{w-2}$ is bounded by a constant $C=C(\eta)$, and (ii) $l_{i-1}\leq l_i+C(\eta)$. Hence for $w\geq 2$ we have
\begin{equation}\label{sec4 lower bound for Djk}
|D_{j,k}(x,y)|\geq \prod_{t=1}^s |x-\alpha_{i_t}y^{\eta}|
\geq
C 2^{l_{w-2}(s-|G_{w,1}|-|G_{w,2}|)}
2^{l_{w-1}(|G_{w,1}|+|G_{w,2}|)}
\end{equation}
in the support of $W_{j,k,l_0,\cdots,l_{w-1}}^{\sigma_0,\cdots,\sigma_{w-1}}$. Our assumption $w\geq 2$ also implies $0<|G_{w,1}|+|G_{w,2}|<s$. It follows immediately that
\begin{equation*}
\left|\sum_{\mathcal{G}} W_{j,k,l_0,\cdots,l_{w-1}}^{\sigma_0,\cdots,\sigma_{w-1}}f(x) \right|
\leq
\sum_{\mathcal{G}} \int_{\bR}
\Phi\l(\sigma_{w-1}\frac{x-\alpha_{e_{w-1}}y^{\eta}  }{2^{l_{w-1}}}\r)
2^{-l_{w-2}a}
2^{-l_{w-1}b}|f(y)|dy
\end{equation*}
with $a=\frac{s-|G_{w,1}|-|G_{w,2}|}{s}>0$ and $b=\frac{|G_{w,1}|+|G_{w,2}|}{s}>0$. As a consequence, we see that
\begin{equation*}
\left\|\sum_{\mathcal{G}} W_{j,k,l_0,\cdots,l_{w-1}}^{\sigma_0,\cdots,\sigma_{w-1}} f \right\|_{L^1}
\lesssim
\sum_{l_{w-1}\leq l_{w-2}+C(\eta)}2^{-l_{w-2}a}
2^{l_{w-1}a}\|f\|_{L^1}
\lesssim
\|f\|_{L^1}.
\end{equation*}

Assume $\mathcal{G}:=\{G_{i,1},G_{i,2},G_{i,3}\}_{i=0}^w$ is a three-tuple with length $w=1$. If $|G_{1,2}|<s$, then the above estimate (\ref{sec4 w not less than 2 bounded on L1}) is also true. Under this assumption, we have
\begin{equation*}
\min_{t\in G_{1,1}}|\alpha_t-\alpha_{e_0}|2^{k\eta}
\geq
2^{l_0+N_0(\eta)}.
\end{equation*}
This implies that $D_{j,k}$ has a lower bound similar to (\ref{sec4 lower bound for Djk}), i.e,,
\begin{equation*}
|D_{j,k}(x,y)|\geq \prod_{t=1}^s |x-\alpha_{i_t}y^{\eta}|
\geq
C \left(\min_{t\in G_{1,1}} |\alpha_t-\alpha_{e_0}|2^{k\eta} \right)^{|G_{1,1}|}
2^{l_{0}|G_{1,2}|}.
\end{equation*}
The same argument as in the case $w\geq2$ gives the desired $L^1$ estimate.

To establish the $H_E^1\rightarrow L^1$ estimate, it suffices to consider the simplest three-tuple $\mathcal{G}$ for which $w=1$ and $G_{1,2}=G_{0,3}=\Theta$. In other words, we have to show
\begin{equation}\label{sec4 reduced H1L1 esti}
\left\|  \sum_{\mathcal{G}} W_{j,k,l_0}^{\sigma_0} f    \right\|_{L^1}
\leq
C(deg(S),\varphi)
\|  f  \|_{H_E^1(I_k)}.
\end{equation}
For those small $l_0$ satsifying
\begin{equation}\label{sec4 def of E}
2^{l_0s}\leq E:=\left(  |\lambda| |\alpha_{i_s}|^{-1} 2^{-k(\eta-1)} A \right)^{-\frac{s}{s+2}  },
\end{equation}
we shall prove $\sum_{\mathcal{G},2^{l_0s}\leq E} W_{j,k,l_0}^{\sigma_0}$ is also bounded on $L^1$. The proof is similar to the above case $w\geq 2$. In fact, the smallness condition on $l_0$ implies
\begin{equation*}
\left| \sum_{\mathcal{G},2^{l_0s}\leq E} W_{j,k,l_0}^{\sigma_0} f(x) \right|
\lesssim
\sum_{2^{l_0s}\leq E}\int_{\bR}\Phi\l(\sigma_{0}\frac{x-\alpha_{e_{0}}y^{\eta}  }{2^{l_{0}}}\r)
E^{-1/s}|f(y)|dy
\end{equation*}
from which it follows that
\begin{equation*}
\left\| \sum_{\mathcal{G},2^{l_0s}\leq E} W_{j,k,l_0}^{\sigma_0} f \right\|_{L^1}
\lesssim
\sum_{2^{l_0s}\leq E} 2^{l_0} E^{-1/s} \|f\|_{L^1}
\lesssim \|f\|_{L^1}.
\end{equation*}
With this estimate, we see that it remains to show (\ref{sec4 reduced H1L1 esti})
with the summation being taken over $\mathcal{G}$ and $2^{l_0}> E^{1/s}$.

First, we claim that
\begin{equation}\label{sec4 an L2 esti}
\left\| \sum_{\mathcal{G},2^{l_0s}> E} W_{j,k,l_0}^{\sigma_0} f \right\|_{L^2}
\leq
C(deg(S),\varphi)\Big(
|\alpha_{i_s}|^{-1}2^{-k(\eta-1)}
\Big)^{1/2}
\|f\|_2.
\end{equation}
Observe that each operator $W_{j,k,l_0}^{\sigma_0}$ is supported in the horizontally and vertically convex domain $\Omega_{j,k,l_0}^{\sigma_0}$, defined as in (\ref{sec3 HV domain support of Wjkl}). Here we also assume $\Phi(x)>0$ on $(\frac12,2)$, as in the definition (\ref{sec3 HV domain support of Wjkl}). The above $L^2$ estimate follows from both Lemma \ref{operator van der Corput} and \ref{Lemma almost orth van der Corput}. For example, we now deduce the $L^2$ estimate from Lemma \ref{operator van der Corput}. Recall that $\Omega_{j,k,l_0}^{\sigma_0}$ consists of all points $(x,y)$ satisfying
\begin{equation*}
2^{j-1}\leq x \leq 2^{j+1},~~~
2^{k-1}\leq y \leq 2^{k+1},~~~
2^{l_0-1}\leq \sigma_0(x-\alpha_{e_0}y^{\eta}) \leq 2^{l_0+1}.
\end{equation*}
It is easy to see that $\Omega_{j,k,l_0}^{\sigma_0}$ is a curved-trapezoid in Definition \ref{sec2 curved trapezoid} with the roles of $x$ and $y$ interchanged. For example, if $\sigma_0=+$ then $\Omega_{j,k}^{l_0}$ can be written as $2^{k-1}\leq y \leq 2^{k+1}$ and $g(y)\leq x \leq h(y)$ with $g$ and $h$ given by
\begin{equation*}
g(y)=\max\{2^{j-1},2^{l_0-1}+\alpha_{e_0}y^{\eta}\}~~~{\rm and}~~~
h(y)=\min\{2^{j+1},2^{l_0+1}+\alpha_{e_0}y^{\eta}\}.
\end{equation*}
One easily verifies that all assumptions in Lemma \ref{operator van der Corput} are true for each operator appearing in the above $L^2$ estimate. Hence we have
\begin{eqnarray*}
\sum_{\mathcal{G},2^{l_0s}> E} \|W_{j,k,l_0}^{\sigma_0} f\|_{L^2}
&\leq&
C\sum_{\mathcal{G},2^{l_0s}> E}\Big(|\lambda| 2^{l_0s} A \Big)^{-1/2}2^{-l_0} \|f\|_{L^2}\\
&\leq&
 C\Big(|\lambda|  A\Big)^{-1/2} E^{-\frac{s+2}{2s}  } \|f\|_{L^2}\\
&=&C\Big(|\alpha_{i_s}|^{-1}2^{-k(\eta-1)}\Big)^{1/2}
\|f\|_2.
\end{eqnarray*}

Assume $\textbf{a}$ is an $H_E^1(I_k)$ atom. Then for some interval $I\subseteq I_k$, the properties (i), (ii) and (iii) in Definition \ref{sec4 def a variant of H1} are true. Let $J$ be the interval defined by
\begin{equation*}
J=\{x\in\bR:~|x-\alpha_{e_0}c_I^{\eta}| \leq C(\eta)|\alpha_{i_s}| 2^{k(\eta-1)} |I| \},
\end{equation*}
where $c_I$ is the center of $I$, and $C(\eta)$ is a suitably large constant depending only on $\eta$.

By the Cauchy-Schwarz inequality, we can make use of the $L^2$ estimate (\ref{sec4 an L2 esti}) to obtain
\begin{eqnarray*}
\left\| \sum_{\mathcal{G},2^{l_0s}> E} W_{j,k,l_0}^{\sigma_0} a \right\|_{L^1(J)}
&\leq&
|J|^{1/2}
\left\| \sum_{\mathcal{G},2^{l_0s}> E} W_{j,k,l_0}^{\sigma_0} a \right\|_{L^2}\\
&\leq&
C\Big(|\alpha_{i_s}| 2^{k(\eta-1)} |I|\Big)^{1/2}   \Big(|\alpha_{i_s}| 2^{k(\eta-1)} \Big)^{-1/2}|I|^{-1/2}\\
&\leq& C.
\end{eqnarray*}
With this $L^1$ estimate over $J$, it is enough to prove a weaker form of the desired estimate,
\begin{equation}\label{sec 4 last reduced H1L1 esti}
\left\| \sum_{\mathcal{G},~2^{l_0}> \max\{E^{1/s},F\}} W_{j,k,l_0}^{\sigma_0} a \right\|_{L^1}
\leq C(deg(S),\varphi),~~~F:=|\alpha_{i_s}| 2^{k(\eta-1)}|I|.
\end{equation}
The reason for this statement is that if $2^{l_0}\leq |\alpha_{i_s}| 2^{k(\eta-1)} |I|$ then $W_{j,k,l_0}^{\sigma_0} a$ is supported in $J$, provided that the constant $C(\eta)$, appearing in the definition of $J$, is sufficiently large. Let $K_{l_0}$ be the following kernel:
\begin{equation}\label{sec 4 defn of K}
K_{l_0}(x,y)= \Phi\Big( \frac{x}{2^j} \Big)
\Phi\Big( \frac{y}{2^k} \Big)
\Phi\Big(\sigma_0 \frac{x-\alpha_{e_0}y^{\eta}  }{2^{l_0}  } \Big)
\Big( D_{j,k}(x,y) \Big)^z \varphi(x,y).
\end{equation}
Now we are going to prove the following estimate:
\begin{equation}\label{sec4 Hormander cond}
\mathop{\sup}_{y\in I}
\int_{J^c} \sum_{\mathcal{G},~2^{l_0}> \max\{E^{1/s},F\}} |K_{l_0}(x,y)-K_{l_0}(x,c_I)|dx
\leq
C(deg(S),\varphi).
\end{equation}
Set
\begin{equation}\label{sec4 defn of K}
M_{j,k,l_0}(x,y)= \Phi\Big( \frac{x}{2^j} \Big)
\Phi\Big( \frac{y}{2^k} \Big)
\Phi\Big(\sigma_0 \frac{x-\alpha_{e_0}y^{\eta}  }{2^{l_0}  } \Big)
\varphi(x,y).
\end{equation}
Then in the support of $W_{j,k,l_0}^{\sigma_0}$, we can apply the mean value theorem to obtain
\begin{eqnarray*}
|M_{j,k,l_0}(x,y)-M_{j,k,l_0}(x,c_I)|
&\leq& \mathop{\sup}_{\xi\in I} |\partial_yM_{j,k,l_0}(x,\xi)|~|I|\\
&\leq&
C\left( 2^{-k} + |\alpha_{i_s}|2^{k(\eta-1)}2^{-l_0}  \right)|I|\\
&\leq&
C \frac{|\alpha_{i_s}|2^{k(\eta-1)} |I|}{2^{l_0}}
\end{eqnarray*}
since $k\leq 0$, $|\alpha_{e_0}|\approx |\alpha_{i_s}|$ and $l_0\leq |\alpha_{i_s}|2^{k\eta}+C(\eta)$. As in Section 2, we use $I_{\Omega_{j,k,l_0}^{\sigma_0}}(x)$ to denote the vertical cross-section $\{y:(x,y)\in \Omega_{j,k,l_0}^{\sigma_0}\}$. Without loss of generality, we assume $\Omega_{j,k,l_0}^{\sigma_0}$ is nonempty. Define an expanded domain $\Omega_{j,k,l_0}^{\sigma_0\ast}$ as in (\ref{sec3 expanded domain general}) with some small $\epsilon=\epsilon(\eta)>0$. A simple but useful observation is that if $I_{\Omega_{j,k,l_0}^{\sigma_0}}(x)$ is nonempty, then we have
\begin{equation*}
{\delta}_{\Omega_{j,k,l_0}^{\sigma_0\ast}}(x)
:=\Big|I_{\Omega_{j,k,l_0}^{\sigma_0\ast}}(x)\Big|
\approx\frac{2^{l_0}}{|\alpha_{i_s}|2^{k(\eta-1)}}.
\end{equation*}
Now we shall prove this observation. Assume $(x,y_0)\in I_{\Omega_{j,k,l_0}^{\sigma_0}}(x)$. For this fixed $x$, the expanded interval $I_{\Omega_{j,k,l_0}^{\sigma_0\ast}}(x)$ is the intersection of $I_1=(2^{k-1}-\epsilon 2^k,~2^{k+1}+\epsilon2^k)$ and the following interval
\begin{eqnarray*}
I_2
&=&
\{y>0:~2^{l_0-1}-\epsilon 2^{l_0}\leq \sigma_0(x-\alpha_{e_0}y^{\eta}) \leq 2^{l_0+1}
+\epsilon 2^{l_0}\}.
\end{eqnarray*}
If one of the endpoints of $I_1$, say $2^{k-1}-\epsilon 2^k$, belongs to $I_2$, then $(2^{k-1}-\epsilon 2^k,y_0)\subseteq I_1\cap I_2$. Thus $I_{\Omega_{j,k,l_0}^{\sigma_0\ast}}(x)$ has length $\gtrsim 2^k\gtrsim 2^{l_0}/(|\alpha_{e_0}|2^{k(\eta-1)})$. Now assume both endpoints of $I_1$ do not lie in $I_2$. It follows from $I_1\cap I_2\neq \emptyset$ that $I_2\subseteq I_1$. On the other hand, the $y$-derivative of the function $\sigma_0(x-\alpha_{e_0}y^{\eta})$ has size comparable to $|\alpha_{e_0}|2^{k(\eta-1)}$ on $I_1$. This implies $$|I_2|\approx 2^{l_0}/(|\alpha_{e_0}|2^{k(\eta-1)}).$$

Since $\prod_{t\in\Theta}(x-\alpha_{t}y^{\eta})\in \bR[x,y]$ and $|D_{j,k}(x,y)|\approx 2^{l_0s}$ for $l_0$ satisfying $2^{l_0s}>E$, we can apply Lemma \ref{sec2 poly type func lemma} to obtain
\begin{eqnarray*}
& &\left|\Big(D_{j,k}(x,y)\Big)^z - \Big(D_{j,k}(x,c_I)\Big)^z \right|\\
&\leq&
C |z| \mathop{\sup}_{\xi\in I\cap I_{\Omega_{j,k,l_0}^{\sigma_0}}(x)}
\Big(D_{j,k}(x,\xi)\Big)^{\Re(z)-1}
\mathop{\sup}_{\xi\in I\cap I_{\Omega_{j,k,l_0}^{\sigma_0}}(x)}
|\partial_y D_{j,k}(x,\xi)|~|I|\\
&\leq&
C |z| \mathop{\sup}_{\xi\in I_{\Omega_{j,k,l_0}^{\sigma_0\ast}}(x)}\Big(D_{j,k}(x,y)\Big)^{\Re(z)-1}
\Big({\delta}_{\Omega_{j,k,l_0}^{\sigma_0\ast}}(x) \Big)^{-1}
\mathop{\sup}_{\xi\in I_{\Omega_{j,k,l_0}^{\sigma_0\ast}}(x)} |D_{j,k}(x,\xi)|~|I|\\
&\leq&
C|z|\frac{|\alpha_{i_s}|2^{k(\eta-1)}|I|}{2^{2l_0}}.
\end{eqnarray*}
It follows from the above two estimates of $M_{j,k,l_0}$ and $D_{j,k}$ that
\begin{eqnarray*}
\sum_{\mathcal{G},~2^{l_0}> \max\{E^{1/s},F\}} \int_{J^c}|K_{l_0}(x,y)-K_{l_0}(x,c_I)|dx
&\leq&
C\sum_{\mathcal{G},~2^{l_0}> F} 2^{-l_0}\Big(|\alpha_{i_s}|2^{k(\eta-1)} |I| \Big)\\
&\leq& C.
\end{eqnarray*}
Here $F=|\alpha_{i_s}|2^{k(\eta-1)} |I|$ has been defined as in (\ref{sec 4 last reduced H1L1 esti}).

With the estimate (\ref{sec4 Hormander cond}), we can deduce
(\ref{sec 4 last reduced H1L1 esti}) from the following estimate:
\begin{equation*}
\sum_{\mathcal{G},~2^{l_0}> \max\{E^{1/s},F\}}
\int_{J^c}|K_{l_0}(x,c_I)|\l| \int_{I} e^{i\lambda S(x,y)} \textbf{a}(y) dy \r|dx
\leq C(deg(S),\varphi).
\end{equation*}
By the vanishing property of the atom $\textbf{a}$,
\begin{eqnarray*}
\l|\int_Ie^{i\lambda S(x,y)}\textbf{a}(y)dy\r|
&=&
\l|\int_I\l(e^{i\lambda[S(x,y)-S(x,c_I)]}
-
e^{i\lambda[S(\alpha_{i_u}c_I^{\eta},y)-S(\alpha_{i_u}c_I^{\eta},c_I)]}\r)
\textbf{a}(y)dy\r|\\
&\leq&
C|\lambda| \int_{I}
\Big|\int_{\alpha_{i_u}c_I^{\eta}}^x \int_{c_I}^y \partial_{x}\partial_{y}S(\xi,\zeta)d\xi d\zeta \Big|~|\textbf{a}(y)|dy\\
&\leq&
C|\lambda|~A~|x-\alpha_{i_u}c_I^{\eta}|^{s+1}~|I|,
\end{eqnarray*}
where $x\in J^c$, $y\in I$, and $(x,y)$ also lies in the support of $W_{j,k,l_0}^{\sigma_0}$. Choose a positive number $\mu$ such that
$|\lambda|A\mu^{s+1}|I|=1$. Then it is clear that
\begin{eqnarray*}
\sum_{\mathcal{G},~2^{l_0}> \max\{E^{1/s},F\},2^{l_0}\leq \mu}
\int_{J^c}|K_{l_0}(x,c_I)|\l| \int_{I} e^{i\lambda S(x,y)} \textbf{a}(y) dy \r|dx
\leq
C\sum_{2^{l_0}\leq \mu} |\lambda|2^{l_0(s+1)}A|I|\leq C.
\end{eqnarray*}

To estimate the above summation with $2^{l_0}\leq \mu$ replaced by $2^{l_0}>\mu$, we shall apply the operator version of van der Corput lemma. We shall mention that both Lemma \ref{operator van der Corput} and Lemma \ref{Lemma almost orth van der Corput} are applicable now. First observe that if $2^{l_0}>F=|\alpha_{i_s}|2^{k(\eta-1)}|I|$, then $|x-\alpha_{e_0}y^{\eta}|\approx |x-\alpha_{e_0}c_I^{\eta}|$ in $\Omega_{j,k,l_0}^{\sigma_0}$. This implies $|K_{l_0}(x,c_I)|\approx |K_{l_0}(x,y)|$. On the other hand, for $2^{l_0}>F=|\alpha_{i_s}|2^{k(\eta-1)}|I|$, we see that all horizontal cross-sections of $\Omega_{j,k,l_0}^{\sigma_0}$ is contained in the interval $(-2^{l_0+N_0}+\alpha_{e_0}c_I^{\eta},~~2^{l_0+N_0}+\alpha_{e_0}c_I^{\eta})$ for some large $N_0=N_0(\eta)\geq 1$. By Cauchy-Schwarz's inequality, we can apply Lemma \ref{operator van der Corput} to obtain
\begin{eqnarray*}
& &\sum_{l_0}
\int_{J^c}|K_{l_0}(x,c_I)|\l| \int_{I} e^{i\lambda S(x,y)} \textbf{a}(y) dy \r|dx\\
&\lesssim&
\sum_{l_0}
\int_{\bR}\l|\int_{\bR} e^{i\lambda S(x,y)} K_{l_0}(x,y) \textbf{a}(y) dy\r|dx\\
&\lesssim&
\sum_{l_0} 2^{l_0/2}
\left(\int_{\bR}\l|\int_{\bR} e^{i\lambda S(x,y)} K_{l_0}(x,y) \textbf{a}(y) dy\r|^2dx \right)^{\frac12}\\
&\lesssim&
\sum_{l_0}
2^{l_0/2} 2^{-l_0} \Big(  |\lambda| 2^{l_0s} A \Big)^{-1/2}|I|^{-\frac12}\\
&\leq& C,
\end{eqnarray*}
where all of the above summations are taken over $l_0$ in the same range described as above, i.e., $2^{l_0}>\max\{E^{1/s},F,\mu\}$.\\

$\mathbf{Case~(ii).~At~least~one~of~\alpha_{i_1},\cdots,\alpha_{i_s}~is~complex. }$\\

If $|\Re(\alpha_{i_t})| \leq |\Im(\alpha_{i_t})|$ for some $t$, say $t_0$, then $W_{j,k}$ is bounded on $L^1$. In fact, the kernel of
$W_{j,k}$ is bounded by $C\prod_{t=1}^{s}(x-\alpha_{i_t}y^{\eta})$. By the Riesz rearrangement inequality, for each $y\in[2^{k-1},2^{k+1}]$, we have
\begin{eqnarray*}
\int_{2^{j-1}}^{2^{j+1}}
\prod_{t=1}^{s}|x-\alpha_{i_t}y^{\eta}|^{-\frac1s}
dx
&\lesssim&
\l(|\Im(\alpha_{i_{t_0}})|2^{k\eta}\r)^{-\frac1s}
\int_{2^{j-1}}^{2^{j+1}}
\prod_{t\neq t_0} |x-\alpha_{i_t}y^{\eta}|^{-\frac1s}dx\\
&\lesssim&
2^{-j/s}
\int_{|x|\leq 2^{j+1} }
|x|^{-\frac{s-1}{s}}dx<\infty.
\end{eqnarray*}
Hence we have $\|W_{j,k}f\|_{L^1}\lesssim \|f\|_{L^1}$.

Now assume $|\Re(\alpha_{i_t})| > |\Im(\alpha_{i_t})|$ for all $t$.
The treatment is similar to that of $\mathbf{Step~2}$ in our proof of Theorem \ref{Theorem L2 damping estimate}. We first apply the method of resolution of singularities for complex roots in Section 3 to obtain $W_{j,k}=\sum W_{j,k,l_0,\cdots,l_{w-1}}^{\sigma_0,\cdots,\sigma_{w-1}}$.
Let $\mathcal{G}=\{G_{i,1},G_{i,2},G_{i,3}\}_{i=0}^w$ be a sequence of three tuples satisfying the three properties described in Case (i). By the same argument as above, we are able to prove the $L^1$ estimate (\ref{sec4 w not less than 2 bounded on L1}) under one of the following assumptions:\\

\noindent
$\bullet$ $w\geq 2$.\\
$\bullet$ $w=1$, $|G_{1,1}|>0$, and $|G_{1,2}|>0$.\\
$\bullet$ $w=1$, $|G_{1,1}|=0$, and $2^{l_0s}\leq E$, where $E$ is given by (\ref{sec4 def of E}).\\
$\bullet$ $w=1$, $|G_{1,1}|=0$, $2^{l_0s}>E$, and $2^{l_0}\leq \max_{t}|\Im(\alpha_{i_t})|2^{k\eta}$.\\

\noindent It is enough to consider the case $w=1$, $|G_{1,1}|=0$, $2^{l_0s}>E$, and $2^{l_0}> \max_{t}|\Im(\alpha_t)|2^{k\eta}$. Then it is easy to see that $|D_{j,k}(x,y)|\approx 2^{l_0s}$. Moreover, the argument in Case (i) applies without essential change. Therefore the proof of the theorem is complete.
\end{proof}

\begin{remark}\label{Remark H1L1 estimate for small eta}
As in Section 3, we can treat the case $\eta<1$ by writing $S''_{xy}$ as in {\rm(\ref{Hessian of phase II})}. Similarly, we shall define the damping factor $D$ in the first case, i.e., either $m>0$ or $\max_t|\beta_{i_t}-\beta_{i_s}|\geq \frac{|\beta_{i_s}|}{4}$, as
$D(x,y)=x^m\prod_{t=1}^s(x^{\nu}-\beta_{i_t}y)$
with $\Re(z)=-1/(m+s\nu)$.
While for the second case, i.e., $m=0$ and $\max_t|\beta_{i_t}-\beta_{i_s}|< \frac{|\beta_{i_s}|}{4}$, we take $z$ with $\Re(z)=-\frac{1}{s\nu}$ and define $D_{j,k}$ by
\begin{equation*}
D_{j,k}(x,y)=
\prod_{t=1}^{s}\big(x^{\nu}-\beta_{i_t}y\big).
\end{equation*}
It should be pointed out that our treatment for $\eta<1$ is much simpler than that for $\eta\geq1$. In fact, the following integral of Hilbert type is finite:
\begin{equation*}
\int_{|x|^{\nu}\approx |\beta_{i_s}y|}\left|x^{\nu}-\beta_{i_t}y\right|^{-1/\nu}dx
\leq
C(\nu),~~~\nu=\frac{1}{\eta}>1,
\end{equation*}
provided that $|\beta_{i_t}|\approx|\beta_{i_s}|$ for all $t$. Hence, by Fubini's theorem and H\"{o}lder's inequality, we see that $W_{j,k}$ is bounded on $L^1$ for each $(j,k)\in\Delta_2$. Other statements in Theorem {\rm\ref{Theorem main HE1-L1 estimate}} are also true for $\eta<1$.
\end{remark}

\section{Proof of the main result}
In this section, we shall prove Theorem \ref{Theorem Main result uniform Lp decay} by exploiting damping estimates established in Sections 3 and 4. We only consider $T_{\lambda}$ in the first quadrant. By a change of variables, $T_{\lambda}$ in other quadrants can be treated in the same way.\\

\begin{proof}
Throughout this section, we choose $s$ indices
$\{i_1,i_2,\cdots,i_s\}=\{1,2,\cdots,s\}.$
Then the damping factor $D$ in Theorem \ref{Theorem L2 damping estimate} is equal to
\begin{equation}\label{Special damping factor}
D(x,y)=x^m\prod_{t=1}^{s}(x-\alpha_t y^{\eta}).
\end{equation}
Recall that the coefficients $\alpha_i$ are ordered such that $|\alpha_1|\leq |\alpha_2|\leq \cdots \leq |\alpha_N|$.
As in Sections 3 and 4, we assume the integer $s$ satisfies $\prod_{t=1}^{s}(x-\alpha_t y^{\eta})\in\bR[x,y]$. This means that the above product is conjugate invariant in the sense of Remark \ref{sec2 remark on factorization}.

We first assume $\eta\geq 1$ and $m+s\geq n+(N-s)\eta$. As in Section 4, we shall divide our proof here into two steps.\\

$\mathbf{Step~1.~Either~m>0~or~|\alpha_{t}-\alpha_{s}|\geq \frac{|\alpha_{s}|}{4}~is~true~for~some~1\leq t \leq s.}$\\

Choose a bump function $\Phi$ as in the previous two sections. In other words, $\Phi\in C^{\infty}(\bR)$ satisfies (i) $\supp(\Phi)\subseteq [1/2,2]$, and (ii) $\sum_{j\in\mathbb{Z}}\Phi(x/2^j)=1$ for $x>0$. Define $W_{j,k}$ by
\begin{equation}\label{sec5 def of Wjk}
W_{j,k}f(x)=\int_{\bR}e^{i\lambda S(x,y)}|\widetilde{D}(x,y)|^z\varphi(x,y)
\Phi\l(\frac{x}{2^j}\r)\Phi\left(\frac{y}{2^k}\right)f(y)dy,
\end{equation}
where the damping factor $\widetilde{D}$ is slightly different from $D$, and $z\in \mathbb{C}$ lies in the following strip:
\begin{equation*}
-\frac{1}{m+s}
=:\alpha \leq
\Re(z)
\leq
\beta:=\frac{m+s-n-(N-s)\eta}{2[n+(N-s)\eta+1]}\cdot\frac{1}{m+s}.
\end{equation*}

$\mathbf{Case~(i)~2^j \geq |\alpha_{s}|2^{(k+1)\eta+2}.}$\\

Let $\Delta_1$ be the set of all $(j,k)\in\mathbb{Z}^2$ in $\mathbf{Case~(i)}$. In this case, we define $\widetilde{D}(x,y)=x^{m+s}$. Then $W_1=\sum_{(j,k)\in\Delta_1} W_{j,k}$ satisfies
\begin{equation*}
\|W_1f\|_{L^2}
\leq C (1+|z|^2)
\l(|\lambda| \prod_{k\notin\{1,\cdots,s\}}|\alpha_k|\r)^{-\gamma}
\|f\|_{L^2},~~~\Re(z)=\beta,~~~\gamma=\frac{1}{2(n+(N-s)\eta+1)}
\end{equation*}
and
\begin{equation*}
\|W_1f\|_{L^{1,\infty}}
\leq
C \|f\|_{L^1},
~~~\Re(z)=-\frac{1}{m+s},
\end{equation*}
where both constants $C$ depend only on $deg(S)$ and the cut-off $\varphi$. The above $L^2$ decay estimate for $W_1$ can be proved by the same argument as that of Theorem \ref{Theorem L2 damping estimate}. Indeed, for each $(j,k)\in\Delta_1$, it is easy to see that $\widetilde{D}$ (the partial derivatives of $\widetilde{D}$, respectively) has the same upper bounds, which were used in the proof of Theorem \ref{Theorem L2 damping estimate}, as the damping factor $D$ (the partial derivatives of $D$, respectively). Hence following the proof of Theorem \ref{Theorem L2 damping estimate}, we can establish the above $L^2$ decay estimate.
The $L^1\rightarrow L^{1,\infty}$ estimate for $W_1$ is obvious; see also Section 4.

By Lemma \ref{Lemma interpolaiton with change of weights}, with $T$ defined by $W_1f(x)=|x|^{(m+s)z}Tf(x)$, $a=(m+s)\beta$ and $p_0=2$, we obtain
\begin{equation}\label{Lp decay for W1}
\|W_1f\|_{L^p}\leq C (1+|z|^2)
\left(|\lambda|\prod_{k\notin\{1,\cdots,s\}}|\alpha_k|\right)
^{-\delta}\|f\|_{L^p}~~~{\rm with}~~~\Re(z)=0,
\end{equation}
where $\delta$ and $p$ are given by
\begin{equation}
\delta=\frac{1}{m+s+n+(N-s)\eta+2},
~~~~~p=\frac{m+s+n+(N-s)\eta+2}{m+s+1}.
\end{equation}
Here we choose a $\theta\in [0,1]$, appearing in Lemma \ref{Lemma interpolaiton with change of weights}, satisfies $\theta\beta+(1-\theta)\alpha=0$, i.e.,
\begin{equation}\label{sec5 convex para theta}
\theta=\frac{-\alpha}{\beta-\alpha}=2\frac{n+(N-s)\eta+1}{m+s+n+(N-s)\eta+2}.
\end{equation}

$\mathbf{Case~(ii)~|\alpha_{s}|2^{(k-1)\eta-2}<2^j
<|\alpha_{s}|2^{(k+1)\eta+2}}.$\\

The set of all $(j,k)\in\bZ^2$ satisfying $\mathbf{Case~(ii)}$ is denoted by $\Delta_2$. As in Section 4, we define $W_2=\sum_{\Delta_2}W_{j,k}$. In this case, put $\widetilde{D}(x,y)=D(x,y)$. Then it is also true that
\begin{equation*}
\|W_2f\|_{L^2}
\leq
C(1+|z|^2)\left( |\lambda| \prod_{k\notin \{1,\cdots,s\}}|\alpha_k|   \right)^{-\gamma}\|f\|_{L^2},~~~\gamma=\frac{1}{2[n+(N-s)\eta+1]}
\end{equation*}
for $z\in\mathbb{C}$ with $\Re(z)=\beta$. On the other hand, as in Section 4, it is easy to prove that $W_2$ satisfies
\begin{equation*}
\|W_2f\|_{L^1}\leq  C\|f\|_{L^1},~~~\Re(z)=\alpha=-\frac{1}{m+s}.
\end{equation*}
By the Stein complex interpolation theorem, for $z$ with real part $\Re(z)=0$, we see that $W_2$ also satisfies the estimate (\ref{Lp decay for W1}).\\

$\mathbf{Case~(iii)~2^j\leq |\alpha_{s}|2^{(k-1)\eta-2}}.$\\

We use the notation $\Delta_3$ to denote the set of all $(j,k)$ satisfying $\mathbf{Case~(iii)}$. Define $W_3=\sum_{\Delta_3}W_{j,k}$ with $\widetilde{D}(x,y)=D(x,y)$. As in the previous two cases, we can
prove that $W_3$ satisfies the same estimates as $W_2$ in $\mathbf{Case~(ii)}$. An interpolation yields the desired $L^p$ decay estimate (\ref{Lp decay for W1}) with $W_3$ in place of $W_1$.\\

$\mathbf{Step~2.~m=0~and~|\alpha_{t}-\alpha_{s}|< \frac{|\alpha_{s}|}{4}~for~all~1\leq t \leq s.}$\\

We can divide our proof into three cases as in $\mathbf{Step~1}$. The arguments for $\mathbf{Case~(i)}$ and $\mathbf{Case~(iii)}$ are the same as that of $\mathbf{Step~1}$. Now we turn to $\mathbf{Case~(ii)}$. For each fixed $(j,k)\in\Delta_2$, the number of $(j',k')\in\Delta_2$ such that the horizontal (also vertical) projections of the supports of $W_{j,k}$ and $W_{j',k'}$ have nonempty intersections is bounded by a constant $C=C(\eta)$. By the almost orthogonality in Lemma \ref{simple almost orthogonality principle}, there exists a constant $C=C(\eta)$ such that
\begin{equation*}
\|W_2f\|_{L^p}\leq C\mathop{\sup}_{(j,k)\in\Delta_2}\|W_{j,k}f\|_{L^p},~~~1\leq p \leq \infty.
\end{equation*}
Thus it suffices to prove that $W_{j,k}$ satisfies (\ref{Lp decay for W1}), for each $(j,k)\in \Delta_2$, with a constant $C$ independent of $(j,k)$.

As in Sections 3 and 4, let $\{t_1,t_2,\cdots,t_a\}$ be the set of all indices $1\leq i \leq N$ such that $|\alpha_{i+1}|/|\alpha_i|\geq 2^{4N_0}$ for some large $N_0=N_0(\eta)$, where $N_0(\eta)$ is a large positive integer.
If this set is empty, then the sizes of $\alpha_{1},~\alpha_{2},\cdots,\alpha_{s}$ are equivalent up to constants depending only on $\eta$. With some abuse of notation, we set $t_1=N$ if this set is empty, i.e., $|\alpha_{i+1}|/|\alpha_i|<2^{4N_0}$ for all $i$.

With the assumption $\max_t|\alpha_{t}-\alpha_{s}|< |\alpha_s|/4$, we claim that $\{1,2,\cdots,s\}$ is a subset of $\{1,2,\cdots,t_1\}$. If the set $\{t_1,t_2,\cdots,t_a\}$ is empty, then $t_1=N$ and our claim is obviously true. Otherwise if the set in consideration is nonempty and $t_1<s$, we would obtain $t_1,t_1+1\in \{1,2,\cdots,s\}$ and
$$\frac{|\alpha_s-\alpha_{t_1}|}{|\alpha_s|}
\geq 1-\frac{|\alpha_{t_1}|}{|\alpha_{t_1+1}|}\geq 1-2^{-4}.$$
This contradicts our assumption.

It is more convenient to consider first the special case $s=t_1$. By interpolation, we are able to show the $L^p$ estimate for general $s$.

The damping factor $\widetilde{D}$ for $W_{j,k}$ is defined by Theorem \ref{Theorem main HE1-L1 estimate}, i.e.,
\begin{equation*}
\widetilde{D}(x,y):=D_{j,k}(x,y)
=\Big(|\lambda| |\alpha_{t_1}|^{-1} 2^{-k(\eta-1)} A \Big)^{-\frac{t_1}{t_1+2} }
+
\prod_{t=1}^{t_1}|x-\alpha_ty^{\eta}|
\end{equation*}
with $A>0$ given by
\begin{equation*}
A=2^{kn} \Big( \prod_{t=t_1+1}^{N} |\alpha_t| 2^{k\eta} \Big).
\end{equation*}

By Theorem \ref{Theorem L2 damping esti revision}, we have for each $(j,k)\in \Delta_2$
\begin{equation*}
\|W_{j,k}f\|_{L^2}
\leq C(1+|z|^2)
\left(|\lambda| \prod_{t=t_1+1}^{N} |\alpha_t|\right)^{-\gamma}
\|f\|_{L^2},~~~\gamma=\frac{1}{2(n+(N-t_1)\eta+1)},
\end{equation*}
where $z$ has real part $\Re(z)=\beta$. Here the constant $C$ depends only on $deg(S)$ and the cut-off $\varphi$ as in (\ref{uniform constant}). On the other hand, it follows from Theorem \ref{Theorem main HE1-L1 estimate} that for each $(j,k)\in \Delta_2$
\begin{equation*}
\|W_{j,k}f\|_{L^1} \leq C(1+|z|)\|f\|_{H_E^1(I_k)},~~~\Re(z)=-\frac{1}{t_1},
\end{equation*}
where the constant $C$ depends only $deg(S)$ and the cut-off $\varphi$.

By interpolation between the above $L^2\rightarrow L^2$ and $H_E^1\rightarrow L^1$ estimates, we see that there exists a constant $C=C(deg(S),\varphi)$ such that
\begin{equation}\label{Lp esti for s=t1}
\mathop{\sup}\limits_{\Delta_2}\|W_{j,k}f\|_{L^p}
\leq C (1+|z|^2)
\left(|\lambda|\prod_{t=t_1+1}^{N} |\alpha_t|\right)^{-\delta}\|f\|_{L^p},~~~\Re(z)=0,
\end{equation}
where the supremum is taken over all $(j,k)$ in $\Delta_2$, and $\delta,~p$ are given by
\begin{equation*}
\delta=\frac{1}{t_1+n+(N-t_1)\eta+2},
~~~~~~~~p=\frac{t_1+n+(N-t_1)\eta+2}{t_1+1}.
\end{equation*}

Let $T_{\lambda,j,k}$ be defined as $W_{j,k}$ in (\ref{sec5 def of Wjk}) but with $z=0$. With the above estimate (\ref{Lp esti for s=t1}), we are able to prove the
following estimate:
\begin{equation}\label{sec5 desired est Tjk}
\mathop{\sup}_{(j,k)\in\Delta_2}\|T_{\lambda,j,k}f\|_{L^p}
\leq
C(deg(S),\varphi)\left(|\lambda|\prod_{t=s+1}^N |\alpha_t|\right)^{-\delta}
\|f\|_{L^p}
\end{equation}
with $\delta$ and $p$ given by
\begin{equation*}
\delta=\frac{1}{s+n+(N-s)+2},
~~~~
p=\frac{s+n+(N-s)+2}{s+1}.
\end{equation*}
However, to prove this estimate, we also need an $L^p$ estimate corresponding to $s=N$,
\begin{equation}\label{sec5 desired est Tjk with s=N}
\mathop{\sup}_{(j,k)\in\Delta_2}\|T_{\lambda,j,k}f\|_{L^p}
\leq
C\left(|\lambda|\prod_{t=1}^N |\alpha_t|\right)^{-\delta}
\|f\|_{L^p},~~~p=n+N\eta+2,~~~\delta=\frac{1}{n+N\eta+2}.
\end{equation}

Now we shall prove (\ref{sec5 desired est Tjk with s=N}), and then deduce (\ref{sec5 desired est Tjk}) from (\ref{Lp esti for s=t1}) and (\ref{sec5 desired est Tjk with s=N}). Rewrite the Hessian $S_{xy}''$ as
\begin{equation*}
S_{xy}''(x,y)=y^n\prod_{t=1}^{N}\l(x-\alpha_t y^{\eta}\r)
=(-1)^N\l(\prod_{t=1}^{N}\alpha_t\r)y^n\prod_{t=1}^{N}\l(y^{\eta}-\frac{1}{\alpha_t} x\r).
\end{equation*}
Let $T_{\lambda,j,k}^{t}$ be the transpose of $T_{\lambda,j,k}$, i.e.,
$\int T_{\lambda,j,k}f(x)g(x)dx=\int f(y)T_{\lambda,j,k}^{t}g(y)dy$ for all continuous functions $f,g$ with compact support. Let $\widetilde{W}_{j,k}$ be defined as $W_{j,k}$ in (\ref{sec5 def of Wjk}) with $\widetilde{D}(x,y)=y^n\prod_{t=1}^N(y^{\eta}-x/\alpha_t)$, and $\widetilde{W}_{j,k}^t$ the transpose of $\widetilde{W}_{j,k}$. It is clear that if $z=0$ then $T_{\lambda,j,k}^t=\widetilde{W}_{j,k}^t$. It should be pointed out that the difference between $W_{j,k}$ and $\widetilde{W}_{j,k}^t$ is that the Hessian $S_{xy}''(x,y)$, as a polynomial in $y$, is not monic in the latter case. If we incorporate the coefficient $(-1)^N\prod_{t=1}^{N}\alpha_t$ in the parameter $\lambda$, then we can apply the damping estimates in Sections 3 and 4 to the dual counterpart for (\ref{sec5 desired est Tjk with s=N}). By a duality argument, (\ref{sec5 desired est Tjk with s=N}) follows immediately.

In what follows, the estimate (\ref{sec5 desired est Tjk}) will be proved by an interpolation of (\ref{Lp esti for s=t1}) and (\ref{sec5 desired est Tjk with s=N}). Choose $\theta\in[0,1]$ such that
\begin{equation*}
\frac{s+1}{s+n+(N-s)\eta+2}
=
\frac{t_1+1}{t_1+n+(N-t_1)\eta+2}\cdot\theta
+
\frac{1}{n+N\eta+2}\cdot (1-\theta).
\end{equation*}
Set $A=n+N\eta+2$ and $B=\eta-1$. Subtracting $-\frac{1}{B}=-\frac{1}{\eta-1}$ from both sides of the above equality, we obtain
\begin{equation*}
\frac{A+B}{(A-Bs)B}=\frac{A+B}{(A-Bt_1)B}\cdot\theta
+
\frac{A+B}{AB}\cdot(1-\theta).
\end{equation*}
Hence we have
\begin{equation}\label{sec5 convexity equality}
\frac{1}{(A-Bs)}=\frac{1}{(A-Bt_1)}\cdot\theta
+
\frac{1}{A}\cdot(1-\theta).
\end{equation}
It follows that
\begin{equation*}
\frac{1}{(A-Bs)}-\frac{1}{A-Bt_1}
=
\left(\frac{1}{A}-\frac{1}{(A-Bt_1)}\right)
\cdot(1-\theta)
\end{equation*}
which implies
\begin{equation}\label{a convex equality}
\frac{t_1-s}{A-Bs}=\frac{t_1}{A}\cdot(1-\theta).
\end{equation}
By interpolation between the $L^p$ estimates (\ref{Lp esti for s=t1}) and (\ref{sec5 desired est Tjk with s=N}), we get
\begin{equation*}
\mathop{\sup}_{(j,k)\in\Delta_2}\|T_{\lambda,j,k}f\|_{L^p}
\leq
C(1+|z|^2)
\left( |\lambda| \prod_{t=t_1+1}^{N} |\alpha_t| \right)^{-\gamma_1\theta}
\left( |\lambda| \prod_{t=1}^{N} |\alpha_t|  \right)^{-\gamma_2(1-\theta)}
\|f\|_{L^p},
\end{equation*}
where $p$ is given by
\begin{equation*}
p=\frac{s+n+(N-s)\eta+2}{s+1}.
\end{equation*}
Here we use $\gamma_1$ and $\gamma_2$ to denote the decay exponents in (\ref{Lp esti for s=t1}) and (\ref{sec5 desired est Tjk with s=N}), respectively. With the $\theta$ considered above, the equality (\ref{sec5 convexity equality}) gives $\gamma_1\theta+\gamma_2(1-\theta)=1/(A-Bs)$ and
\begin{eqnarray*}
\left( \prod_{t=t_1+1}^{N} |\alpha_t| \right)^{-\gamma_1\theta}
\left( \prod_{t=1}^{N} |\alpha_t|  \right)^{-\gamma_2(1-\theta)}
&=&
\left( \prod_{t=t_1+1}^{N} |\alpha_t| \right)^{-1/(A-Bs)}
\left( \prod_{t=1}^{t_1} |\alpha_t|  \right)^{-\gamma_2(1-\theta)}\\
&\approx&
\left( \prod_{t=t_1+1}^{N} |\alpha_t| \right)^{-1/(A-Bs)}
|\alpha_{t_1}|^{-t_1\gamma_2(1-\theta)}\\
&\approx&
\left( \prod_{t=s+1}^{N} |\alpha_t| \right)^{-1/(A-Bs)},
\end{eqnarray*}
where we have used the equality (\ref{a convex equality}) to $|\alpha_{t_1}|^{-t_1\gamma_2(1-\theta)}$, and the assumption that $\alpha_1,\cdots,\alpha_{t_1}$ have equivalent sizes. This proves our claim (\ref{sec5 desired est Tjk}).

Now we turn to prove Theorem \ref{Theorem Main result uniform Lp decay}. First assume $k\geq l$. We can choose an integer $s$ such that one of the following two statements is true:

\noindent ~~(i)$~~$$m+s+1=k$~~and~~$n+(N-s)\eta+1=l$ if $\eta\geq 1$;

\noindent ~~(ii) $m+s\nu+1=k$~~and~~$n+M-s+1=l$ with $\nu=\eta^{-1}$ if $\eta<1$.

\noindent Here the notations $M$ and $\nu$ are defined as in Theorem \ref{Theorem Duality of damping L2 estimate}. It suffices to consider either $s\in\{t_1,t_2,\cdots,t_a\}$ or $s=N$. By a similar argument to our proof of (\ref{sec5 desired est Tjk}), the decay estimate in Theorem \ref{Theorem Main result uniform Lp decay} for other integers $s$ follows, by interpolation of estimates corresponding to the special integers $s$ just mentioned. Moreover, it is clear that the damping factor $D$ in (\ref{Special damping factor}) is conjugate invariant for $s\in\{t_1,t_2,\cdots,t_a\}$ and $s=N$. Thus all of the above arguments apply.

Without loss of generality, we assume that the coefficient $c_0$ ($d_0$, respectively), appearing in (\ref{Hessian of phase}) ((\ref{Hessian of phase II}), respectively), is equal to one. Since $|\alpha_1|\leq |\alpha_2|\leq \cdots\leq |\alpha_N|$ and
\begin{equation*}
a_{k,l}=(-1)^{\sigma}\cdot\frac{1}{k}\cdot\frac{1}{l}\cdot\sum_{1\leq j_1<j_2<\cdots<j_{N-s}\leq N}
\alpha_{j_1}\alpha_{j_2}\cdots\alpha_{j_{N-s}}
\end{equation*}
with $\sigma=N-s$ if $\eta\geq 1$, we see that $|a_{k,l}|$ is not greater than a constant multiple of the absolute value of the product of  $\alpha_{s+1},\alpha_{s+2},\cdots,\alpha_{N}$. By these results, $T_{\lambda}$ satisfies the $L^p$ decay estimate in Theorem \ref{Theorem Main result uniform Lp decay} for $k\geq l$. While for $k<l$, the desired estimate can be proved by a duality argument.

Combining all above results, we have established the desired $L^p$ estimate for $T_{\lambda}$ in (\ref{General OIO}) with $\eta\geq 1$. Similarly, we can deal with the case $\eta<1$ without essential change. In fact, using the above interpolation method, we are able to prove the desired decay estimate for $\eta<1$, by invoking Theorem \ref{Theorem Duality of damping L2 estimate} and Remark \ref{Remark H1L1 estimate for small eta}.
\end{proof}

\section{Higher dimensional oscillatory integral operators}
\label{Higher dimensional OIO}
In this section, we shall establish uniform sharp $L^p$ estimates for higher dimensional oscillatory integral operators. Our main tools are the one dimensional result in Theorem \ref{Theorem Main result uniform Lp decay} and a variant of Stein-Weiss interpolation with change of measures. As a consequence of the $L^p$ estimates in this section, we can extend the $L^2$ decay estimate in Tang \cite{tangwan} for $(2+1)$-dimensional oscillatory integral operators. At the end of this section, some counterexamples will be presented to clarify the necessity of our assumptions.

In this section, we shall consider higher dimensional oscillatory integral operators of the form:
\begin{equation}\label{Operator Higher dimensional OIO}
T_{\lambda}f(x)=\int_{\bR^{n_Y}}e^{  i\lambda S(x,y)  }
\varphi(x,y)
f(y)dy,~~~x\in \bR^{n_X},
\end{equation}
where $\lambda\in\bR$ is a parameter and $\varphi$ is a smooth cut-off. To establish uniform estimates for $T_{\lambda}$, we need $L^p$ decay estimates with change of cut-off functions. We state this as follows.

\begin{prop}\label{Proposition Operator with change of cut-offs}
Let $T_{\lambda}$ be an oscillatory integral operator defined as in {\rm (\ref{Operator Higher dimensional OIO})}. Assume $p\in [1,\infty]$. Then the following two statements are true.

\noindent~~{\rm (i)} Let $n_X=n_Y=1$. Assume that there exists a nonnegative number $\sigma$ such that
\begin{equation}\label{Decay estimate of Tlambda}
\|T_{\lambda}f\|_{L^p}\leq C|\lambda|^{-\sigma} \|f\|_{L^p}
\end{equation}
for all smooth cut-off functions $\varphi\in C_0^{\infty}(\bR\times\bR)$ with a constant $C$ independent of $\lambda$. Then this decay estimate is also true for $T_{\lambda}$ with the cut-off function $\varphi(x,y)=\psi(x^{\gamma_1},y^{\gamma_2})\chi_{\bR^+}(x)
\chi_{\bR^+}(y)$, where $\psi\in C_0^{\infty}(\bR^2)$, $\gamma_1,\gamma_2>0$, and $\chi_{\bR^+}$ denotes the characteristic function of $\bR^+$.

\noindent~~{\rm (ii)} Assume $n_X,n_Y\geq 1$. Then the decay estimate {\rm(\ref{Decay estimate of Tlambda})} holds for all $\varphi\in C_0^{\infty}( \bR^{n_X} \times \bR^{n_Y} )$ if and only if it is true for all $x$-radial and $y$-radial $\varphi\in C_0^{\infty}( \bR^{n_X}\times \bR^{n_Y})$.
\end{prop}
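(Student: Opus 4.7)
I would prove the two parts separately. The forward direction of part (ii) is trivial since radial cutoffs form a sub-class; parts (i) and the reverse direction of (ii) require genuine arguments.

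For part (i), my plan is a smooth approximation argument. Fix $\Phi \in C^\infty(\bR)$ nondecreasing with $\Phi \equiv 0$ on $(-\infty,0]$ and $\Phi \equiv 1$ on $[1,\infty)$, and set
\begin{equation*}
\varphi_\epsilon(x,y) := \psi(x^{\gamma_1},y^{\gamma_2})\,\Phi(x/\epsilon)\,\Phi(y/\epsilon).
\end{equation*}
Because $\Phi(x/\epsilon)$ vanishes on a neighborhood of the axis $x=0$---the only place where $x^{\gamma_1}$ may fail to be smooth for non-integer $\gamma_1$---each $\varphi_\epsilon$ lies in $C_0^\infty(\bR^2)$. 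Applying the hypothesis to $\varphi_\epsilon$ yields $\|T_\lambda^{\varphi_\epsilon}f\|_{L^p}\le C_\epsilon|\lambda|^{-\sigma}\|f\|_{L^p}$. Since $\varphi_\epsilon \to \varphi$ pointwise a.e.\ and is dominated by $|\psi(x^{\gamma_1},y^{\gamma_2})|\chi_{\bR^+\times\bR^+}$ (bounded with compact support), dominated convergence gives $T_\lambda^{\varphi_\epsilon}f \to T_\lambda^{\varphi}f$ pointwise, and Fatou's lemma then delivers the desired estimate \emph{provided} $C_\epsilon$ stays uniformly bounded as $\epsilon \downarrow 0$. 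Securing this uniformity is the main obstacle: one must check that the constant in the hypothesis depends on the cutoff only through quantities that are stable under our approximation (e.g.\ $\|\varphi\|_\infty$ and the size of $\supp(\varphi)$), not through derivative norms that blow up with $\epsilon$. Alternatively, a dyadic decomposition of $\chi_{\bR^+}$ combined with rescaling---in the spirit of Sections 3--4---estimates each dyadic piece uniformly and sums.

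For the nontrivial direction of part (ii), my plan is a spherical-harmonic decomposition. Fix orthonormal bases $\{Y_k\}$, $\{Y'_l\}$ of $L^2(S^{n_X-1})$, $L^2(S^{n_Y-1})$ consisting of spherical harmonics, and expand
\begin{equation*}
\varphi(x,y) = \sum_{k,l} a_{k,l}(|x|,|y|)\,Y_k(x/|x|)\,\overline{Y'_l(y/|y|)}.
\end{equation*}
Each radial profile $a_{k,l}(|x|,|y|)$ is a radial cutoff, so the hypothesis supplies constants $C_{k,l}$ with $\|\widetilde T_{\lambda,k,l}g\|_{L^p}\le C_{k,l}|\lambda|^{-\sigma}\|g\|_{L^p}$, where $\widetilde T_{\lambda,k,l}$ is $T_\lambda$ equipped with the radial cutoff $a_{k,l}$. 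Pulling the angular factors outside,
\begin{equation*}
T_\lambda^\varphi f(x) = \sum_{k,l} Y_k(x/|x|)\cdot \widetilde T_{\lambda,k,l}\!\left(\overline{Y'_l(\cdot/|\cdot|)}\,f\right)(x),
\end{equation*}
and using that multiplication by a bounded angular factor is $L^p$-bounded with norm at most $\|Y_k\|_\infty$ (resp.\ $\|Y'_l\|_\infty$), I obtain
\begin{equation*}
\|T_\lambda^\varphi f\|_{L^p} \le \Big(\sum_{k,l}\|Y_k\|_\infty\|Y'_l\|_\infty C_{k,l}\Big)|\lambda|^{-\sigma}\|f\|_{L^p}.
\end{equation*}
The main obstacle is summability of this series. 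Smoothness of $\varphi$ forces $a_{k,l}$ (and its radial derivatives) to decay faster than any polynomial in $(k,l)$---one integrates by parts many times against the spherical Laplacians in the angular variables, producing powers of the eigenvalues as denominators. This rapid decay beats both the polynomial growth of $\|Y_k\|_\infty$, $\|Y'_l\|_\infty$ and any polynomial dependence of $C_{k,l}$ on angular $C^m$-norms of $a_{k,l}$, closing the sum and completing the reduction.
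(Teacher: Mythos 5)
The approach here differs from the paper's, and unfortunately both halves of your argument founder on the same unresolved issue, which you identify in part (i) but underestimate in part (ii): the hypothesis only guarantees a constant $C=C_{\varphi}$ \emph{for each fixed cut-off}, with no control on how this constant varies with $\varphi$. In part (i), you honestly flag that ``one must check that the constant in the hypothesis depends on the cutoff only through quantities that are stable under our approximation'' --- but this is precisely what the hypothesis does \emph{not} supply, and neither the $\epsilon$-approximation nor the briefly mentioned dyadic alternative closes the gap, since each rescaled piece $\Phi(x/2^j)\psi(\cdot)$ is a genuinely different cut-off with its own unknown constant $C_j$. In part (ii), your spherical-harmonic decomposition produces a sum $\sum_{k,l}\|Y_k\|_\infty\|Y'_l\|_\infty C_{k,l}$ in which each $C_{k,l}$ is an arbitrary constant attached to the radial profile $a_{k,l}$; you assert that the Schwartz-type decay of $a_{k,l}$ beats ``any polynomial dependence of $C_{k,l}$,'' but the hypothesis states no polynomial (or any other) dependence of $C_{k,l}$ on norms of $a_{k,l}$, and linearity of $T_\lambda$ in the cut-off only rescales $C_{k,l}$ by the amplitude of $a_{k,l}$, leaving the growth of the normalized constant completely uncontrolled.

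The paper's proof sidesteps this entirely by using a \emph{multiplicative} rather than additive decomposition. It writes $\psi(x^{\gamma_1},y^{\gamma_2})=\omega\bigl(\sqrt{x^{2\gamma_1}+y^{2\gamma_2}}/a\bigr)\sum_{k,l}a_{k,l}e^{i(k\frac{\pi}{2a}x^{\gamma_1}+l\frac{\pi}{2a}y^{\gamma_2})}$ via periodic extension and Fourier series, where $\omega$ is a single radial bump that is identically $1$ near the origin. Because the Fourier factor $e^{i(k\frac{\pi}{2a}x^{\gamma_1}+l\frac{\pi}{2a}y^{\gamma_2})}$ can be absorbed into the phase of $T_\lambda$ as pure-$x$ plus pure-$y$ terms, which leave the $L^p$ operator norm unchanged (one is a unimodular multiplier on the output side, the other on the input side), every term in the series involves the \emph{same} fixed smooth cut-off $\omega(\cdot)$. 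So a single application of the hypothesis gives one constant $C$, and the absolute convergence $\sum|a_{k,l}|<\infty$ closes the argument; the same mechanism handles part (ii). This is exactly what your spherical-harmonic decomposition lacks: the angular factors $Y_k$, $Y'_l$ are not unimodular, cannot be shifted into the phase, and the radial profiles $a_{k,l}$ are genuinely distinct cut-offs. If you want to salvage a harmonic-analytic route in (ii), you would need to reformulate the hypothesis so that the constant is quantified over a norm-bounded family of cut-offs (which is how the proposition is actually invoked downstream, where the constant from Theorem \ref{Theorem Main result uniform Lp decay} has an explicit dependence), but as the statement is written, the multiplicative Fourier trick is the clean way through.
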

\begin{remark}
This proposition turns out to be very useful in our treatment of uniform estimates for oscillatory integral operators. Its proof is very simple and invoke the Fourier expansion for smooth periodic functions.
\end{remark}
\begin{proof}
We first prove (i). Choose a radial function $\omega\in C_0^{\infty}(\bR^2)$ such that $\omega(x,y)=1$ for $(x,y)\in [-1,1]^2$ and $\omega=0$ outside the square $[-2,2]^2$. Since $\psi$ is a smooth cut-off, we can choose the smallest positive number $a>0$ such that $\supp(\psi)\subseteq [-a,a]^2$. For convenience, we shall extend $\psi$ periodically outside the larger rectangle $[-2a,2a]^2$; we use $\widetilde{\psi}$ to denote this periodic extension. Since $\widetilde{\psi}$ is smooth, we have the following Fourier expansion:
\begin{equation*}
\widetilde{\psi}(x,y)=\sum_{k,l\in\bZ} a_{k,l}e^{i(k\frac{\pi} {2a}x+l\frac{\pi} {2a}y)},
\end{equation*}
where $a_{k,l}$ are the Fourier coefficients given by
$$a_{k,l}=\frac{1}{(4a)^2}\int_{[-2a,2a]^2}\widetilde{\psi}(x,y) e^{-i(k\frac{\pi} {2a}x+l\frac{\pi} {2a}y)}dxdy.$$
By our choice of $\omega$, we have $\psi(x,y)=\omega(x/a,y/a)\widetilde{\psi}(x,y)$. It follows immediately from the above expansion that
\begin{equation*}
\psi(x,y)
=\omega\l( \frac{x}{a},\frac{y}{a} \r)
\l(
\sum\nolimits_{k,l\in\bZ} a_{k,l}e^{i(k\frac{\pi} {2a}x+l\frac{\pi} {2a}y)}
\r).
\end{equation*}
Since $\omega$ is radial, we may write $\omega(x,y)=\omega(\sqrt{x^2+y^2})$ with some abuse of notation. For clarity, we write $T_{\lambda}$ as $T_{\lambda}(S,\varphi)$ to emphasize its dependence on the phase $S$ and the cut-off $\varphi$. Then it follows from the above equality that
\begin{equation*}\label{Operator with change of cut-offs}
T_{\lambda}\Big(S,\psi(x^{\gamma_1},y^{\gamma_2})
\Big)f(x)
=
\sum\nolimits_{k,l\in\bZ}a_{k,l}T_{\lambda}\left( S+\frac{k}{\lambda}\frac{\pi} {2a}x^{\gamma_1}+\frac{l}{\lambda}\frac{\pi} {2a}y^{\gamma_2},
\omega\l(\frac{\sqrt{x^{2\gamma_1}+y^{2\gamma_2}}}{a}\r) \right)f(x)
\end{equation*}
for $f\in L^p$ satisfying $\supp(f)\subseteq\bR^+$. On the other hand, the pure $x,y$ terms in the phase of $T_{\lambda}$ do not affect the operator norm of $T_{\lambda}$. Since $\omega=1$ near the origin, $\omega\l(\frac{\sqrt{x^{2\gamma_1}+y^{2\gamma_2}}}{a}\r)$ is a smooth cut-off function. The rapid decay of $a_{k,l}$ implies $\sum|a_{k,l}|<\infty$. By our assumption, the $L^p$ operator norms of operators in the above summation have a uniform power decay bound $C|\lambda|^{-\sigma}$. This implies that
\begin{equation*}
\l \|T_{\lambda}\Big(S,\psi(x^{\gamma_1},y^{\gamma_2})\chi_{\bR^+}(x)
\chi_{\bR^+}(y)\Big) \r \|_{L^p \rightarrow L^p}
\leq
C|\lambda|^{-\sigma}
\end{equation*}
with the constant $C$ independent of $\lambda$.

The second statement can be proved in the same way as above. Indeed, following the above argument, we can show that every $\varphi\in C_0^{\infty}( \bR^{n_X} \times \bR^{n_Y} )$ can be written as the product of an absolutely convergent Fourier series and an $x$-radial and $y$-radial smooth cut-off in $\bR^{n_X}\times\bR^{n_Y}$. On the other hand, it is easy to see that insertion of each term $e^{i \frac{2\pi}{T} \overrightarrow{k}\cdot (x,y)}$ in the cut-off of $T_{\lambda}$ does not affect the $L^p$ operator norm of $T_{\lambda}$. Thus the statement (ii) follows immediately.
\end{proof}

Assume $S\in\bR[x,y]$ is a weighted homogeneous polynomial of the form (\ref{Definition Polynomial phase}). For two real numbers $\eta_1,\eta_2\geq 1$, define
\begin{equation}\label{Operator T lambda eta12}
T_{\lambda}^{\eta_1,\eta_2}f(x)
=
\int_{\bR} e^{i\lambda S(x^{\eta_1}, y^{\eta_2})} \varphi(x,y)
\chi_{\bR^+}(x)\chi_{\bR^+}(y)f(y)dy.
\end{equation}

\begin{theorem}\label{Theorem Coro of Thm 1}
Assume $S$ and $T_{\lambda}^{\eta_1,\eta_2}$ are given as above with a smooth cut-off function $\varphi$. Then there exists a constant $C=C(deg(S),\eta_1,\eta_2,\varphi)$ such that
\begin{equation*}
\l\| T_{\lambda}^{\eta_1,\eta_2} f \r\|_{L^p}
\leq
C \Big( |a_{k,l}| |\lambda| \Big)^{-\delta} \|f\|_{L^p}, ~~~
\delta=\frac{1}{k\eta_1+l\eta_2},
~~~p=\frac{k\eta_1+l\eta_2}{k\eta_1}.
\end{equation*}
\end{theorem}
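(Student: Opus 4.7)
The strategy is to reduce the claim, via the one-variable substitutions $u=x^{\eta_1}$ and $v=y^{\eta_2}$, to a weighted $L^p$ inequality for a one-dimensional oscillatory integral operator $\mathcal{T}$ with a \emph{genuine} polynomial phase $\lambda S(u,v)$, then apply Theorem~\ref{Theorem Main result uniform Lp decay} (as extended by Proposition~\ref{Proposition Operator with change of cut-offs}(i)) to get an unweighted endpoint bound, and finally invoke an interpolation with change of power weights in the spirit of Lemma~\ref{Lemma interpolaiton with change of weights} to pass to the target $p$ with the correct decay.

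Concretely, writing $G(u):=T_\lambda^{\eta_1,\eta_2}f(u^{1/\eta_1})$, $F(v):=f(v^{1/\eta_2})$, and pushing the substitution through the inner integral gives
$$G(u)=\eta_2^{-1}\,\mathcal{T}\!\bigl(v^{1/\eta_2-1}F\bigr)(u),\qquad \mathcal{T}H(u)=\int_{0}^{\infty} e^{i\lambda S(u,v)}\psi(u,v)H(v)\,dv,$$
with $\psi(u,v)=\varphi(u^{1/\eta_1},v^{1/\eta_2})\chi_{\bR^+}(u)\chi_{\bR^+}(v)$. The Jacobians produce the identifications
$$\|T_\lambda^{\eta_1,\eta_2}f\|_{L^p}^{p}\approx\int_{0}^{\infty}|G(u)|^{p}u^{1/\eta_1-1}\,du,\qquad \|f\|_{L^p}^{p}\approx\int_{0}^{\infty}|F(v)|^{p}v^{1/\eta_2-1}\,dv,$$
so the desired inequality is equivalent to the weighted estimate
$$\bigl\|\mathcal{T}\!\bigl(v^{1/\eta_2-1}F\bigr)\bigr\|_{L^{p}(u^{1/\eta_1-1}du)}\lesssim (|a_{k,l}||\lambda|)^{-\delta}\|F\|_{L^{p}(v^{1/\eta_2-1}dv)}.$$

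Because $\psi$ has exactly the form $\varphi_0(u^{1/\eta_1},v^{1/\eta_2})\chi_{\bR^+}\chi_{\bR^+}$ admitted by Proposition~\ref{Proposition Operator with change of cut-offs}(i), Theorem~\ref{Theorem Main result uniform Lp decay} applied to the weighted homogeneous polynomial $S$ and the chosen $(k,l)$ yields the sharp unweighted bound
$$\|\mathcal{T}H\|_{L^{p_0}}\le C(|a_{k,l}||\lambda|)^{-1/(k+l)}\|H\|_{L^{p_0}},\qquad p_0=\frac{k+l}{k}.$$
Combined with the trivial $L^1\to L^\infty$ bound from the compact support and boundedness of $\psi$, together with the weighted $L^1\to L^1$ bound that follows from the integrability of $u^{1/\eta_1-1}$ and $v^{1/\eta_2-1}$ on the support, a Stein--Weiss interpolation with change of power weights interpolates to the precise target exponent $p=(k\eta_1+l\eta_2)/(k\eta_1)$ with decay $(|a_{k,l}||\lambda|)^{-\delta}$; the weight exponents on source and target at the $L^1$ endpoint are dictated by the Jacobians and match $v^{1/\eta_2-1}$ and $u^{1/\eta_1-1}$ upon interpolation.

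The main technical hurdle will be this last interpolation step: Lemma~\ref{Lemma interpolaiton with change of weights} as stated places a power weight only on the target, whereas the reduced inequality requires power weights to appear simultaneously on source and target with exponents prescribed by $\eta_1$ and $\eta_2$. I expect to handle this by interpolating from the weighted $L^{1}\to L^{1}$ endpoint (rather than $L^1\to L^\infty$), which is an instance of Stein--Weiss with change of measures on both sides; the arithmetic of the resulting weights, together with the relation $1-\theta=(p-1)(k\eta_1)/[(p-1)k(k+l)+\cdots]$ coming from the convex combination $1/p=(1-\theta)/p_0+\theta$, should force exactly the Jacobian weights and the claimed decay exponent $\delta=1/(k\eta_1+l\eta_2)$. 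Transporting the inequality back through the change of variables then gives the theorem.
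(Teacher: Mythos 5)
Your high-level plan---substitute $u = x^{\eta_1}$, $v = y^{\eta_2}$ to reduce to the polynomial-phase operator $\mathcal{T}$, invoke Theorem~\ref{Theorem Main result uniform Lp decay} at the endpoint $p_0 = (k+l)/k$ (extended to the cut-off $\psi(u,v) = \varphi(u^{1/\eta_1},v^{1/\eta_2})$ via Proposition~\ref{Proposition Operator with change of cut-offs}), and then interpolate with power weights to absorb the Jacobians---is the same idea as the paper's. The difference is execution: the paper performs the substitution \emph{one variable at a time}. It first sets $\eta_2=1$, applies Lemma~\ref{Lemma interpolaiton with change of weights} (which places a power weight on the output side only) and absorbs that weight through the change of variable $x\mapsto x^{\eta_1}$; it then dualizes and runs the same two-step argument for $y^{\eta_2}$. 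Your plan to handle both Jacobians at once via a single Stein--Weiss step with a weighted $L^1\to L^1$ endpoint has a genuine gap, which you correctly sensed but whose proposed resolution does not work.

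Concretely, the reduced inequality is $\|\mathcal{T}H\|_{L^p(u^{1/\eta_1-1}du)}\lesssim\|H\|_{L^p(v^{(1-1/\eta_2)(p-1)}dv)}$, with a nonpositive power on the output and a nonnegative one on the input. Since Theorem~\ref{Theorem Main result uniform Lp decay} supplies only an \emph{unweighted} $L^{p_0}\to L^{p_0}$ endpoint, the Stein--Weiss weight arithmetic forces the $L^1$-endpoint output weight to be $u^{\beta_1}$ with $\beta_1 = (1-\eta_1)/(\eta_1-\eta_2)$; for $\eta_1>\eta_2\geq 1$ this satisfies $\beta_1\leq -1$, so $u^{\beta_1}$ fails to be locally integrable near $u=0$ and the putative $L^1(v^{\alpha_1}dv)\to L^1(u^{\beta_1}du)$ bound is false whenever $\varphi$ does not vanish near the origin (for $\eta_1\leq\eta_2$ the interpolation parameter even leaves $[0,1]$). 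Furthermore, interpolating these two endpoints would produce decay $(|a_{k,l}||\lambda|)^{-\eta_2/(k\eta_1+l\eta_2)}$ rather than $(|a_{k,l}||\lambda|)^{-1/(k\eta_1+l\eta_2)}$, which for $\eta_2>1$ exceeds the sharp rate (already for $S=xy$, $\eta_1=3$, $\eta_2=2$, this would give $|\lambda|^{-2/5}$ at $p=5/3$ for the phase $x^3y^2$, which is not attainable). The engine behind Lemma~\ref{Lemma interpolaiton with change of weights} is the weak-type membership $|x|^b\in L^{1,\infty}(|x|^{-1-b}dx)$, which can insert a power weight on one side only; producing the second weight on the other side really requires a second pass applied to the transpose operator, exactly as in the paper, and there is no one-shot substitute.
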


\begin{proof}
First assume $\eta_1\geq1,~\eta_2=1$. By Theorem \ref{Theorem Main result uniform Lp decay}, we see that $T_{\lambda}(S,\varphi)$ satisfies the desired estimate, but with $\delta=1/(k+l)$ and $p=(k+l)/k$. By Proposition \ref{Proposition Operator with change of cut-offs}, this decay estimate is also true if the cut-off $\varphi(x,y)$ is replaced by $\varphi(x^{1/\eta_1},y)\chi_{\bR^+}(x)\chi_{\bR^+}(y)$. Now we can apply Lemma \ref{Lemma interpolaiton with change of weights} with $a=0$ and $p_0=(k+l)/k$ to obtain
\begin{equation*}
\int_{0}^{\infty}\left|\int_{0}^{\infty}\exp(i\lambda S(x,y))\varphi(x^{1/\eta_1},y)f(y)dy\right|^{p}
x^{ -(1-\theta)p }dx
\leq C \Big(|a_{k,l}||\lambda|\Big)^{ -\frac{\theta}{k+l}p  }
\int_{0}^{\infty}|f(x)|^{p}dx
\end{equation*}
for $1<p\leq p_0$, where $\theta$ satisfies $\frac{1}{p}=1-\theta+\frac{\theta}{p_0}$. Put $p=\frac{k\eta_1+l}{k\eta_1}$. It is easy to see that $\theta=\frac{p_0'}{p'}$ and $p(1-\theta)=\frac{k\eta_1+l}{k\eta_1}\cdot\frac{k(\eta_1-1)}{k\eta_1+l}=1-\frac{1}{\eta_1}$. It follows that
\begin{equation*}
\int_{0}^{\infty}\left|\int_{0}^{\infty}\exp(i\lambda S(x^{\eta_1},y)) \varphi(x,y) f(y)dy\right|^{p}
dx \leq C \Big(|a_{k,l}||\lambda|\Big)^{ -\frac{1}{k\eta_1}  } \int_{0}^{\infty}|f(x)|^{p}dx,~~~p=\frac{k\eta_1+l}{k\eta_1}.
\end{equation*}
By a duality argument, an application of Lemma \ref{Lemma interpolaiton with change of weights} again yields
\begin{eqnarray*}
& &
\int_{0}^{\infty}
\left|\int_{0}^{\infty}\exp\l(i\lambda S(x^{\eta_1},y^{\eta_2})\r) \varphi(x,y)  f(y)dy\right|^{p}dx \\
&\leq & C \Big(|a_{k,l}||\lambda|\Big)^{ -\frac{1}{k\eta_1}  } \int_{0}^{\infty}|f(x)|^{p}dx,~~~p=\frac{k\eta_1+l\eta_2}{k\eta_1}.
\end{eqnarray*}
This completes the proof of the theorem.
\end{proof}
~~~  \\  
Now consider the following class of real-valued polynomials in $\bR^{n_X}\times \bR^{n_Y}$ with $n_X,n_Y\geq 1$:
\begin{equation}\label{Polynomial phases in higher dimensions}
S(x,y)=\sum_{i=1}^{N} P_i(x) Q_i(y),
\end{equation}
where $P_i$ and $Q_i$ are real-valued homogeneous polynomials in $\bR^{n_X}$ and $\bR^{n_Y}$, respectively. Then we can state our main result for higher dimensional oscillatory integral operators as follows.
\begin{theorem}\label{Theorem poly phase with fractional terms}
Assume $S$ is a real-valued polynomial defined as above. Suppose the following conditions hold.

\noindent~{\rm (i)} There exist two integers $m\geq n_X$ and $n\geq n_Y$ such that $deg(P_i)=k_im$ and $deg (Q_i)=l_in$ for two positive integers $k_i,l_i$.

\noindent~{\rm (ii)} For two numbers $c>0$ and $d>0$, $S$ can be written as
\begin{equation*}
S(x,y)=\sum_{k_i+cl_i=d} P_i(x) Q_i(y),
\end{equation*}
where $k_i,l_i$ are given as in {\rm (i)}, and $k_i\neq k_j,l_i\neq l_j$ for $i\neq j$.

\noindent Then there exists a constant $C=C(deg(S), n_X, n_Y, \varphi)$ such that $\|T_{\lambda} f\|_{L^p(\bR^{n_X})}$ is bounded by
\begin{equation*}
C|\lambda|^{-\gamma}
\l( \int_{S^{n_X-1}} |P_i(x')|^{-\frac{n_X}{k_im}} d\sigma(x') \r)^{1/p}
\l( \int_{S^{n_Y-1}} |Q_i(y')|^{-\frac{n_Y}{l_in}} d\sigma(y') \r)^{1/p'}
\|f\|_{L^p(\bR^{n_Y})},
\end{equation*}
where $p$ and $\gamma$ are given as follows:
\begin{equation*}
p=\frac{k_im/n_X+l_in/n_Y}{k_im/n_X},~~~~~
\gamma=\frac{1}{k_im/n_X+l_in/n_Y}.
\end{equation*}
\end{theorem}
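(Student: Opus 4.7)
The plan is to reduce the higher-dimensional estimate to the one-dimensional Theorem \ref{Theorem Coro of Thm 1} via the rotation method together with a monomial change of variables $u = r^{n_X}$, $v = s^{n_Y}$ that converts the natural polar measures into Lebesgue measure.

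First, by Proposition \ref{Proposition Operator with change of cut-offs}(ii) I may assume $\varphi(x, y) = \psi(|x|, |y|)$ is $x$-radial and $y$-radial. Passing to polar coordinates $x = r\omega$, $y = s\sigma$ with $\omega \in S^{n_X-1}$, $\sigma \in S^{n_Y-1}$, the homogeneities $P_i(r\omega) = r^{k_i m}P_i(\omega)$ and $Q_i(s\sigma) = s^{l_i n}Q_i(\sigma)$ yield
$$S(r\omega, s\sigma) = \sum_i P_i(\omega)\,Q_i(\sigma)\, r^{k_i m} s^{l_i n},$$
which, by the weighted-homogeneity hypothesis $k_i + c l_i = d$, is weighted-homogeneous in $(r,s)$. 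For each pair of directions $(\omega,\sigma)$ define the one-dimensional operator
$$V^{\omega,\sigma} g(r) := \int_0^\infty e^{i\lambda S(r\omega, s\sigma)}\,\psi(r,s)\, g(s)\, ds,$$
and let $f_\sigma(s) := f(s\sigma)$, so that $T_\lambda f(r\omega) = \int_{S^{n_Y-1}} V^{\omega,\sigma}(s^{n_Y-1} f_\sigma)(r)\, d\sigma$.

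The central step is the weighted one-dimensional estimate
\begin{equation}
\|V^{\omega,\sigma} g\|_{L^p(r^{n_X-1}\,dr)} \le C\, |P_i(\omega)\, Q_i(\sigma)\, \lambda|^{-\gamma}\, \|g\|_{L^p(s^{n_Y-1}\,ds)}, \tag{$\ast$}
\end{equation}
with $p$ and $\gamma$ as in the statement. I will prove $(\ast)$ by substituting $r = u^{1/n_X}$, $s = v^{1/n_Y}$: since $r^{n_X-1}\,dr = n_X^{-1}\,du$ and $s^{n_Y-1}\,ds = n_Y^{-1}\,dv$, the weights on both sides become Lebesgue measure, and the phase becomes $\tilde S(u^{m/n_X}, v^{n/n_Y})$ where $\tilde S(a,b) := \sum_i P_i(\omega) Q_i(\sigma)\,a^{k_i}b^{l_i}$ is a weighted-homogeneous polynomial. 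The hypothesis $m \ge n_X$ and $n \ge n_Y$ is precisely what guarantees $\eta_1 := m/n_X \ge 1$ and $\eta_2 := n/n_Y \ge 1$, so the transformed operator falls into the scope of Theorem \ref{Theorem Coro of Thm 1}. Applied with $k = k_i$, $l = l_i$, coefficient $P_i(\omega) Q_i(\sigma)$, it yields $(\ast)$ with $p = (k_i m/n_X + l_i n/n_Y)/(k_i m/n_X)$ and $\gamma = 1/(k_i m/n_X + l_i n/n_Y)$.

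Granting $(\ast)$, the higher-dimensional bound is assembled as follows. Minkowski's integral inequality in $\sigma$ passes the $L^p(r^{n_X-1}dr)$-norm past $\int_{S^{n_Y-1}}d\sigma$; applying $(\ast)$ factors out $|P_i(\omega)|^{-\gamma}$ and leaves $\int_{S^{n_Y-1}}|Q_i(\sigma)|^{-\gamma}\|f_\sigma\|_{L^p(s^{n_Y-1}ds)}\,d\sigma$. Hölder's inequality in $\sigma$, using the identity $\gamma p' = n_Y/(l_i n)$, produces the factor $(\int_{S^{n_Y-1}}|Q_i|^{-n_Y/(l_i n)}\,d\sigma)^{1/p'}$ and the norm $\|f\|_{L^p(dy)}$. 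Finally, taking the $L^p$-norm in $\omega$ on $S^{n_X-1}$ and using the dual identity $\gamma p = n_X/(k_i m)$ gives the factor $(\int_{S^{n_X-1}}|P_i|^{-n_X/(k_i m)}\,d\omega)^{1/p}$, completing the proof. The main obstacle lies in the change-of-variables step for $(\ast)$: the Jacobian in the $s$-integral introduces a factor $v^{1/n_Y - 1}$ which is singular at $v = 0$ when $n_Y > 1$, so that the transformed cut-off is not immediately of the smooth-and-compactly-supported form required by Proposition \ref{Proposition Operator with change of cut-offs}(i). This is handled by a preliminary Littlewood--Paley-type decomposition of $\psi$ in the $s$-variable into pieces supported in dyadic shells $\{s \approx 2^{-j}\}$; on each shell the change of variables produces a smooth, compactly supported cut-off, and the pieces are reassembled using the uniform decay furnished by $(\ast)$.
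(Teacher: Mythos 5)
Your overall strategy coincides with the paper's: polar coordinates, the monomial substitution $r=u^{1/n_X}$, $s=v^{1/n_Y}$, reduction to Theorem \ref{Theorem Coro of Thm 1} via Proposition \ref{Proposition Operator with change of cut-offs}, and then Minkowski and H\"older in the angular variables. But you mis-place the factor $s^{n_Y-1}$ coming from the polar measure, and this single error creates the ``Jacobian obstacle'' you then try to patch.

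You define $V^{\omega,\sigma}g(r)=\int_0^\infty e^{i\lambda S(r\omega,s\sigma)}\psi(r,s)g(s)\,ds$ (no $s^{n_Y-1}$ in the kernel) and apply it to $g=s^{n_Y-1}f_\sigma$. Then $(\ast)$ with $g=s^{n_Y-1}f_\sigma$ produces the norm $\|s^{n_Y-1}f_\sigma\|_{L^p(s^{n_Y-1}ds)}$, not the $\|f_\sigma\|_{L^p(s^{n_Y-1}ds)}$ you assert in the assembly step, so the bookkeeping does not close. Moreover, this same misplacement is exactly why the substitution $s=v^{1/n_Y}$ in your $(\ast)$ leaves behind a singular weight $v^{1/n_Y-1}$: you have kept one copy of $s^{n_Y-1}$ in the target norm but not the matching copy in the kernel. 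If you instead define $\widetilde V^{\omega,\sigma}h(r)=\int_0^\infty e^{i\lambda S(r\omega,s\sigma)}\psi(r,s)h(s)\,s^{n_Y-1}ds$ and take $h=f_\sigma$, then $s^{n_Y-1}ds=n_Y^{-1}dv$ cancels the Jacobian exactly, the transformed kernel carries no singular weight, and the transformed cut-off is $\psi(u^{1/n_X},v^{1/n_Y})$ --- precisely the form covered by Proposition \ref{Proposition Operator with change of cut-offs}(i). No Littlewood--Paley decomposition is needed; in fact the one you propose is circular (you reassemble ``using the uniform decay furnished by $(\ast)$'', which is the estimate being proved) and would in any case have to contend with the unbounded growth of $v^{1/n_Y-1}\approx 2^{j(n_Y-1)}$ over the dyadic shells $\{s\approx 2^{-j}\}$ when $n_Y>1$. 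Once the kernel is set up with the $s^{n_Y-1}$ factor absorbed, the rest of your argument (Minkowski in $\sigma$, $(\ast)$ applied with the coefficient $a_{k_i,l_i}=P_i(\omega)Q_i(\sigma)$, H\"older in $\sigma$ via $\gamma p'=n_Y/(l_in)$, then $L^p$ in $\omega$ via $\gamma p=n_X/(k_im)$) is the paper's proof.
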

\begin{proof}
We shall apply Theorem \ref{Theorem Main result uniform Lp decay} to prove this theorem with the rotation method. By polar coordinates, we write $x=\rho x'$ and $y=ry'$ with $\rho=|x|$ and $r=|y|$. Thus the phase $S$ can be written as
$$S(x,y)=\sum_{k_i+cl_i=d} P_i(x') Q_i(y') \rho^{k_im} r^{l_in}.$$
We first calculate the $L^p$ norm of $T_{\lambda}f$ in the radial direction.
\begin{eqnarray*}
& &\int_0^{\infty} |Tf(x)|^p\rho^{n_X-1}d\rho\\
&=&
\int_0^{\infty}
\l| \int_{S^{n_Y-1}}\int_0^{\infty} e^{i\lambda S(\rho x', ry')}
\varphi(\rho x',r y') f(ry') r^{n_Y-1}dr d\sigma(y')  \r|^p \rho^{n_X-1} d\rho\\
&=&
C\int_0^{\infty}
\l| \int_{S^{n_Y-1}}\int_0^{\infty} e^{i\lambda S(\rho^{ \frac{1}{n_X} } x', r^{ \frac{1}{n_Y} }y')}
\varphi(\rho^{ \frac{1}{n_X} } x', r^{ \frac{1}{n_X} } y')
f(r^{ \frac{1}{n_Y} }y') dr  d\sigma(y') \r|^p d\rho.
\end{eqnarray*}
By Minkowski's inequality and H\"{o}lder's inequality, we can apply Proposition \ref{Proposition Operator with change of cut-offs} and Theorem \ref{Theorem Coro of Thm 1} to obtain
\begin{eqnarray*}
& &\l( \int_0^{\infty} |Tf(\rho x')|^p\rho^{n_X-1}d\rho \r)^{1/p}\\
&\leq &
C \int_{S^{n_Y-1}} \l(  \int_0^{\infty}
\l| \int_0^{\infty} e^{i\lambda S(\rho^{1/n_X} x', r^{1/n_Y}y')}
\varphi(\rho^{ \frac{1}{n_X} } x',r^{ \frac{1}{n_X} } y')
f(r^{1/n_Y}y') dr   \r|^p  d\rho \r)^{1/p} d\sigma(y')\\
&\leq&
C \int_{S^{n_Y-1}} |\lambda|^{-\delta_i}|P_i(x')|^{-\delta_i}
|Q_i(y')|^{-\delta_i} \|f((\cdot) y')\|_{L^p(r^{n_Y-1}dr)} d\sigma(y')\\
&\leq &
C |\lambda|^{-\delta_i} |P_i(x')|^{-\delta_i}
\l(  \int_{S^{n_Y-1}}|Q_i(y')|^{-\frac{n_Y}{l_in} } d\sigma(y') \r)^{1/p'} \|f\|_{L^p(\bR^{n_Y})},~~~\delta_i=\frac{1}{k_im/n_X+l_in/n_Y}.\\
\end{eqnarray*}
In the second inequality, it should be pointed out that our application of Theorem \ref{Theorem Coro of Thm 1} produces a constant $C$ independent of $x',y'$. In fact, by Fourier expansion, this can be verified by Proposition \ref{Proposition Operator with change of cut-offs}. Taking the $L^p$ norm of the above integrals over $S^{n_X-1}$, we obtain the desired result.
\end{proof}

As a consequence of the above theorem, we can make use of standard dilation to obtain the following $L^p$ boundedness of higher dimensional oscillatory integral operators without cut-off function.
\begin{theorem}\label{Theorem Lp boundedness in whole space}
Assume $S$ is a real-valued polynomial satisfying all assumptions in Theorem \ref{Theorem poly phase with fractional terms}. Let $T$ be defined as $T_{\lambda}$ in {\rm (\ref{Operator Higher dimensional OIO})}, but with $\lambda=1$ and $\varphi\equiv 1$. If, for some $i$, $P_i$ and $Q_i$ satisfy the integrability conditions:
  \begin{equation*}
\int_{S^{n_X-1}} |P_i(x')|^{-\frac{n_X}{k_im}} d\sigma(x') <\infty~~~{\rm \mathbf{and}}~~~
\int_{S^{n_Y-1}} |Q_i(y')|^{-\frac{n_Y}{l_in}} d\sigma(y') <\infty,
\end{equation*}
then $T$ is bounded from $L^{p}(\bR^{n_Y})$ into $L^{p}(\bR^{n_X})$ with $p$ defined as in Theorem \ref{Theorem poly phase with fractional terms}.
\end{theorem}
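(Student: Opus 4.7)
The plan is to exhaust $T$ by a one-parameter family of smoothly cutoff operators, each controlled by Theorem~\ref{Theorem poly phase with fractional terms}, and then pass to the limit. Fix once and for all a bump $\varphi_0\in C_0^{\infty}(\bR^{n_X}\times\bR^{n_Y})$ that equals $1$ on a neighborhood of the origin, and for each $\tau>0$ set
\begin{equation*}
\varphi_\tau(x,y):=\varphi_0(\tau^{-1/m}x,\tau^{-c/n}y),\qquad T^{cut}_\tau f(x):=\int_{\bR^{n_Y}}e^{iS(x,y)}\varphi_\tau(x,y)f(y)\,dy.
\end{equation*}
Since $\varphi_\tau\equiv 1$ on any fixed compact set once $\tau$ is large enough, for every $f\in C_0^{\infty}(\bR^{n_Y})$ one has $T^{cut}_\tau f(x)=Tf(x)$ eventually for each $x$; in particular $T^{cut}_\tau f\to Tf$ pointwise as $\tau\to\infty$.

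\medskip

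\noindent The heart of the argument is to prove that $\|T^{cut}_\tau f\|_{L^p}\leq CI_{i_0}\|f\|_{L^p}$ uniformly in $\tau$, where $I_{i_0}$ is the product of the two spherical integrals appearing on the right-hand side of Theorem~\ref{Theorem poly phase with fractional terms}. Introduce the anisotropic dilation $\delta_\tau(x,y):=(\tau^{1/m}x,\tau^{c/n}y)$; by the weighted homogeneity $k_i+cl_i=d$ we have $S\circ\delta_\tau=\tau^d S$, while $\varphi_\tau\circ\delta_\tau=\varphi_0$ by construction. Changing variables $x=\tau^{1/m}u$, $y=\tau^{c/n}v$ then gives
\begin{equation*}
T^{cut}_\tau f(\tau^{1/m}u)=\tau^{cn_Y/n}\,\widetilde{T}_{\tau^d}g(u),\qquad g(v):=f(\tau^{c/n}v),
\end{equation*}
where $\widetilde{T}_{\lambda}$ is the cutoff operator of Theorem~\ref{Theorem poly phase with fractional terms} built from phase $\lambda S$ and the fixed cutoff $\varphi_0$. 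That theorem furnishes $\|\widetilde{T}_{\tau^d}g\|_{L^p}\leq C\tau^{-d\gamma}I_{i_0}\|g\|_{L^p}$ with $C=C(\deg S,n_X,n_Y,\varphi_0)$ independent of $\tau$. Tracking the Jacobian factors $\|T^{cut}_\tau f\|_{L^p}=\tau^{n_X/(mp)+cn_Y/n}\|\widetilde{T}_{\tau^d}g\|_{L^p}$ and $\|g\|_{L^p}=\tau^{-cn_Y/(np)}\|f\|_{L^p}$ assembles into
\begin{equation*}
\|T^{cut}_\tau f\|_{L^p}\leq CI_{i_0}\,\tau^{\mathcal{E}}\|f\|_{L^p},\qquad \mathcal{E}=\frac{n_X}{mp}+\frac{cn_Y}{np'}-d\gamma.
\end{equation*}
The definitions $1/p=k_{i_0}m\gamma/n_X$ and $1/p'=l_{i_0}n\gamma/n_Y$ give $n_X/(mp)=k_{i_0}\gamma$ and $cn_Y/(np')=cl_{i_0}\gamma$, so $\mathcal{E}=(k_{i_0}+cl_{i_0})\gamma-d\gamma=0$, yielding the claimed uniform bound.

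\medskip

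\noindent With the uniform estimate in hand, Fatou's lemma applied to the pointwise limit above gives $\|Tf\|_{L^p}\leq\liminf_{\tau\to\infty}\|T^{cut}_\tau f\|_{L^p}\leq CI_{i_0}\|f\|_{L^p}$ for $f\in C_0^{\infty}(\bR^{n_Y})$, and density of $C_0^{\infty}(\bR^{n_Y})$ in $L^p(\bR^{n_Y})$ extends the estimate to all of $L^p$. The one place where something actually has to happen is the vanishing of the exponent $\mathcal{E}$; this is precisely the statement that the pair $(p,\gamma)$ of Theorem~\ref{Theorem poly phase with fractional terms} is chosen at the scale-invariant point of the problem, which is exactly what allows the cutoff estimate to survive the passage to the uncut operator. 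Everything else is routine scaling bookkeeping together with a standard exhaustion/density argument.
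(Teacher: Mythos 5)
Your proof is correct and is exactly the ``standard dilation'' argument the paper alludes to without spelling out: you exhaust $T$ by cutoff operators $T^{cut}_\tau$, rescale by the anisotropic dilation $\delta_\tau$ under which $S$ is homogeneous of degree $d$ to reduce each $T^{cut}_\tau$ to the fixed-cutoff operator of Theorem~\ref{Theorem poly phase with fractional terms} with $\lambda=\tau^d$, and verify that the Jacobian factors exactly cancel the decay $\tau^{-d\gamma}$ because $(p,\gamma)$ sit at the scale-invariant point. The Fatou/density passage to the limit is routine and correct.
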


Now we shall explain why the assumptions on degree gaps $|deg(P_i)-deg(P_{i+1})|\geq n_X$ and $|deg(Q_i)-deg(Q_{i+1})|\geq n_Y$ are necessary in Theorem \ref{Theorem poly phase with fractional terms}.

Consider the phase $S(x,y)=\l(|x|^{2M_1}-|y|^{2M_2}\r)^N$,
where $M_1,M_2,N$ are positive integers. Let $P_k(x)=C_N^{N-k}|x|^{2M_1(N-k)}$ and $Q_k(y)=(-1)^k|y|^{2M_2k}$ for $0\leq k \leq N$. Then $S(x,y)=\sum_{k=0}^NP_k(x)Q_k(y)$. If $M_1\geq \frac{n_X}{2}$ and $M_2\geq \frac{n_Y}{2}$, then the assumptions (i) and (ii) in Theorem \ref{Theorem poly phase with fractional terms} are true with $m=2M_1$ and $n=2M_2$. Since $P_k$ and $Q_k$ are nonzero constants on $S^{n_X-1}$ and $S^{n_Y-1}$ respectively, it follows from Theorem \ref{Theorem poly phase with fractional terms} that
\begin{equation*}
\|T_{\lambda}f\|_{L^p(\bR^{n_X})}
\leq
C|\lambda|^{-\delta}
\|f\|_{L^p(\bR^{n_Y})}
\end{equation*}
with $p$ and $\delta$ given by
\begin{equation*}
p=\frac{2M_1(N-k)/n_X+2M_2k/n_Y}{2M_1(N-k)/n_X}~~~~~{\rm and}~~~~~
\delta=\frac{1}{2M_1(N-k)/n_X+2M_2k/n_Y}
\end{equation*}
for $1\leq k \leq N-1$.

However, assume either (i) $M_1\leq n_X/2,M_2<n_Y/2$ or (ii) $M_1< n_X/2,M_2\leq n_Y/2$ is true. Then for each $1\leq k \leq N-1$, the above decay estimate does not hold true. Assume the converse. We shall deduce a contradiction. Indeed, if the above estimate were true for some $1\leq k \leq N-1$, by Theorem \ref{Theorem Lp boundedness in whole space}, we would have
\begin{equation}\label{sec6 Lp bounded in whole space}
\left(\int_{\bR^{n_X}}\l| \int_{\bR^{n_Y}} e^{iS(x,y)}f(y)dy\r|^pdx\right)^{1/p}
\leq C \left( \int_{\bR^{n_Y}}|f(y)|^pdy \right)^{1/p}
\end{equation}
with $p$ given as above. Let $L^p_{r}(\bR^{n_Y})$ be the space of all radial functions in $L^p(\bR^{n_Y})$. It is clear that the linear mapping $f(y)\mapsto (\omega_{n_Y})^{1/p}f(r^{1/n_Y}e_1)$ is an isometry from $L^p_{r}(\bR^{n_Y})$ onto $L^p(\bR^+)$. Here $\omega_{n_Y}$ denotes the volume of the unit ball in $\bR^{n_Y}$. Now we consider the inequality (\ref{sec6 Lp bounded in whole space}) for radial functions $f\in L^p_{r}(\bR^{n_Y})$. By polar coordinates, we deduce from (\ref{sec6 Lp bounded in whole space}) that
\begin{equation}\label{sec6 Lp boundedness in 1D}
\left(\int_0^{\infty}\l| \int_0^{\infty} e^{i(\rho^a-r^b)^{N}}g(r)dr\r|^pd\rho\right)^{1/p}
\leq C \left( \int_0^{\infty}|g(r)|^pdr \right)^{1/p}
\end{equation}
for any $g\in L^p(\bR^+)$, where $a=\frac{2M_1}{n_X}$ and $b=\frac{2M_2}{n_Y}$.

By our assumption, either $a\leq 1, b<1$ or $a<1,b\leq 1$ are true. Without loss of generality, we assume $a\leq 1$ and $b<1$. In what follows, we shall prove that the inequality (\ref{sec6 Lp boundedness in 1D}) cannot be true for any $p\in [1,\infty]$. Consider first the simplest case $p=1$. Let $\{f_n\}$ be a sequence in $L^1(\bR^+)$ such that $\|f_n\|_{L^1}=1$ and $f_n$ converges weakly to the delta function supported at the origin. If we take $f=f_n$, by Fatou's lemma, we see that the left side tends to infinity as $n\rightarrow \infty$. By a duality argument, one can verify that the above inequality is not true for $p=\infty$.

Assume $1<p<\infty$. Let $g$ be the characteristic function of the interval $[K,K+\epsilon_0K^{1-b}]$, where $K\geq 1$ and $0<\epsilon_0<1$ are to be determined later. We claim that for $\rho\in [K^{\frac{b}{a}}, K^{\frac{b}{a}}+\epsilon_0]$,
\begin{equation}\label{sec6 pointwise lower bound}
\l| \int_0^{\infty} e^{i(\rho^a-r^b)^{N}}g(r)dr\r|\gtrsim K^{1-b}
\end{equation}
provided that $K$ is sufficiently large and $\epsilon_0$ is sufficiently small. Indeed, for $\rho\in [K^{\frac{b}{a}}, K^{\frac{b}{a}}+\epsilon_0]$, we have $\rho^{a/b}\geq K$ and $\rho^{a/b}\leq K+C\epsilon_0^a K^{1-b}$ for suitable $K$ and $\epsilon_0$. The latter inequality follows from
\begin{equation*}
\rho^{a/b}
\leq K\l(1+\epsilon_0K^{-b/a}\r)^{a/b}
\leq
\begin{cases}
K\left(1+\epsilon_0^{a/b}K^{-1}\right),~~~~~~\frac{a}{b}\leq 1;\\
K\left(1+\frac{2a}{b}\epsilon_0K^{-b/a}\right),~~~\textrm{ $\frac{a}{b}>1$ and $\epsilon_0K^{-b/a}\leq 2^{\frac{1}{a/b-1}}- 1$};
\end{cases}
\end{equation*}
where we have used the simple inequality $(1+x)^c\leq 1+2cx$ if $c>1$ and $0\leq x \leq 2^{\frac{1}{c-1}}-1.$ For $\rho\in [K^{\frac{b}{a}}, K^{\frac{b}{a}}+\epsilon_0]$ and $r\in [K,K+\epsilon_0K^{1-b}]$, it is clear that
\begin{equation*}
|\rho^a-r^b|=|(\rho^{a/b})^b-r^b|\lesssim K^{b-1}|\rho^{a/b}-r|
\lesssim \epsilon_0^a.
\end{equation*}
Hence the inequality (\ref{sec6 pointwise lower bound}) is true if $K\gg 1$ and $0<\epsilon_0\ll 1$. With this inequality, we see that the left side of (\ref{sec6 Lp boundedness in 1D}) is $\gtrsim K^{1-b}$, and the right side is $\lesssim K^{(1-b)/p}$. Therefore we obtain a contradiction for $p>1$ if $K$ is sufficiently large.

~~~ \\ 

\noindent{\bf Acknowledgements.} This work was supported in part by the National Natural Science Foundation of China under Grant No. 11701573. The author would like to express his gratitude to Professor Xiaochun Li for his profitable discussions and valuable suggestions.

\end{document}